\theoremstyle{Theorem}
\newtheorem{theo}{Theorem}
\theoremstyle{Lemma}
\newtheorem{lem}[theo]{Lemma}
\newtheorem{prop}[theo]{Proposition}
\newtheorem{rem}[theo]{Remark}
\newtheorem{assumption}[theo]{Assumption}
\theoremstyle{Definition}
\newtheorem{defi}[theo]{Definition}
\theoremstyle{Corollary}
\newtheorem{cor}[theo]{Corollary}
\newcommand{ \FP }{ \xi } 
\newcommand{ \SM }{ \gamma } 
\DeclareFontFamily{U}{matha}{\hyphenchar\font45}
\DeclareFontShape{U}{matha}{m}{n}{
      <5> <6> <7> <8> <9> <10> gen * matha
      <10.95> matha10 <12> <14.4> <17.28> <20.74> <24.88> matha12
      }{}
\DeclareSymbolFont{matha}{U}{matha}{m}{n}
\DeclareFontFamily{U}{mathx}{\hyphenchar\font45}
\DeclareFontShape{U}{mathx}{m}{n}{
      <5> <6> <7> <8> <9> <10>
      <10.95> <12> <14.4> <17.28> <20.74> <24.88>
      mathx10
      }{}
\DeclareSymbolFont{mathx}{U}{mathx}{m}{n}
\DeclareMathDelimiter{\vvvert}{0}{matha}{"7E}{mathx}{"17}
\newcommand{\E}{{\mathbb E}}
\newcommand{\PP}{{\mathbb P}}
\newcommand{\N}{{\mathbb N}}
\newcommand{\Z}{{\mathbb Z}}
\newcommand{\R}{{\mathbb R}}
\newcommand{\Tr}{\textnormal{Tr}}
\newcommand{\SL}{S^{(1)}}
\newcommand{\SLL}{S^{(2)}}
\newcommand{\HH}{{\mathcal H}}
\newcommand{\be}{\begin{equation}} 
\newcommand{\ee}{\end{equation}}
\numberwithin{theo}{section}
\newcommand{\TK}{\textcolor{red}}
\newcommand{\change}[1]{{\color{magenta}{{#1}}}}
\begin{document}

\title{Statistical inference for the slope parameter in  functional linear regression}

\author{Tim Kutta, Gauthier Dierickx, Holger Dette }

\maketitle

\begin{abstract}
In this paper we consider the linear regression model $Y =S X+\varepsilon $ with functional  regressors and responses. We develop new inference tools to quantify deviations of the true slope $S$ from a hypothesized operator $S_0$ with respect to the Hilbert--Schmidt norm $\vvvert S- S_0\vvvert^2$, as well as the prediction error $\E \| S X - S_0 X \|^2$. 
Our analysis is applicable to functional time series and based on asymptotically pivotal statistics. This makes it particularly user friendly, because it avoids the choice of tuning parameters inherent in long-run variance estimation or bootstrap of  dependent data. We also discuss  two sample problems as well as change point detection. Finite sample properties are investigated by means of a simulation study.\\
Mathematically our approach is based on a sequential version of the popular spectral cut-off estimator $\hat S_N$ for $S$. It is well-known that the $L^2$-minimax rates in the functional regression model, both in estimation and prediction, are substantially slower than $1/\sqrt{N}$ (where $N$ denotes the sample size) and that standard estimators for $S$ do not converge weakly to non-degenerate limits. 
However, we demonstrate that simple plug-in estimators - such as $\vvvert \hat S_N - S_0 \vvvert^2$ for  $\vvvert S - S_0 \vvvert^2$ - are $\sqrt{N}$-consistent and its sequential versions satisfy weak invariance principles. These results are based on the smoothing effect of $L^2$-norms and established by a new proof-technique, the {\it smoothness shift}, which has potential applications in other statistical inverse problems. 
\end{abstract}

\noindent {\em Keywords:}
functional linear regression, inverse problem, relevant tests,  spectral cut-off, prediction error
\noindent {\em AMS subject classification:} 62R10, 62M20
\bigskip

%
%
%
%
%
%
%
%
%

\section{Introduction}
\label{sec1}

%
%
%
%
%
%

Over the past decades new branches of statistical research have 
developed to meet the needs of an economy with  growing data volumes at its disposal. One approach to analyze large data samples, particularly when detected on a dense grid, is interpolation of discrete measurements to continuous, functional observations.
 This method is known as functional data analysis (FDA) and nowadays has numerous  applications as diverse as economics, climatology and medicine \citep[see, for example,][among many others]{anderson2010, bonner2014, sorensen2013}. FDA benefits  users in several ways: From a theoretical perspective functional models - in contrast to standard multivariate analysis - can  incorporate smoothness in the data. From a computational viewpoint, the interpolation of thousands of observations to, say a yearly curve of price development, can drastically reduce the amount of data to be stored, since interpolations only consume a fraction of memory compared to the noisy raw data \citep[see, for example,][among many others]{liebl2013,stoehr2020}. 
From a practical point of view, random curves are easy to visualize and interpret for human users, who cannot possibly make sense from endless data lists.\\
One model that  has attracted particular interest in the context of FDA due to its parsimony and interpretability is the functional linear regression model
\begin{equation} \label{model_1}
        Y_n 
    = 
        S X_n + \varepsilon_n \qquad  n=1,...,N. 
\end{equation}
Here the  regressors, errors and dependent variables are functions.  
More mathematically, $X_n, \varepsilon_n$  and  $Y_n$ are elements of  
(potentially different) Hilbert spaces $H_1$ and $H_2$ and the slope parameter $S: H_1 \to H_2$ is a Hilbert--Schmidt operator. Such models extend existing ones for time series and  panel data, and have applications in different situations, where standard, non-functional approaches fail   \citep[see, for example,][]{anderson2010}. \\
Linear models are attractive for users because of their simple structure, where all information is stored in the slope parameter $S$. However, compared to the better known case of finite dimensional, linear regression, the standard tasks of estimation, prediction and statistical inference become substantially more difficult in the functional regime. Indeed all of these tasks require the  approximate inversion of the compact covariance operator $\Gamma := \mathbb{E}X \otimes X$ (we discuss this issue in detail in Section \ref{Subsection_linear_model}), which constitutes a {\it statistical inverse problem}. 
Statistical inverse problems extend {\it classical inverse problem}  \citep[the reconstruction of an entity using indirect observations, see fore example][]{Englbook,Helgasonbook}
by including noise in the model.
Naturally arising in different settings, statistical inverse problems have been studied extensively in tomography, deconvolution or the heat equation, to name but a few examples \citep[see][and the  references therein]{cavalier2008,bissantz2008}.  
Since our subsequent discussion is exclusively concerned with statistical inverse problems, we will for brevity just refer to them as inverse problems without qualifications. Characteristic of inverse problems is a need for regularization, which leads to slower than parametric convergence rates of the resulting   estimates.

The study of functional linear regression and the associated inverse problem has been a part of FDA for more than two decades   \citep[see, for example the monograph of][]{Ramsaybook}. Early work has focused on the scalar response model
\begin{equation} \label{functions_on_regressors}
        Y_n
    = 
        \int_0^1 \varphi_S(t) X_n(t) dt + \varepsilon_n~,
\end{equation}
which is a special case of \eqref{model_1}, where $H_1=L^2[0,1]$, the space of square integrable function defined on the interval $[0,1]$, $H_2= \R$  and
$S: L^2[0,1] \to \R$ is an integral operator with square integrable kernel $ \varphi_S$. 
For the investigation of scalar response models,  we refer the interested reader for instance to \cite{cardot2003}, \cite{hall2007}, \cite{yuan2012} and references therein. 
In \cite{hall2007} convergence rates for spectral cut-off estimators (a specific kind of regularization also used in this paper) are investigated with respect to the $L^2$-norm and it is shown that these estimators can achieve minimax optimal rates. Generalizations of these results to functional linear regression with functional responses can be found  in \cite{masaaki2017}. 
Similarly,  \cite{benatia2017}  investigate minimax $L^2$-rates, as well as practical aspects of Tikhonov regularizations in the estimation of the slope parameter. 
Besides $L^2$-rates other aspects of model \eqref{model_1} have been investigated in a wide variety of works, such as consistency under weak dependence in a white noise model  \citep[see][]{hoermann2012}, aspects of identifiability \citep[see][]{scheipl2016}, minimax rates for  
prediction \citep[see][]{crambes2013} and robust estimation \citep[see][]{shin2016}. \\
The list of  cited references is by no means complete and only comprises a fraction of the larger body of research in this domain. 
Besides estimation and prediction, hypothesis testing in 
the functional regression model has attracted some  attention.
\cite{cardot2003, cardot2004} consider the problem of testing for a particular value of the slope,  
i.e.  $H_0:S=S_0$ vs. $H_1: S \neq S_0$, where $S_0$ is some hypothesized operator (see Section \ref{Subsection_relevant_hypothesis} for details and more references). 
It turns out that $H_0$ can be examined by $\sqrt{N}$-consistent tests, which employ transformed versions of both operators $S$ and $S_0$. Importantly such tests do not have to solve the inverse problem of reconstructing $S$, which makes them theoretically more parsimonious, but practically somewhat difficult to interpret, as they do not assess the deviation of the true operators of interest (for a more detailed  discussion of this problem we refer to  Section \ref{Subsection_relevant_hypothesis} below). As a consequence attention has recently shifted to inference methods, based on direct slope comparisons, to make the results statistically more meaningful.  For example, in the functional linear model \eqref{functions_on_regressors} with  scalar responses \cite{kato2019}  develop confidence bands  that  cover the slope function at most points with a prespecified probability. Other authors use Gaussian approximations to construct uniform confidence bands; see, for example, \cite{babii2020}, who 
devises honest confidence bands 
for the regression function in a
nonparametric instrumental variable regression using Tikhonov regularization.
Notice that these 
approaches, based on reconstructing $S$ (or $\varphi_S$), have to pay the price of solving the inverse problem, by a convergence speed significantly slower than $1/\sqrt{N}$. \\
In this paper, we contribute to the discussion by providing a new method of statistical inference in the regression model \eqref{model_1}. Our inference concerns the two deviation measures  $\vvvert S-S_0 \vvvert^2$ (the distance in Hilbert--Schmidt norm) and  $\E\| SX-S_0X \|^2$ (the expected prediction error), where again $S_0$ is a hypothesized operator. In contrast to the hypothesis of the form $H_0: S=S_0$, we prefer a more quantitative approach, testing whether the deviation $\vvvert S-S_0 \vvvert^2$ or $\E\| SX-S_0X \|^2$ is smaller than some predetermined threshold, say  $\Delta >0$.
Although one has to solve the inverse problem to reconstruct $S$, the proposed estimates  converge at a parametric rate of $1/\sqrt{N}$, due to a natural smoothing effect of the $L^2$-norms. 
In particular, we use a new proof-technique, called  {\it smoothness shift},  to
establish asymptotic normality for estimators of the deviation measures $\vvvert  S-S_0 \vvvert^2$  and $\E\|  SX-S_0X \|^2$. This technique can be also used in the study of other inverse problems, such as deconvolution or tomography and is therefore of independent interest. 

 A direct application of these results for statistical inference such as the construction of hypothesis tests or confidence intervals is theoretically possible, but practically difficult, because it requires the estimation of asymptotic (long-run) variances. This estimation is intricate in inverse problems even for i.i.d.\ data and becomes even more difficult for functional time series (see Proposition~\ref{proposition_1} below for a presentation of the long-run variance $\tau^2$). To circumvent these problems  we   investigate sequential versions of our estimators, prove weak invariance principles and use the concept of self-normalization 
 \citep[see, for example,][]{shao2015,DetKokVol20} to 
construct (asymptotically) pivotal estimates of the deviation measures. Users benefit from the principle of self-normalization, because it provides (robust) inference tools, which
do not require the choice of tuning parameters for long-run variance estimation (see, e.g. \cite{Horvath2011} and \cite{Kokoszka2012})
or for the block bootstrap  of dependent data  \citep[see, e.g.,][]{poliroma1994,bucher2013}.\\
The rest of this paper is organized as follows: In Section \ref{sec2}, we discuss the linear model in detail and construct the spectral cut-off estimator $\hat S_N$ for $S$. Next, in Section~\ref{sec3}, we present statistical inference for the distance in Hilbert--Schmidt norm and in Section~\ref{sec4} inference for the expected prediction error. Then, in Section~\ref{sec5} we propose extensions
of our methodology  to two sample and change point scenarios, while in Section~\ref{sec6} we investigate
finite sample properties by virtue of a simulation study. Finally,  the Appendix contains the technical proofs and mathematical details.

\section{Estimation of the slope parameter} 
\label{sec2} 

In this Section we  introduce the mathematical set-up for estimation in the functional  linear
regression model \eqref{model_1}. We begin by  recalling some basic  facts about Hilbert--Schmidt operators and  continue 
with a discussion of the estimation problem of the slope $S$  in the functional linear model. In particular,  we explain the necessity of regularization   and 
discuss the ensuing variance-regularization trade-off. 

\subsection{Operators on Hilbert spaces} \label{Subsection_Operators}

Throughout this paper we treat functional observations as elements of Hilbert spaces. Thus before we proceed to the statistics, we recall some fundamental aspects of operator theory on Hilbert spaces. For a more detailed overview we recommend the monographs of  \cite{HorKokBook12} (with particular emphasis on functional data) as well as \cite{WeidmannBook80}.  \\
Suppose two generic Hilbert spaces $(\mathcal{H}_1, \langle, \rangle_1)$ and $(\mathcal{H}_2, \langle, \rangle_2)$ are given. The corresponding norms on the spaces are denoted by $\|\cdot\|_i$, for $i=1,2$.  The space $\mathcal{L}(\mathcal{H}_1, \mathcal{H}_2)$ consists of all linear operators $L:\mathcal{H}_1 \to \mathcal{H}_2$, satisfying 
$$
	    \vvvert L\vvvert_\mathcal{L}
	:= 
	    \sup_{\|x\|_1 = 1} \|L x\|_2
	<
	    \infty.
$$
The norm $\vvvert \cdot \vvvert_\mathcal{L}$ is referred to as operator or spectral norm. Recall that all operators with bounded spectral norm are also continuous. An important subclass of $\mathcal{L}(\mathcal{H}_1, \mathcal{H}_2)$ is given by the compact operators, i.e., such operators $L$ which map the unit ball in $\mathcal{H}_1$ to a totally bounded set in $\mathcal{H}_2$.
In the special case where $\mathcal{H}_1=\mathcal{H}_2$ and the operator $L$ is both compact and symmetric, $L$ can - according to the spectral theorem for normal operators - be diagonalized, in the sense that for any $x \in \mathcal{H}_1$
\begin{equation} \label{spectral_decomposition}
 	    L x 
 	=  
 	    \sum_{n \in \N } \eta_n 
 	    \langle f_n, x \rangle_1 f_n,   
\end{equation}	
where $\eta_1, \eta_2,... \in \R$ are the eigenvalues and $f_1, f_2,... \in \mathcal{H}_1$ the corresponding eigenvectors of $L$. In the context of functional spaces the eigenvectors are usually referred to as eigenfunctions. The most restrictive class of operators, that we consider in this paper consists of the {Hilbert--Schmidt operators}. This subspace of $\mathcal{L}(\mathcal{H}_1, \mathcal{H}_2)$ denoted by $\mathcal{S}(\mathcal{H}_1, \mathcal{H}_2)$ consists of all linear operators $L$, which satisfy
$$
	\vvvert L \vvvert := \sum_{n \in \N} \|L f_n \|_2^2 <\infty,
$$
where $\{f_n\}_{n \in \N}$ is some orthonormal basis (ONB) of $\mathcal{H}_1$. The value of the norm is independent of the basis. Notice that $\vvvert L \vvvert< \infty$ directly entails compactness and hence boundedness w.r.t.\ the operator norm. The norm $\vvvert \cdot \vvvert$ is called {Hilbert--Schmidt norm} and is the infinite-dimensional analogue of the Frobenius norm. Just as the Frobenius norm it is induced by an inner product, which for two operators $L, T \in \mathcal{S}(\mathcal{H}_1, \mathcal{H}_2)$ is given by
$$
	    \langle L, T \rangle 
	:=  
	    \sum_{n \in \N} \langle L f_n, T f_n \rangle_2,
$$
where again the value of the inner product (on the left hand side) does not depend on the choice of basis. Equipped with this inner product the linear space $\mathcal{S}(\mathcal{H}_1, \mathcal{H}_2)$ becomes itself a Hilbert space. 
Finally we introduce the outer product of two elements in $\mathcal{H}_1, \mathcal{H}_2$. For any $f \in \mathcal{H}_1$, $g \in \mathcal{H}_2$ we define the linear operator $g \otimes f \in \mathcal{S}(\mathcal{H}_1, \mathcal{H}_2)$, pointwise  by 
\be \label{Eq_DefTensProd}
g \otimes f [h] := g \langle f, h \rangle_1 \quad \quad \forall h \in \mathcal{H}_1.
\ee
By virtue of this definition it is possible to endow $\mathcal{S}(\mathcal{H}_1, \mathcal{H}_2)$ with a particularly natural basis: 
  If $\{f_n\}_{n \in \N}, \{g_n\}_{n \in \N}$ are ONBs of $\mathcal{H}_1$, $\mathcal{H}_2$ respectively, then the set $\{g_n \otimes f_m\}_{ n,m \in \N }$ is an ONB of 
  $\mathcal{S}(\mathcal{H}_1, \mathcal{H}_2)$. Finally, we notice that the outer product notation can be used to restate the spectral theorem for a compact, symmetric operator $L$ in \eqref{spectral_decomposition} as follows
$$
L=	  \sum_{n \in \N } \eta_n  f_n \otimes  f_n.
$$
In the next step we bring to bear these notations to 
the analysis of the functional regression problem \eqref{model_1}.

\subsection{The functional linear model}  \label{Subsection_linear_model}

In this Section we introduce the functional, linear regression model \eqref{model_1} in a more rigorous way. Let  $\mathcal{T} \subset \R^d$ denote a compact and  non-empty set and $\mu_1$, $\mu_2$ measures  defined on 
some $\sigma$-algebra on $\mathcal{T}$. Furthermore define
 $H_1:= (L^2(\mathcal{T}), \mu_1)$ and 
 $H_2:= (L^2(\mathcal{T}), \mu_2)$ as the
 spaces  of all measurable, real-valued functions on $\mathcal{T}$, that are square integrable w.r.t.\ $\mu_1$ and  $\mu_2$, respectively.
Equipped with the inner products
$$
\langle f, g \rangle := \int_\mathcal{T} f(t) g(t) d \mu_i(t) \quad \quad f,g \in H_i ~~~~(i=1,2)
$$
$H_1$ and $H_2$   are Hilbert spaces. Notice that the inner product $\langle f, g \rangle $ depends on the index $i=1,2$, but for the sake of simplicity we do not make this explicit. Accordingly, the norms induced by the inner products are denoted by $\|\cdot \|$. 

This general setup includes many of the standard scenarios treated in the related literature. For instance to retrieve the model \eqref{functions_on_regressors}  with functional regressors and scalar responses  \citep[see][]{hall2007} it suffices to  set $\mathcal{T}=[0,1]$, $\mu_1= \lambda$ (the Lebesgue measure) and $\mu_2= \delta_1$ (the Dirac  measure at the point  $1$). Another typical setting is to choose both measures as the Lebesgue measure, which gives functional inputs and outputs   \citep[see, for example,][among many others]{yao2005}.  Further important non-standard cases such as spatio-temporal functions with continuous time and discrete space components  \citep[see][]{constantinou2015} can be accommodated as well.

Let $(X_1, Y_1),\ldots , (X_N, Y_N)$ denote 
$N$ observations  from a time series
$\{ (X_n, Y_n) \}_{n \in \Z} \subset H_1 \times H_2$, which 
  are generated according to the linear model \eqref{model_1}, that is 
\begin{align} \label{hd1}
        Y_n 
    = 
        S X_n + \varepsilon_n 
    \qquad 
        n=1,...,N,
\end{align}
where $ S \in \mathcal{S}(H_1, H_2)$ denotes  the  (unknown)  slope parameter  and $\varepsilon_n \in H_2$ an  observational error.
 By virtue of the outer product (see Section~\ref{Subsection_Operators}) it is possible to transform this linear model into a version, which is 
 more suitable to inference about the slope parameter. More precisely, ``multiplying'' \eqref{hd1}  by $X_n$ from the right gives
\begin{equation} \label{model_2}
Y_n \otimes X_n = SX_n \otimes X_n + \varepsilon_n \otimes X_n \quad \quad n=1,...,N.
\end{equation}
Under the assumption  $\E \|X_n\|^2, \E \|\varepsilon_n\|^2 <\infty$ 
the operators $ SX_n \otimes X_n , \varepsilon_n \otimes X_n$ are random elements in $\mathcal{S}(H_1, H_2)$. 
Moreover, if the random functions $X_n, \varepsilon_n$ are also centered, taking   expectations on both sides of \eqref{model_2} gives
\begin{equation}  \label{equation_expectation}
\mathbb{E} Y_n \otimes X_n = S \Gamma+\E \varepsilon_n \otimes X_n.
\end{equation}
Here $\Gamma := \mathbb{E} X_n \otimes X_n$ is the covariance operator of $X_n$ (recall that the sequence $\{ X_n\}  _{n\in \N} $  of  regressors is stationary). Note that we merely assume  centered  regressors for ease of presentation and adaption to the non-centered case is simple (for details see Remark~\ref{remark_centering}). Under the additional assumption of weak exogeneity, i.e., $\E \varepsilon_n \otimes X_n=0$, equation \eqref{equation_expectation} entails the fundamental identity
\begin{equation}  \label{inverse_characterization}
\mathbb{E} Y_n \otimes X_n = S \Gamma.
\end{equation}



The task of recovering the operator $S$ from
equation \eqref{inverse_characterization}  
is non-trivial, even if we knew the ``true'' expectation $S \Gamma = \mathbb{E} Y_n \otimes X_n$ and the  covariance operator $\Gamma$. 
One obvious condition for a complete recovery of $S$ is identifiability, which is satisfied, if $\Gamma$ is an injective  operator. However, even in this case, as  $\Gamma$ is compact, its inverse must be unbounded and hence can only be defined on a dense linear subspace. We refer the interested reader to \cite{DunSchBookPart1} and \cite{WeidmannBook80} for a detailed discussion of (un)bounded operators.

 A remedy for this problem is given by the application of a regularized inverse, i.e.\ a sequence of continuous operators $\{\Gamma^\dagger_k\}_{k \in \N}$,  converging pointwise to $\Gamma^{-1}$. Of course this means that $\vvvert \Gamma^\dagger_k \vvvert_\mathcal{L} \to \infty = \vvvert \Gamma^{-1} \vvvert_\mathcal{L}$, but for each finite $k$ the operator $S \Gamma \Gamma^\dagger_k$ is well defined on the whole space. Moreover, for sufficiently large $k$ we expect that  $S \approx S \Gamma \Gamma^\dagger_k$ in the sense that  $\vvvert S - S \Gamma \Gamma^\dagger_k \vvvert$ becomes arbitrarily small.
 Let
 $$
\Gamma  := \sum_{i=1}^\infty {\lambda_i} e_i \otimes e_i,
$$
denote the spectral decomposition of the operator $\Gamma$, with eigenvalues $\lambda_1 \ge \lambda_2 \ge ... > 0$ and corresponding eigenfunctions $e_1, e_2,...$. A 
 typical example of a regularized inverse operator is given by the spectral cut-off regularizer
$$
\Gamma^\dagger_k := \sum_{i=1}^k \frac{1}{\lambda_i} e_i \otimes e_i,
$$
which evidently has operator norm $\vvvert \Gamma^\dagger_k \vvvert_\mathcal{L} = \lambda_k^{-1}< \infty$. We also point out that  $ \Gamma \Gamma^\dagger_k =: \Pi_{ k }$, where $\Pi_{ k }$ is the projection on the space spanned by the first $k$ eigenfunctions $e_1,...,e_k$ of $\Gamma$.  Notice that, if this was the whole problem, we could simply choose a large, but finite $k$ and receive an arbitrarily precise approximation of $S$ via $S \Gamma \Gamma^\dagger_k$. However, in 
practice neither the true expectation $\E Y_n \otimes X_n$, nor the true covariance operator $\Gamma$ are known and have to be estimated from the data. For this purpose  we define 
\begin{equation} \label{h2}
\hat \Gamma_N := \frac{1}{N} \sum_{n=1}^N X_n \otimes X_n \quad
\end{equation}
as the standard estimate of the covariance operator $\Gamma$ and $ \frac{1}{N} \sum_{n=1}^N Y_n \otimes X_n$ as  estimate of $\E Y_n \otimes X_n$.
This gives an empirical analogue of equation 
\eqref{inverse_characterization}, that is 
\begin{equation} \label{h1}
 \frac{1}{N} \sum_{n=1}^N Y_n \otimes X_n = S \hat \Gamma_N + U_N ,
\end{equation} 
where  
\begin{equation} \label{def_U}
  \quad U_N := \frac{1}{N} \sum_{n=1}^N \varepsilon_n \otimes X_n   
\end{equation}
is  a remainder term, arising from \eqref{model_2}.  Note that  the  identity~\eqref{h1} provides a way of estimating  $S \Gamma$. 
We define the empirical version of the regularized inverse by

\begin{equation} \label{h3}
\hat \Gamma^\dagger_k := \sum_{i=1}^{k } \frac{1}{ \hat \lambda_i}  \hat e_i \otimes  \hat e_i,
\end{equation}
where $\hat \lambda_1 \ge \hat \lambda_2 \ge ... \ge 0$ are the ordered eigenvalues of $\hat \Gamma_N$ and $\hat e_1, \hat e_2,...$ the corresponding eigenfunctions. An estimator of the operator 
$S$ is now given by
\begin{equation} \label{h4}
        \hat{S}_N 
    :=  
        \frac{1}{N} \sum_{n=1}^N Y_n \otimes X_n \hat \Gamma_k^\dagger
    =
        S \hat \Gamma_N \hat \Gamma^\dagger_k + U_N \hat \Gamma^\dagger_k
    = 
        S \hat \Pi_{ k } + U_N \hat \Gamma^\dagger_k~,
\end{equation}
where $\hat \Pi_{ k }$ is the projection on the subspace spanned by the the first $k$ eigenfunctions of 
the empirical covariance operator $\hat \Gamma_N$. This  equation differs notably from the ideal 
$
S \Gamma \Gamma^\dagger_k 
=
S \Pi_{ k }
$
by the noise term $ U_N \hat \Gamma^\dagger_k$ (which makes it a statistical inverse problem; see the discussion in the introduction). If $k$ is large compared to $N$ this remainder can  potentially spoil  the  estimate, because the noise $U_N$ is amplified by the regularized inverse $\hat \Gamma^\dagger_k$. Consequently the solution of the  inverse problem as described in model~\eqref{model_2},  features a trade-off between regularization parameter $k$ and sample size $N$. 

As a corollary of our later discussion we will get a consistency result for $\hat S_N$ under suitable regularity conditions. For works specifically aimed at reconstructing the operator $S$ see, for instance, \cite{hall2007}, \cite{benatia2017} and \cite{masaaki2017}.

\section{Statistical inference for the location of $S$} \label{sec3}

In this section we introduce the concept of relevant hypotheses for the location of $S$ and discuss the assumptions that are made throughout this paper. Furthermore we revisit the problems in deriving a weak convergence result for the estimator $\hat S_N$ as  described in  \cite{cardot2007}, \cite{crambes2013} and suggest a new technique - the smoothness shift - to grapple with them. Based on this idea, we establish an invariance principle for the estimated
distance $\vvvert \hat S_N-S_0 \vvvert^2$, which is used to 
develop pivotal statistics for testing relevant hypotheses.

\subsection{Relevant differences in the slope}
\label{Subsection_relevant_hypothesis}

A typical concern in the context of model \eqref{model_1} is the comparison of the true slope $S$  with 
some hypothesized operator $S_0 \in \mathcal{S}(H_1, H_2)$. This problem is often addressed by constructing statistical tests for the hypotheses   
\begin{align} \label{hd2}
   H_0: S=S_0 \quad  {\rm versus } \quad H_1 : S \not = S_0 ~.
\end{align}
These hypotheses may for instance  be used with $S_0 = 0$, to determine the explanatory power of the model, or with a slope $S_0$ from a theoretical model. Various tests have been devised for these (or related) hypotheses, such as in \cite{cardot2003b, cardot2004}, where the cross covariance operator $\mathbb{E}Y_1 \otimes X_1= S \Gamma$ is used to test the mathematically equivalent null hypothesis $S \Gamma = S_0 \Gamma$. In a similar spirit 
\cite{hilgertetal2010}
propose minimax optimal adaptive tests based on projections of $Y$ onto the  principal components of $\Gamma$ or \cite{kong2016} employ traditional tests (such as the $F$-test) on finite dimensional subspaces, to validate model fit. \\
Although from a decision theoretical perspective  all of these methods define  consistent tests for the hypotheses \eqref{hd2}, 
they have the drawback of telling us little about the actual proximity of the operators $S$ and $S_0$. For example a test for $H_0$, based on the quantity $\vvvert S \Gamma - S_0 \Gamma \vvvert$ 
is difficult to interpret, as $\vvvert S \Gamma - S_0 \Gamma \vvvert$ may be arbitrarily small, while in fact the true difference $\vvvert S- S_0\vvvert$  is  arbitrarily large. 
In particular if a user  decides to perform data analysis  under the assumption $S=S_0$  after a test has not rejected the hypothesis 
$S \Gamma = S_0 \Gamma $, there is no guarantee that $S_0$ is indeed a good approximation of $S$. This insight has motivated some of the contemporary approaches to confidence regions for $S$ (see the discussion in the introduction), where even a slower than parametric convergence rate is accepted, in return for an inference method, based on the original slope operator   $S$.\\
In this paper we take up this insight and  base statistical inference directly 
on the measure  $\vvvert S- S_0\vvvert$. Evidently the point hypothesis in \eqref{hd2} is equivalent to $\vvvert S- S_0\vvvert =0$. However in this work, we want to investigate the ``relevant hypotheses'', given by
\begin{eqnarray} \label{relevant_difference}
&&H_0^\Delta: \vvvert S-S_0\vvvert ^2 \le \Delta \quad {\rm versus }  \quad H_1^\Delta:  \vvvert S-S_0\vvvert ^2 > \Delta, \\
\text{and} ~~~~~~~~~~~~~~\nonumber &&\\ 
\label{equiv}
&& H_0^{\tilde \Delta}: \vvvert S-S_0\vvvert ^2 \ge \tilde \Delta \quad {\rm versus }  \quad H_1^{\tilde \Delta}:  \vvvert S-S_0\vvvert ^2  < \tilde \Delta,
\end{eqnarray}
where $\Delta>0$ and  $\tilde \Delta>0$ are predetermined thresholds. Our suggestion  to replace the ``classical'' 
hypotheses in \eqref{hd2} by hypotheses of the form \eqref{relevant_difference} or  \eqref{equiv} has theoretical as well as practical reasons.\\
From a theoretical perspective, testing  exact equality of $S$ and $S_0$ (both of which are infinite dimensional objects) might be questionable, because it is rarely believed that the hypothesized slope coincides perfectly with the true one. Therefore, testing $H_0$ means testing a hypothesis, which is essentially known to be false. This point is important, because any consistent test will detect any arbitrarily small deviation from $H_0$ if the sample size is sufficiently large \citep[see][]{berkson1938} and thus we  expect any consistent test for $H_0$ to eventually reject the hypothesis. This problem is evaded by the  consideration of relevant hypotheses~\eqref{relevant_difference}, which only refer to  sufficient proximity of $S$ and $S_0$. \\
We also believe that the relevant hypotheses are more congruent with common interests of users, who are less concerned with perfect equality than with the practical issue of comparable performance. Often users are willing to trade  - at least to some extend - statistical precision for a simpler model. In this sense the thresholds $\Delta, \tilde \Delta$ in the relevant hypotheses can be understood as the largest deviation between $S$ and $S_0$, which is still acceptable for the user. This also highlights that the choice of the threshold will depend on the application in hand and is not an a priori question.
Finally we point out that a  formulation of the hypotheses in  the form 
 \eqref{equiv} might be preferred if one is interested to work under the assumption $S=S_0$. If  the null hypothesis $H_0^{\tilde \Delta}$ is rejected at level $\alpha$, the risk of erroneously assuming $\vvvert S-S_0\vvvert ^2  \le \tilde \Delta$,
is controlled, which is not possible using the ``classical'' hypotheses in \eqref{hd2}, because there is no symmetry in the problem.\\
Although  the hypotheses~\eqref{relevant_difference} and \eqref{equiv} 
are different with respect to their statistical interpretation it
will become clear later that from a mathematical point of view 
they  are in some sense equivalent. Therefore, and also for the sake of brevity, we restrict ourselves 
to the development of testing procedures for the hypotheses in \eqref{relevant_difference} and 
denote the null hypothesis as ``no relevant deviation from $S_0$''.

\subsection{Assumptions}\label{Subsection_assumptions}

The theoretical results of this paper require several assumptions, which are explained and illustrated in this section. 

Recall that a  
stationary sequence 
$\{Z _j\}_{j\in \Z}$ of random variables is called $\phi$-{mixing}, if $\lim_{k \to \infty }\phi(k) =0$,
where 
\begin{align*}
		\phi(k) :=& \sup_{h \in \Z} \sup\big \{ 
			{|\mathbb{P}(F| E)} - \mathbb{P}(F)|
			\, : \,  E \in \sigma(Z _1,\ldots ,Z _h) , F \in 
		 \sigma(Z_{h+k},Z_{k+h+1}, \ldots ) , ~\mathbb{P}(E)>0 \big \}.
\end{align*}
denotes  the  {  $\phi$-dependence coefficients} and
$\sigma(Z _h,...,Z _k),
$
is  the $\sigma$-algebra   generated by $Z _h,\ldots  ,Z _k$ \citep[see for instance][]{dehling}.

\begin{assumption} \label{ass31} 
~~\\
{\rm 
(1) \textit{Smoothness:} For some $\beta \ge 0$ the operators $S$ and $S_0$ are elements of the smoothness class
$$
\mathcal{C}(\beta,\Gamma):=\big\{R \Gamma^{\beta}: R \in \mathcal{S}_2(H_1, H_2) \big\}.
$$ 
(2) \textit{Moments:} There exists some $\kappa>0$, such that $\E \|X_1\|^{4+\kappa}$, $\E \|\varepsilon_1\|^{4+\kappa}<\infty$.
\\
(3) \textit{Dependence:} The sequence of random functions $\{(X_n, \varepsilon_n)\}_{n \in \Z}$ is centered, strictly stationary and  $\phi$-mixing, such that $$\phi(1)<1 ~~
\text{ and  } ~~\sum_{h \ge 1} \sqrt{\phi(h)}<\infty
$$

(4) \textit{Coefficients}: There exists a finite constant $C>0$, s.t.\ the inequality  $\E | \langle X_1, e_j \rangle|^4 \le C (\E | \langle X_1, e_j \rangle|^2)^2$ holds for any $j \in \N$. 

(5)  \textit{Weak exogeneity:} $\E \varepsilon_1 \otimes X_1=0$.

(6) \textit{Decay of eigenvalues and eigengaps:} For some $ \SM >0$ and large enough $C >0$, the eigenvalues of the covariance operator $\Gamma$ satisfy
$$
\lambda_k \le C k^{-{\SM}} \quad \quad \textnormal{and} \quad \quad \lambda_k-\lambda_{k+1} \ge C^{-1} k^{-{\SM}-1} \quad  \forall k \in \N.
$$

(7)  \textit{Rates of regularization:} The regularization parameter $k=k(N)$ is chosen such that for some $\delta>0$
$$
\frac{k^{\SM+1+\delta}}{\sqrt{N}} \to 0 \quad \quad \textnormal{and} \quad \quad \frac{k^{\SM \beta}}{\sqrt{N}} \to \infty.
$$
}
\end{assumption}

\begin{rem} \label{rem1}
~~ \\ 
{\rm
(a) Assumption~(1) is a smoothness condition on the slope operators $S, S_0$, w.r.t.\ 
 the principal components of $\Gamma$. To see this let $S = R\Gamma^\beta$ and $x \in H_1$. It follows that
$ 
    S x
= 
    R \Gamma^\beta x 
=  
    R y,
$
where $y= \sum_{r \in \N} e_r (\lambda_r^\beta \langle x, e_r \rangle) $. 
Evidently the $L^2$-coefficients $(\lambda_r^\beta \langle x, e_r \rangle)$ of $y$  decay faster than those of $x$, as they are weighted by a power of the decaying eigenvalues. In this sense $y$ is smoother than $x$ and a larger value of  $\beta$ translates into lighter coefficients and thus more smoothing.
In this way $Sx$ can be understood as the application of an integral operator $R$ to a smoothed version of $x$.  
 Assumption \ref{ass31}(1) was also considered in \cite{benatia2017} in their study of the Tikhonov regularization, where it was denoted by the common label of {\it source condition}. At the beginning of their Section~3 the smoothing effect of $\Gamma^\beta$ is explored by various examples.\\
In the following calculations we demonstrate that Assumption \ref{ass31}(1) can be translated into {\it fast decaying tails} of
the operator $S$ '', which is another standard way of stating smoothness in the literature. Consider the application of $R$ to a basis function $e_q$ of $\Gamma$
$$
R e_q = R \Gamma^{ \beta} \Gamma^{- \beta} e_q = S \Gamma^{- \beta} e_q.
$$
Notice that $\Gamma^{- \beta} e_q= \lambda_q^{-\beta} e_q$ is indeed well defined. We can now express $R e_q $ as
$$
R e_q=S\Gamma^{-\beta} e_q =\Big[ \sum_{i,j \in \N } s_{i,j} e_i \otimes e_j  \Big] \Big[ \sum_{k\in \N} \lambda_k^{-\beta} e_k \otimes e_k  \Big] e_q = \sum_{i\in \N} s_{i, q} \lambda_q^{-\beta} e_i ,
$$
where $s_{i,j} := \langle S, e_i \otimes e_j \rangle$ (with the inner product on the space of Hilbert--Schmidt operators, see Section ~\ref{Subsection_Operators}).  Now the squared Hilbert--Schmidt norm of $R$ equals
\begin{align} \nonumber 
 \infty>\vvvert R \vvvert^2=&\sum_{q \in \N} \langle Re_q, R e_q \rangle =\sum_{q \in \N} \langle \sum_{i \in \N} s_{i,q} \lambda_q^{-\beta} e_i, \sum_{l \in \N} s_{l,q} \lambda_q^{-\beta} e_l  \rangle =  \sum_{q,l \in \N} \lambda_q^{-2\beta} s_{l,q}^2 \\
 =& \sum_{q,l \in \N} \lambda_q^{-2\beta} \langle S e_q, e_l \rangle^2 = \sum_{q \in \N}  \lambda_q^{-2\beta} \|S e_q\|^2 , \label{hd4} 
\end{align}
where  we have used Parseval's identity in the last step. In the scalar response model \eqref{functions_on_regressors} one has $\|S e_q\|^2 = \langle \varphi_S, e_q \rangle^2$. Thus the summability 
in \eqref{hd4} is a smoothness condition for $\varphi_S$. In this form it has been used by \cite{hall2007} (see  equation~(3.3) in that paper). In the more general model \eqref{model_1} the decay of $\|S e_q\|^2 $ was considered as a smoothness condition  in \cite{crambes2013} (see their Definition~3). 
\medskip 

(b) Assumptions~\ref{ass31}(2) - (5)  are required to derive a weak convergence result stated in Theorem~\ref{theorem_1}. The existence of moments of larger order than $4$ is typical for proving second order, weak invariance principles  \citep[it corresponds to the assumption of more than second moments for the first order; see][]{berkes2013}. The mixing assumption  is weaker than those in the related literature, where almost exclusively  i.i.d.\ observations are considered, \citep[see][among others]{hall2007, crambes2013, benatia2017, kato2019, babii2020}. 
Assumption \ref{ass31}(4)  regarding the moments of
the  coefficients
$\langle X_1,  e_j \rangle $ is standard in the literature \citep[see for example][]{hall2007, crambes2013, kato2019} and is needed for technical reasons. We use it in the proof of Lemma \ref{Lem_Bounds-NonSmoothed-Var-and-Bias-of-S_N}, part ii).
Assumption \ref{ass31}(5) regarding the  exogeneity is again weaker than in most of the literature. Here often strong exogeneity is required
(see the literature cited before), where the work of  \cite{benatia2017} constitutes an important exception.
\medskip 

(c)  Assumption \ref{ass31}(6) guarantees a  polynomial decay rate for the eigenvalues of $\Gamma$, that is   $\lambda_k \sim k^{-\SM}$. More important than the precise rate of decay is the assumption on the eigengaps, which have to be controlled for identifiability reasons. Assumptions  of this type are  standard in the literature, in particular in the analysis of spectral cut-off estimators
\citep[see,][among others]{hall2007,qiao2018functional}, even though they are sometimes made implicitly  \citep[see Lemma~12 in][]{crambes2013}. \\
 The two decay rates  in Assumption \ref{ass31}(7) expose the  trade-off inherent in the choice of $k$. On the one hand $k$ has to increase slowly enough, such that the $k$-th eigenvalue $\lambda_k$ can be distinguished from $\lambda_{k+1}$. This means that the $k$-th eigengap of size $k^{-\SM-1}$ is of larger order than the estimation error of size $1/\sqrt{N}$. Our assumption is almost sharp in the sense that we assume $k^{\SM+1}/\sqrt{N}$ to decay at some arbitrarily slow polynomial rate in $N$. We use this additional leverage to derive not only a CLT but a stronger weak invariance principle, where remainders have to be controlled uniformly; see Lemma~\ref{Lem_Bounds-NonSmoothed-Var-and-Bias-of-S_N}. A sharp version has been used for confidence bands in the scalar response model
 by  \cite{kato2019}. On the other hand, $k$ has to increase fast enough, such that the asymptotic bias is  negligible, more precisely
$$
    \vvvert S-S\Pi_{ k } \vvvert= \vvvert R \Gamma^\beta [Id-\Pi_{ k }]\vvvert \le \vvvert R \vvvert \vvvert  \Gamma^\beta [Id-\Pi_{ k }] \vvvert_\mathcal{L} =   \vvvert R \vvvert \lambda_{k+1}^\beta =\mathcal{O}(k^{-\SM \beta})= o(1/\sqrt{N}).
$$
It can be shown that the above bound is sharp for general operators and hence the bias rate cannot be improved upon. Notice that the two Assumptions on $k$ can be simultaneously fulfilled if and only if
$$
 \beta> 1+1/\SM.
$$
}

\end{rem}

\subsection{Main results}
\label{Subsection_weak_convergence}

In order to develop  a statistical test for the relevant hypotheses   defined in  \eqref{relevant_difference}  it is reasonable 
to estimate   the difference $\vvvert S - S_0 \vvvert^2$. A natural estimator is given by $\vvvert \hat S_N-S_0 \hat \Pi_{ k } \vvvert ^2$.  While it is also possible to replace $S_0 \hat \Pi_{ k }$ by $S_0$ in the subsequent theory, we prefer to work with $S_0 \hat \Pi_{ k }$  as  it does not seem sensible to compare $S_0$ along dimensions to $S$, where no estimate for $S$ exists (this common sense approach is also supported by simulations).  
In order to define a consistent and (asymptotic) 
 level-$\alpha$ test,  we are interested 
in  the weak convergence of the difference
\begin{equation} \label{functional_convergence}
\sqrt{N} \big( \vvvert \hat S_N-S_0 \hat \Pi_{ k } \vvvert ^2 - \vvvert S-S_0\vvvert ^2   \big).
\end{equation}
The standard approach to this problem  would be to: first establish weak convergence of the  difference $\sqrt{N} (\hat S_N - S)$ 
in the space $\mathcal{S}(H_1,H_2)$; then deduce weak convergence of the test statistic in \eqref{functional_convergence} by applying the Delta method (see Section~3.9 in \cite{vandervaart1996})  to the mapping
$ S  \to \vvvert S-S_0\vvvert ^2  $.  Notice that, using the OLS estimator, this method works for finite dimensional linear regression.
However this approach fails  in the context of functional regression problems, as 
it is not possible to find a standardizing  
 sequence, say $\{a_N\}_{N\in \N}$,  such  that 
 the difference $a_N( \hat S_N - S) $   converges weakly to a non-degenerate limit, if  $k$ converges to infinity    with the sample size, which is necessary to obtain  an asymptotically vanishing bias \citep[see, for example,][]{crambes2013}.
 More precisely,  if  $k$  is fixed  one can prove  that $\sqrt{N}(\hat S_N - S)\Pi_k$ converges weakly to a Gaussian random vector. A similar result was derived by \cite{benatia2017} for a different regularization method. However  these authors likewise concluded that for decaying regularization, i.e.\ $k \to \infty$ as $N \to \infty$ the sequence $\sqrt{N}(\hat S_N - S)$ has a degenerate limit 
 caused by an inflation of the error variance.

 Nevertheless the fact that no weak convergence of $\sqrt{N} (\hat S_N - S)$  in the space  $\mathcal{S}(H_1, H_2)$  can be  established does not  necessarily imply that the difference in  \eqref{functional_convergence} cannot converge weakly. Indeed we will demonstrate  that the  mapping
$ S  \to \vvvert S-S_0\vvvert ^2  $  has a smoothing effect on $\hat S_N$.  Therefore the inflation of the observation error $U_N$ (defined in \eqref{def_U}) is compensated  and it is  possible to  establish weak convergence of 
\eqref{functional_convergence} with a normally distributed limit. The precise statement will be given in Proposition~\ref{proposition_1} below. To get an intuition how this smoothing works
note  that by  the third binomial formula in Hilbert spaces we have  
$$
\sqrt{N} \big( \vvvert \hat S_N-S_0 \hat \Pi_{ k } \vvvert ^2 - \vvvert (S-S_0)  \Pi_{ k }\vvvert ^2   \big) =\langle   \sqrt{N}[ \hat S_N-S_0 \hat \Pi_{ k }- (S-S_0)  \Pi_{ k }], \hat S_N-S_0 \hat \Pi_{ k } +  (S-S_0)  \Pi_{ k } \rangle.
$$
After some careful bounding of the error terms (recall that the left side of the inner product asymptotically degenerates), we can show that this  equals
$$
2 \sqrt{N}\langle  \hat S_N-S_0 \hat \Pi_{ k }- (S-S_0)  \Pi_{ k },  S-S_0  \rangle + o_\PP(1).
$$
By  Assumption~\ref{ass31}$(1)$    there exist  operators $R, R_0 \in \mathcal{S}(H_1, H_2)$, such that $S=R \Gamma^\beta, S_0 = R_0 \Gamma^\beta$. Hence we can perform the following {\it smoothness shift}, moving smoothness in the form of $\Gamma^\beta$ from the second to the first component of the inner product, i.e.
\begin{align*}
        2 \sqrt{N}\langle  \hat S_N-S_0 \hat \Pi_{ k }- (S-S_0)  \Pi_{ k },  S-S_0  \rangle  
    = 
    & 
        2 \sqrt{N}\langle  \hat S_N-S_0 \hat \Pi_{ k }- (S-S_0)  \Pi_{ k },  [R-R_0] \Gamma^\beta  \rangle 
    \\
    = & 
        2 \sqrt{N}\langle  [\hat S_N-S_0 \hat \Pi_{ k }- (S-S_0)  \Pi_{ k } ]\Gamma^\beta  ,  R-R_0\rangle.
\end{align*}
It turns out that the smoothing effect of $\Gamma^\beta$ on the left stops the error inflation and thus weak convergence 
to a non-degenerate   and  (with some technical linearization) normally distributed  limit can be proved. Intuitively the smoothing works, because
$$
        \hat S_N \Gamma^\beta 
    = 
        \frac{1}{N} \sum_{n=1}^N Y_n \otimes X_n \hat \Gamma^\dagger_k \Gamma^\beta 
    \approx  
        \frac{1}{N} \sum_{n=1}^N Y_n \otimes X_n \Gamma^{\beta-1},
$$
i.e.\ the regularized inverse $\hat \Gamma^\dagger_k$ and the shifted operator $\Gamma^\beta$ "cancel out" to $\Gamma^{\beta-1}$, thus eliminating the pathology of the asymptotically unbounded operator $\hat \Gamma^\dagger_k$. If the term on the right is centered and standardized by $\sqrt{N}$ it is asymptotically normal.
The price we pay for this non-standard approach is a more elaborate proof, where many  difficult remainders have to be controlled. As announced we now formulate the precise result.

\begin{prop} \label{proposition_1}
Under the Assumptions presented in Section~\ref{Subsection_assumptions}, it holds that
\begin{equation} \label{h5} 
    T_N
= 
    \sqrt{N} \left( 
        \vvvert \hat S_N-S_0 \hat \Pi_{ k } \vvvert ^2 
        - 
        \vvvert (S-S_0)\Pi_{k}  \vvvert ^2   
    \right) 
\stackrel{ d }{ \to }
    \mathcal{N}(0, \tau^2),
\end{equation}
where 
the long-run variance $\tau^2$ is defined by 
\begin{align} \label{def_long_run_variance}
    \tau^2 
:=&
    4 \Bigg\{ 
        \sum_{h \in \mathbb{Z}} 
        \mathbb{E} \Big[ 
            \langle (R - R_0) L[ X_0 \otimes X_0 - \Gamma], 
            R - R_0 \rangle \,
            \langle (R - R_0) L[ X_h \otimes X_h - \Gamma], R - R_0 \rangle \Big]
\\
& 
    \quad \, 
    +  2 \mathbb{E} 
        \Big[ \langle ( R - R_0) L[ X_0 \otimes X_0 - \Gamma], 
        R - R_0 \rangle \,
        \langle \varepsilon_h \otimes X_h \Gamma^{\beta-1}, R - R_0 \rangle \Big] \nonumber
\\
& 
    \quad \, \, \, \, +  
    \mathbb{E} \Big[ 
        \langle \varepsilon_0 \otimes X_0 \Gamma^{\beta-1},
        R - R_0 \rangle \,
        \langle \varepsilon_h \otimes X_h \Gamma^{\beta-1},
        R - R_0 \rangle \Big] \Bigg\}. \nonumber
\end{align}
Here the map $L$ is given in Definition~\ref{Def_LinerazationOperat} of the Appendix.
\end{prop}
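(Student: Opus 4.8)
The argument follows the road map sketched just above the statement; the task is to turn the heuristics into quantitative bounds. Write $D_N := \hat S_N - S_0\hat\Pi_k - (S-S_0)\Pi_k$. Using $\hat S_N = S\hat\Pi_k + U_N\hat\Gamma^\dagger_k$ from \eqref{h4} one gets the algebraic identity $D_N = (S-S_0)(\hat\Pi_k - \Pi_k) + U_N\hat\Gamma^\dagger_k$, and the difference-of-squares formula in $\mathcal{S}(H_1,H_2)$ gives
\[
T_N \;=\; \sqrt{N}\,\vvvert D_N\vvvert^2 \;+\; 2\sqrt{N}\,\langle D_N,\, (S-S_0)\Pi_k\rangle .
\]
The first (quadratic) term I would dispose of immediately: by $\vvvert D_N\vvvert^2 \le 2\vvvert (S-S_0)(\hat\Pi_k-\Pi_k)\vvvert^2 + 2\vvvert U_N\hat\Gamma^\dagger_k\vvvert^2$ together with the bounds $\vvvert U_N\hat\Gamma^\dagger_k\vvvert^2 = O_\PP(k^{\gamma+1}/N)$ and $\vvvert(S-S_0)(\hat\Pi_k-\Pi_k)\vvvert^2 = o_\PP(k^{\gamma+1}/N)$ — which I expect to be exactly the content of Lemma~\ref{Lem_Bounds-NonSmoothed-Var-and-Bias-of-S_N} (this is where Assumption~\ref{ass31}(4) enters) — Assumption~\ref{ass31}(7) yields $\sqrt{N}\vvvert D_N\vvvert^2 = o_\PP(1)$, hence also $\sqrt{N}\vvvert D_N\vvvert = o_\PP(N^{1/4})$.

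For the linear term I perform the \emph{smoothness shift}. First replace $(S-S_0)\Pi_k$ by $S-S_0 = (R-R_0)\Gamma^\beta$ (Assumption~\ref{ass31}(1)): the discarded part $(R-R_0)\Gamma^\beta(Id-\Pi_k)$ has Hilbert--Schmidt norm $O(\lambda_{k+1}^\beta) = O(k^{-\gamma\beta})$, so by Cauchy--Schwarz the error is $O_\PP(\sqrt{N}\vvvert D_N\vvvert\, k^{-\gamma\beta}) = o_\PP(N^{1/4}k^{-\gamma\beta}) = o_\PP(1)$ by the second rate in Assumption~\ref{ass31}(7). Then, using that $\Gamma^\beta$ is self-adjoint and the Hilbert--Schmidt identity $\langle A,B\Gamma^\beta\rangle = \langle A\Gamma^\beta, B\rangle$, I move the smoothing onto $D_N$:
\[
2\sqrt{N}\,\langle D_N,\, S-S_0\rangle \;=\; 2\sqrt{N}\,\big\langle D_N\Gamma^\beta,\, R-R_0\big\rangle \;=\; 2\sqrt{N}\,\big\langle (R-R_0)\Gamma^\beta(\hat\Pi_k-\Pi_k)\Gamma^\beta + U_N\hat\Gamma^\dagger_k\Gamma^\beta,\; R-R_0\big\rangle .
\]

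Now I linearize the two smoothed pieces. For the noise piece, write $\hat\Gamma^\dagger_k\Gamma^\beta = \big[\hat\Pi_k - \hat\Gamma^\dagger_k(\hat\Gamma_N-\Gamma)\big]\Gamma^{\beta-1}$ (valid since $\hat\Gamma^\dagger_k\hat\Gamma_N = \hat\Pi_k$ and $\beta-1 > 1/\gamma > 0$, so $\Gamma^{\beta-1}$ is bounded). Using $\vvvert\sqrt{N}U_N\vvvert = O_\PP(1)$ (Assumptions (2),(3),(5)) and $\vvvert\hat\Gamma^\dagger_k\vvvert_{\mathcal L}\asymp\lambda_k^{-1}\asymp k^{\gamma}$, the correction $\sqrt{N}U_N\hat\Gamma^\dagger_k(\hat\Gamma_N-\Gamma)\Gamma^{\beta-1}$ is $O_\PP(k^{\gamma}/\sqrt N) = o_\PP(1)$, and $\sqrt{N}U_N\hat\Pi_k\Gamma^{\beta-1}$ may be replaced by $\sqrt{N}U_N\Gamma^{\beta-1} = \tfrac1{\sqrt N}\sum_n \varepsilon_n\otimes X_n\,\Gamma^{\beta-1}$ at the cost of $o_\PP(1)$ (the tail $\Gamma^{\beta-1}(Id-\Pi_k)$ has operator norm $O(k^{-\gamma(\beta-1)})\to 0$, and the swap $\hat\Pi_k\leftrightarrow\Pi_k$ is handled as in Step~1). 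For the bias piece, I invoke the standard first-order perturbation expansion of the spectral projection $\hat\Pi_k$ about $\Gamma$, $\hat\Pi_k-\Pi_k = L_0[\hat\Gamma_N-\Gamma] + \mathcal R_N$ with $L_0$ the eigengap-weighted linear map and $\mathcal R_N$ a second-order remainder; setting $L[M]:=\Gamma^\beta L_0[M]\Gamma^\beta$ (this is the map of Definition~\ref{Def_LinerazationOperat}), the linear part contributes $\tfrac{2}{\sqrt N}\sum_n\langle (R-R_0)L[X_n\otimes X_n-\Gamma], R-R_0\rangle$. Collecting everything,
\[
T_N \;=\; \frac{2}{\sqrt N}\sum_{n=1}^N\Big\{\big\langle (R-R_0)L[X_n\otimes X_n-\Gamma],\,R-R_0\big\rangle + \big\langle \varepsilon_n\otimes X_n\,\Gamma^{\beta-1},\,R-R_0\big\rangle\Big\} + o_\PP(1) \;=:\; \frac1{\sqrt N}\sum_{n=1}^N \zeta_n + o_\PP(1),
\]
where $\zeta_n = 2(a_n+b_n)$ is a centered, strictly stationary array (the $k$-dependence of $L$ makes it a triangular array), measurable w.r.t.\ $(X_n,\varepsilon_n)$ hence $\phi$-mixing with the same coefficients, with moments of order $2+\kappa/2 > 2$ by Assumptions~\ref{ass31}(2),(4). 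A central limit theorem for $\phi$-mixing (triangular) arrays under $\sum_h\sqrt{\phi(h)}<\infty$ then gives $T_N \stackrel{d}{\to}\mathcal N(0,\tau^2)$ with $\tau^2 = \sum_{h\in\Z}\E[\zeta_0\zeta_h]$; expanding $\zeta_0\zeta_h = 4(a_0a_h+a_0b_h+b_0a_h+b_0b_h)$ and using stationarity to symmetrize the cross term, $\sum_h\E[b_0a_h] = \sum_h\E[a_0b_{-h}] = \sum_h\E[a_0b_h]$, reproduces exactly formula \eqref{def_long_run_variance}, whose finiteness is a byproduct of the mixing--moment bound (and one checks en route that the $k$-dependent version of the long-run variance converges to the displayed limit).

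The main obstacle is the last part of Step~3: showing that the conjugated second-order remainder $\sqrt{N}\,(R-R_0)\Gamma^\beta\mathcal R_N\Gamma^\beta$ (and likewise the analogous remainders for $\hat\Gamma^\dagger_k\Gamma^\beta$) is $o_\PP(1)$. The naive estimate $\vvvert\mathcal R_N\vvvert_{\mathcal L} = O_\PP(\vvvert\hat\Gamma_N-\Gamma\vvvert_{\mathcal L}^2/(\lambda_k-\lambda_{k+1})^2) = O_\PP(k^{2(\gamma+1)}/N)$ is \emph{not} summable against $\sqrt N$ under Assumption~\ref{ass31}(7). One must instead show that sandwiching by $\Gamma^\beta$ annihilates the contribution of the high-index directions (those near the truncation level $k$, where the eigengaps, and hence the perturbation, are largest): the smoothing supplies an extra factor $\lambda_k^{2\beta}\asymp k^{-2\gamma\beta}$, which beats $k^{2(\gamma+1)}/N$ precisely because $\beta > 1+1/\gamma$ (cf.\ Remark~\ref{rem1}(c)). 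Making this rigorous requires an eigenvalue-by-eigenvalue control of $\Gamma^\beta(\hat\Pi_k-\Pi_k)\Gamma^\beta$ rather than a crude operator-norm bound, and the book-keeping is compounded by the need to keep all $o_\PP$-terms uniform enough to also feed the sequential invariance principle used later in the paper.
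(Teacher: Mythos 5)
Your road map is the same as the paper's (Proposition~\ref{proposition_1} is obtained there as the case $\FP=1$ of Theorem~\ref{theorem_1}): the third-binomial decomposition into $\sqrt N\vvvert D_N\vvvert^2$ plus a linear term, the disposal of the quadratic part and of the bias $(S-S_0)(\Pi_k-\operatorname{Id})$ exactly as in the treatment of $\mathcal R_1,\mathcal R_2$, the smoothness shift $\langle D_N,S-S_0\rangle=\langle D_N\Gamma^\beta,R-R_0\rangle$, the linearization of the projection and noise pieces, and a CLT for $\phi$-mixing sums. Those reduction steps are carried out correctly, and your algebraic identity $\hat\Gamma^\dagger_k\Gamma^\beta=[\hat\Pi_k-\hat\Gamma^\dagger_k(\hat\Gamma_N-\Gamma)]\Gamma^{\beta-1}$ is a legitimate (and slightly slicker) entry into the noise linearization.

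However, the proof is not complete, and you say so yourself: the assertion that $\vvvert\Gamma^\beta(\hat\Pi_k-\Pi_k)\Gamma^\beta-L(\hat\Gamma_N-\Gamma)\vvvert$ and $\vvvert U_N[\hat\Gamma^\dagger_k\Gamma^\beta-\Gamma^{\beta-1}]\vvvert$ are $o_\PP(1/\sqrt N)$ is precisely Lemma~\ref{Lem_Projection-Statistics-Smoothed-Right-and-Left}, whose proof -- the eigenbasis expansion into $D_1,D_2,D_3$ and the $B_j$'s, the perturbation identities \eqref{Eq_IdentInprodDiff2Eigvect-with-other-Eigvect}--\eqref{Eq_Inequalty-Norm-Diff-Eigenvect}, Lemma~\ref{Lem_FirstTechLem-for-Step1-AsympFormVar} and the eigenvalue-ratio Lemma~\ref{Lem_Bound-Ratio-Eigenvalues} -- is the technical core of the paper, and your sketch (``sandwiching by $\Gamma^\beta$ supplies an extra $\lambda_k^{2\beta}$'') is the right intuition but not an argument: what is actually needed is uniform control of the coordinatewise weights $\lambda_i^\beta\lambda_q^\beta/|\lambda_i-\lambda_q|$ (this is where $\beta>1+1/\SM$ enters, Lemma~\ref{Lem_Bound-Ratio-Eigenvalues}~iii)), plus separate treatment of the quadratic-in-$(\hat e_i-e_i)$ terms. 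Two further points are glossed. First, replacing $U_N\hat\Pi_k\Gamma^{\beta-1}$ by $U_N\Gamma^{\beta-1}$ is \emph{not} ``as in Step~1'': there the smoothing exponent is $\beta$, here it is $\beta-1$, and $\beta-1>1+1/\SM$ is not assumed; you need either a sharp operator-norm bound $\vvvert\hat\Pi_k-\Pi_k\vvvert_{\mathcal L}=O_\PP(k^{\SM+1}/\sqrt N)$ or the paper's device of peeling off $\Gamma^{-\zeta}$ with $\zeta=1/2-1/(2\SM)-\epsilon$ (proof of Lemma~\ref{Lem_Bounds-NonSmoothed-Var-and-Bias-of-S_N}~ii), which also supplies the moment/mixing bound behind $\vvvert\sqrt N\,U_N\Gamma^{-\zeta}\vvvert=O_\PP(k^\epsilon)$ that you invoke informally. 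Second, your linearization map is the $k$-truncated one, so your final step is a triangular-array CLT and you must additionally prove convergence of the $k$-dependent long-run variance to \eqref{def_long_run_variance}; the paper sidesteps this by showing the tail $\sum_{i>k}L_i(\hat\Gamma_N-\FP\Gamma)$ is $o_\PP(1/\sqrt N)$ (Lemma~\ref{Lem_FirstTechLem-for-Step1-AsympFormVar}~i)), so that the summands form a fixed stationary sequence to which Herrndorf's result applies directly. Until these linearization bounds are actually established, the proposal is a correct plan rather than a proof.
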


Using Proposition~\ref{proposition_1}, we could in principle construct a test for the hypothesis of no relevant deviation, presented in \eqref{relevant_difference}, by rejecting 
the null hypothesis, whenever 
\begin{equation} \label{primitive_test}
\sqrt{N} \big( \vvvert \hat S_N-S_0 \hat \Pi_{ k } \vvvert ^2 - \Delta \big)> \tau \Phi^{-1}(1-\alpha),
\end{equation}	
where $\Phi^{-1}$ is the quantile function of a standard normal distribution and $\alpha \in (0,1)$ denotes the nominal level. This decision yields indeed a test which is asymptotically consistent and keeps its nominal level asymptotically. To see  this  we use the  expansion
\begin{align} \label{decomposition}
 \sqrt{N} \big( \vvvert \hat S_N-S_0 \hat \Pi_{ k } \vvvert ^2 - \Delta \big) & = T_{1N} + T_{2N} +T_{3N}~,
 \end{align}
 where  
 \begin{align*} 
 T_{1N} & =   \sqrt{N} \big( \vvvert \hat S_N-S_0 \hat \Pi_{ k } \vvvert ^2 -  \vvvert (S-S_0)  \Pi_{ k }\vvvert^2\big),  \\
 T_{2N} &=    \sqrt{N} \big( \vvvert (S-S_0)  \Pi_{ k } \vvvert ^2 -  \vvvert S-S_0\vvvert^2\big), \\
 T_{3N} &=  \sqrt{N} \big(  \vvvert S-S_0\vvvert^2 - \Delta\big). 
\end{align*}
By Proposition \ref{proposition_1} the first term $T_{1N} $  in \eqref{decomposition} converges weakly to a centered normal distribution with variance $\tau^2 $. The   term  $T_{2N} $ is the bias and asymptotically vanishes (see discussion of Assumption \ref{ass31}(7)). 
The third term $T_{3N} $
 is also deterministic. In the \textit{interior} of the null hypothesis, that is $  \vvvert S-S_0\vvvert^2 < \Delta$, 
  it converges to $-\infty$ and thus asymptotically no rejection occurs for $N \to \infty$. On the \textit{ boundary }  of the hypothesis, that is   $  \vvvert S-S_0\vvvert^2 = \Delta$,
   it vanishes and we get $ \sqrt{N} \big( \vvvert \hat S_N-S_0 \hat \Pi_{ k } \vvvert ^2 - \Delta \big) = T_{1N}  + o_\PP(1)  \to \mathcal{N}(0, \tau^2)$.  Consequently,  the test \eqref{primitive_test} 
has   asymptotic level $\alpha$ in this case.  Notice that the bias $T_{2N} $ is always non-positive which means that small choices of $k$ (resulting in larger bias) invariably make the test more conservative. 
Finally,   under the alternative the   term  $T_{3N} $ diverges to $\infty$ and thus rejection occurs with probability 
converging  to $1$ (asymptotic consistency). In the following remark we briefly explain how the decomposition \eqref{decomposition} can be used for a more
refined analysis with respect to  local alternatives.

\begin{rem} \label{remark_local_alternatives}

{\rm 
Consider the local alternative of $S= \tilde S +c H/\sqrt{N}$, where both $H, \tilde S \in \mathcal{C}(\beta,\Gamma)$ (see Assumption \ref{ass31}(1)) are operators, $c>0$ is a scaling factor and   $\Delta = \vvvert \tilde{S} - S_0 \vvvert ^2$ demarcates the boundary of the hypothesis. Furthermore assume that $\langle \tilde S-S_0, H \rangle =: \Lambda >0$. The last requirement is necessary, such that we are indeed under the alternative ($\Lambda < 0$ corresponds to the hypothesis) and thus
$$
 \vvvert S- S_0 \vvvert ^2 =  \vvvert \tilde{S} - S_0 \vvvert ^2 + 2 c \langle H, \tilde S-S_0 \rangle /\sqrt{N} +c^2\vvvert H\vvvert ^2/N = \Delta + 2 c \Lambda /\sqrt{N} + \mathcal{O}(1/N) >\Delta.
$$
Now suppose that the Assumptions of Proposition~\ref{proposition_1} hold. We apply the test, defined in \eqref{primitive_test} in this situation and let 
$
p_{H_1^\Delta}(c) 
$
denote the probability of rejection. It then follows that
$$ \lim_{N \to \infty}  p_{H_1^\Delta}(c)> \alpha \quad \Leftrightarrow \quad c>0 \quad \quad  \textnormal{and} \quad\quad \lim_{c \to \infty} \lim_{N \to \infty} p_{H_1^\Delta}(c) = 1.$$
Both results follow from the decomposition \eqref{decomposition}. It is not difficult to show that $T_{1N}$ converges to the same normal distribution as in the case of $S=\tilde S$, that $T_{2N}$ asymptotically vanishes and that $T_{3N} = 2 c \Lambda +o(1)>0$ is non-vanishing and (asymptotically) scales linearly with $c$. Consequently the test in \eqref{primitive_test} is able to detect local alternatives converging to the null hypothesis 
at a rate of $1/\sqrt{N}$.
We also point out that all subsequently presented, self-normalized tests directly inherit this property, as the numerator of the normalized statistic can be decomposed as in \eqref{decomposition}. 
}
\end{rem}

Note that the test  \eqref{primitive_test} provides an attractive decision rule for the hypothesis~\eqref{relevant_difference} supposing that a reliable
estimate of the variance $\tau^2$  is available.  Unfortunately, even in the case of independent observations this quantity  is painfully complex to estimate. It requires not only estimation of $\Gamma$, but also of the fourth order structure of regressors and errors, a linearization map $L$ depending on all eigenvalues and eigenfunctions of the operator $\Gamma$ (an object which depends inversely on the small eigengaps) as well as knowledge about the operators 
$R$ and $R_0$ in Assumption~\ref{ass31}(1). What is difficult for i.i.d.\  data is almost infeasible  in the case of dependent data. In this case $\tau^2$ is a long-run variance, which requires besides the estimation of all the  mentioned  entities the determination of a bandwidth, capturing the sequential dependence of the regressors and errors. \\
Given the impracticality and instability of such an estimate we pursue the different approach of self-normalization in the following  section. The technical prerequisite for this procedure is the derivation of a weak invariance principle, generalizing Proposition~\ref{proposition_1}. For this purpose we introduce a sequential version
of the statistic $\hat S_N$ which  is defined similarly as the original, with the difference that - instead of all $N$ observations - 
  only the observations   $(X_{1},Y_{1}), \ldots , (X_{\lfloor \FP N \rfloor},Y_{\lfloor \FP N \rfloor})$ for $\FP \in (0,1]$ are used for  estimation.
To be precise we define the sequential covariance estimator
\begin{equation} \label{Eq_DefSeqCovOp}
        \hat \Gamma_N[\FP]
    =  
        \frac{1}{N} \sum_{n=1}^{\lfloor \FP N \rfloor} X_n \otimes X_n.
\end{equation}
Furthermore, we define the sequential estimators of the eigenvalues and eigenfunctions of $\Gamma$, denoted by $\hat{\lambda}_i[\FP], \hat e_i[\FP]$ as the eigenvalues and eigenfunctions of the 
operator $\hat \Gamma_N[\FP]$ (where the eigenvalues are again assumed to be in non-increasing order). With these estimators we set
\begin{equation}  \label{Eq_DefSeqPseudoInvs-and-SeqProj}
	    \hat \Gamma^\dagger_k[\FP]
	= 
	    \sum_{i=1}^k \frac{1}{\hat{\lambda}_i[\FP]} 
	    \hat e_i[\FP] \otimes \hat e_i[\FP] 
    \qquad   
        \textnormal{ and }  
    \qquad 
        \hat \Pi_{ k }[\FP]
    = 
        \sum_{i=1}^k \hat e_i[\FP] \otimes \hat e_i[\FP].
\end{equation}
Finally the sequential estimator of $S$ is given by
\begin{equation} \label{App_ref_3}
        \hat{S}_N [\FP]
    :=  
        \frac{1}{N} \sum_{n=1}^{\lfloor \FP N \rfloor} Y_n \otimes X_n \hat \Gamma^\dagger_k[\FP].
\end{equation}
Note that in the case of $\FP=1$ these estimators are identical to their non-sequential counterparts 
$\hat{\Gamma}_N $, $\hat \Gamma^\dagger_k$ and  $ \hat{S}_N $  defined in \eqref{h2}, \eqref{h3}  and \eqref{h4}, respectively, and that we do not adapt $k=k(N)$ to $\FP$. 
Throughout this paper we will  use   the notations $\hat \Gamma_N$  and   $\hat \Gamma_N [1]$  simultaneously.
We can now state the weak invariance principle generalizing Proposition~\ref{proposition_1}.

\begin{theo} \label{theorem_1}
Under our Assumptions~\ref{ass31}, 
it holds for any compact interval  $I \subset (0,1]$ that
$$
    \left\{ 
    \sqrt{N} \FP 
    \left( 
         \vvvert 
         \hat S_N[\FP] - S_0 \hat \Pi_{ k }[\FP] 
         \vvvert ^2 
        - 
        \vvvert ( S - S_0 )\Pi_{k} \vvvert ^2   
    \right)
    \right\}_{\FP \in I} 
\stackrel{ d }{ \to } 
    \{ \tau \mathbb{B} (\FP) \}_{\FP \in I},
$$
as $N \to \infty$, where $\mathbb{B}$ is a standard Brownian motion and 
the long-run variance $\tau^2$ is defined in \eqref{def_long_run_variance}. 
\end{theo}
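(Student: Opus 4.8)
\emph{Strategy.} My plan is to lift the pointwise argument behind Proposition~\ref{proposition_1} to the level of processes: first reduce the statistic to a linear functional of $\hat S_N[\FP]-S_0\hat\Pi_{k}[\FP]-(S-S_0)\Pi_k$, then apply the \emph{smoothness shift} to recognize a centered partial-sum process indexed by $\FP$, and finally invoke a functional central limit theorem for $\phi$-mixing sequences. The restriction to a compact $I\subset(0,1]$ will be used throughout: it keeps $\lfloor\FP N\rfloor$ of order $N$ uniformly, so that the sequential eigenvalues and eigenfunctions $\hat\lambda_i[\FP],\hat e_i[\FP]$ of $\hat\Gamma_N[\FP]\approx\FP\Gamma$ obey perturbation bounds uniform in $\FP\in I$.

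\emph{Reduction to a linear term.} By the third binomial formula in $\mathcal{S}(H_1,H_2)$ I would write
$$
\sqrt N\,\FP\big(\vvvert\hat S_N[\FP]-S_0\hat\Pi_{k}[\FP]\vvvert^2-\vvvert(S-S_0)\Pi_k\vvvert^2\big)=\big\langle\sqrt N\,\FP\,D_N[\FP],\,\hat S_N[\FP]-S_0\hat\Pi_{k}[\FP]+(S-S_0)\Pi_k\big\rangle,
$$
with $D_N[\FP]:=\hat S_N[\FP]-S_0\hat\Pi_{k}[\FP]-(S-S_0)\Pi_k$. Using the exact identity $\hat S_N[\FP]=S\hat\Pi_{k}[\FP]+U_N[\FP]\hat\Gamma^\dagger_k[\FP]$, where $U_N[\FP]:=\tfrac1N\sum_{n=1}^{\lfloor\FP N\rfloor}\varepsilon_n\otimes X_n$, I would split the second factor as $2(S-S_0)+E_N[\FP]$ with $E_N[\FP]=(S-S_0)(\hat\Pi_{k}[\FP]+\Pi_k-2\,Id)+U_N[\FP]\hat\Gamma^\dagger_k[\FP]$, whose Hilbert--Schmidt norm is $o_\PP(1)$ uniformly on $I$. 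Although $\vvvert\sqrt N\,\FP\,D_N[\FP]\vvvert$ itself diverges, its pairing with $E_N[\FP]$ should still be $o_\PP(1)$ uniformly on $I$: the bias-type part is bounded by Cauchy--Schwarz using $\vvvert(S-S_0)(\Pi_k-Id)\vvvert=O(k^{-\SM\beta})=o(1/\sqrt N)$ (Assumption~\ref{ass31}(7)) together with the refined bound $\vvvert\Gamma^\beta(\hat\Pi_k[\FP]-\Pi_k)\vvvert_\mathcal{L}=o_\PP(1/\sqrt N)$, while for the variance-type part the polynomial slack $\delta>0$ in the upper rate of Assumption~\ref{ass31}(7) is exactly what forces the product $k^{\SM+1}/\sqrt N$ to vanish. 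This leaves $2\sqrt N\,\FP\,\langle D_N[\FP],S-S_0\rangle+o_\PP(1)$ uniformly on $I$.

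\emph{Smoothness shift and linearization.} Writing $S=R\Gamma^\beta$, $S_0=R_0\Gamma^\beta$ via Assumption~\ref{ass31}(1) and using self-adjointness of $\Gamma^\beta$ to move it into the left component of the inner product, the leading term becomes $2\sqrt N\,\FP\,\langle D_N[\FP]\Gamma^\beta,R-R_0\rangle$. The decisive cancellation is $\hat\Gamma^\dagger_k[\FP]\Gamma^\beta\approx\FP^{-1}\Gamma^{\beta-1}$, the $\FP^{-1}$ coming from $\hat\Gamma_N[\FP]\approx\FP\Gamma$; the outer $\FP$ in the statistic then cancels it and one is left with a partial sum. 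Concretely I would prove
$$
\sqrt N\,\FP\,D_N[\FP]\Gamma^\beta=\frac1{\sqrt N}\sum_{n=1}^{\lfloor\FP N\rfloor}\Xi_n+o_\PP(1)\quad\text{uniformly on }I,
$$
where $\Xi_n$ is a strictly stationary, centered, $\phi$-mixing sequence in $\mathcal{S}(H_1,H_2)$ built from $(R-R_0)L[X_n\otimes X_n-\Gamma]$ and $\varepsilon_n\otimes X_n\Gamma^{\beta-1}$, with $L$ the linearization map of Definition~\ref{Def_LinerazationOperat}. The remainder in this linearization would be handled by combining perturbation (Davis--Kahan-type) bounds for $\hat\lambda_i[\FP],\hat e_i[\FP]$ under the eigengap condition Assumption~\ref{ass31}(6), negligibility of the regularization bias from Assumption~\ref{ass31}(7), and maximal inequalities over $\FP\in I$ that upgrade the pointwise estimates of Lemma~\ref{Lem_Bounds-NonSmoothed-Var-and-Bias-of-S_N} to uniform ones, for which the moment bound Assumption~\ref{ass31}(2) is used.

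\emph{Functional CLT, identification of $\tau$, and the main obstacle.} Since $\langle\Xi_n,R-R_0\rangle$ is real-valued, strictly stationary, centered (Assumption~\ref{ass31}(5)), square-integrable (Assumption~\ref{ass31}(2)) and $\phi$-mixing with $\sum_h\sqrt{\phi(h)}<\infty$ (Assumption~\ref{ass31}(3)), a standard invariance principle for $\phi$-mixing sequences yields $N^{-1/2}\sum_{n=1}^{\lfloor\FP N\rfloor}\langle\Xi_n,R-R_0\rangle\Rightarrow\sigma\,\mathbb{B}(\FP)$ on $I$ with $\sigma^2=\sum_{h\in\Z}\operatorname{Cov}\big(\langle\Xi_0,R-R_0\rangle,\langle\Xi_h,R-R_0\rangle\big)$; a rearrangement of this sum using stationarity gives $4\sigma^2=\tau^2$ of \eqref{def_long_run_variance}, consistently with Proposition~\ref{proposition_1}. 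Combining with the previous steps, the process in the theorem equals $2N^{-1/2}\sum_{n=1}^{\lfloor\FP N\rfloor}\langle\Xi_n,R-R_0\rangle+o_\PP(1)$ uniformly on $I$ and thus converges weakly to $2\sigma\,\mathbb{B}=\tau\,\mathbb{B}$, tightness of the full process following from that of the partial-sum process plus the uniform $o_\PP(1)$ remainder. The hard part will be the uniform-in-$\FP$ linearization: the error-inflated part $U_N[\FP]\hat\Gamma^\dagger_k[\FP]$ of $D_N[\FP]$ has Hilbert--Schmidt norm of order $k^{(\SM+1)/2}/\sqrt N$, which \emph{diverges} after multiplication by $\sqrt N\,\FP$, and one must show that the smoothness shift (right-multiplication by $\Gamma^\beta$) followed by summation against $R-R_0$ neutralizes this blow-up simultaneously for every $\FP\in I$. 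This forces both the maximal-inequality sharpening of Lemma~\ref{Lem_Bounds-NonSmoothed-Var-and-Bias-of-S_N} and a uniform control of the eigenprojection perturbations $\hat\Pi_k[\FP]-\Pi_k$, which are noisiest near the left endpoint of $I$ --- the reason $I$ must stay away from $0$ --- and it is here that the slack $\delta>0$ in Assumption~\ref{ass31}(7) is consumed.
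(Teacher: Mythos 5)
Your proposal is correct and follows essentially the same route as the paper's proof: the third-binomial expansion with the quadratic and bias remainders controlled exactly by the rates $k^{(\SM+1)/2}/\sqrt N$ and $k^{-\SM\beta}$ from Assumption~\ref{ass31}(7) (the slack $\delta$ being consumed there), the smoothness shift $S-S_0=(R-R_0)\Gamma^\beta$ with the cancellation $\hat\Gamma^\dagger_k[\FP]\Gamma^\beta\approx\FP^{-1}\Gamma^{\beta-1}$, the linearization of $\FP\Gamma^\beta(\hat\Pi_k[\FP]-\Pi_k)\Gamma^\beta$ by $L(\hat\Gamma_N[\FP]-\FP\Gamma)$, and a real-valued invariance principle for $\phi$-mixing partial sums (the paper uses Herrndorf's Corollary~2.6, with the uniform-in-$\FP$ remainder control done via Lemmas~\ref{Lem_Bounds-NonSmoothed-Var-and-Bias-of-S_N}--\ref{Lem_FirstTechLem-for-Step1-AsympFormVar} and a M\'oricz-type maximal inequality, i.e.\ precisely the maximal-inequality sharpening you defer to). The technical details you leave open are exactly those the paper's appendix supplies, so no substantive gap or divergence in method.
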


\subsection{A pivotal test statistic } \label{Subsection_test_one_sample}


In the last section we have derived a weak invariance principle for the estimated deviation measure in \eqref{functional_convergence}. While a central limit theorem is theoretically sufficient to construct a test for the hypothesis \eqref{relevant_difference}, as we have seen in the discussion of Proposition \ref{proposition_1}, the estimation of the long-run variance $\tau^2$ is infeasible in applications. 
 In this section we circumvent the problem of estimating  $\tau^2$, by a self-normalization approach, based on the weak invariance principle in Theorem \ref{theorem_1}. For this purpose,   define for 
 $0 < a<1$  the interval  $I = [a, 1]$, let  $\nu$ be a probability measure on $I$  and consider the normalizer
\begin{equation} \label{denominator}
    \hat V_N 
:= 
    \left\{ \int_I \FP^4 
        \left(  \vvvert \hat S_N[\FP]-S_0 \hat \Pi_{ k }[\FP] \vvvert ^2 
        -  
        \vvvert \hat S_N-S_0 \hat \Pi_{ k } \vvvert ^2 \right)^2 d\nu(\FP)  
    \right\}^{1/2}.
\end{equation}
The next corollary is a consequence of Theorem~\ref{theorem_1} and the continuous mapping Theorem. It can be viewed as a standardized version of Proposition~\ref{proposition_1}.

\begin{cor} \label{corollary_2}
Suppose that the assumptions of Theorem~\ref{theorem_1} hold and that $\tau>0$. Then the weak convergence 
\begin{align}
    \label{hd5}
    \frac{ 
    \left( 
    \vvvert \hat S_N-S_0 \hat \Pi_{ k } \vvvert ^2 
    - 
    \vvvert ( S-S_0 ) \Pi_{ k } \vvvert ^2  
    \right) 
    }{
    \hat V_N
    } 
\stackrel{ d }{ \to } 
    W 
:= 
    \frac{
        \mathbb{B}(1)
    }{
        \left\{\int_I \FP^2 
        ( \mathbb{B}(\FP)-\FP \mathbb{B}(1))^2 d \nu(\FP) \right\}^{1/2}
    }
\end{align}
holds, where $\mathbb{B}$ is a standard Brownian motion  on the interval $[0,1]$. 
\end{cor}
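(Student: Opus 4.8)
The plan is to reduce the whole statement to a single application of the continuous mapping theorem to the sequential process in Theorem~\ref{theorem_1}. Write
\[
G_N(\FP) := \sqrt{N}\,\FP\left(\vvvert \hat S_N[\FP]-S_0\hat\Pi_{k}[\FP]\vvvert^2 - \vvvert(S-S_0)\Pi_{k}\vvvert^2\right), \qquad \FP\in I,
\]
so that Theorem~\ref{theorem_1} reads $\{G_N(\FP)\}_{\FP\in I}\stackrel{d}{\to}\{\tau\mathbb{B}(\FP)\}_{\FP\in I}$ in $\ell^\infty(I)$ (the limit having continuous paths, the mode of convergence is unambiguous). Since $\hat S_N[1]=\hat S_N$ and $\hat\Pi_{k}[1]=\hat\Pi_{k}$, the numerator in \eqref{hd5} is exactly $G_N(1)/\sqrt{N}$.

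Next I would express $\hat V_N$ through $G_N$ as well. For $\FP\in I$ the bias term $\vvvert(S-S_0)\Pi_{k}\vvvert^2$ cancels and one gets
\[
\vvvert\hat S_N[\FP]-S_0\hat\Pi_{k}[\FP]\vvvert^2-\vvvert\hat S_N-S_0\hat\Pi_{k}\vvvert^2
=\frac{G_N(\FP)}{\sqrt{N}\,\FP}-\frac{G_N(1)}{\sqrt{N}}
=\frac{1}{\sqrt{N}\,\FP}\bigl(G_N(\FP)-\FP\,G_N(1)\bigr).
\]
Squaring and multiplying by $\FP^4$ turns the integrand of $\hat V_N^2$ into $\FP^{2}\bigl(G_N(\FP)-\FP G_N(1)\bigr)^2/N$, whence
\[
\hat V_N=\frac{1}{\sqrt{N}}\left\{\int_I\FP^2\bigl(G_N(\FP)-\FP G_N(1)\bigr)^2\,d\nu(\FP)\right\}^{1/2},
\]
and the ratio in \eqref{hd5} simplifies, the factors $1/\sqrt{N}$ cancelling, to
\[
\frac{G_N(1)}{\left\{\int_I\FP^2\bigl(G_N(\FP)-\FP G_N(1)\bigr)^2\,d\nu(\FP)\right\}^{1/2}}=:\Phi(G_N).
\]

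It remains to pass to the limit via the continuous mapping theorem applied to the functional $\Phi$, defined for $f\in\ell^\infty(I)$ with $\int_I\FP^2(f(\FP)-\FP f(1))^2\,d\nu(\FP)>0$ by $\Phi(f)=f(1)\big/\{\int_I\FP^2(f(\FP)-\FP f(1))^2\,d\nu(\FP)\}^{1/2}$. Evaluation $f\mapsto f(1)$ is uniformly continuous; and if $f_n\to f$ uniformly on the compact set $I$ then $\FP^2(f_n(\FP)-\FP f_n(1))^2\to\FP^2(f(\FP)-\FP f(1))^2$ uniformly, so bounded convergence makes the denominator functional continuous too. Hence $\Phi$ is continuous at every $f$ whose denominator is strictly positive. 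Because $\tau>0$ and $\FP\mapsto\mathbb{B}(\FP)-\FP\mathbb{B}(1)$ is a Brownian bridge on $[0,1]$, which is almost surely nonzero at each fixed $\FP\in[a,1)$, the path $\tau\mathbb{B}$ lies in the continuity set of $\Phi$ with probability one (this uses that $\nu$ is not the point mass at $\FP=1$, which is anyway required for $W$ to be well defined). The continuous mapping theorem then gives $\Phi(G_N)\stackrel{d}{\to}\Phi(\tau\mathbb{B})$, and since the factor $\tau$ appears in both numerator and denominator of $\Phi(\tau\mathbb{B})$ it cancels, leaving precisely $W$. The only genuine verifications are the two continuity/non-degeneracy points just indicated; the remaining rewriting is bookkeeping, so I do not expect a serious obstacle here — the substance of the result is entirely carried by Theorem~\ref{theorem_1}.
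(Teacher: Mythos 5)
Your proposal is correct and follows exactly the route the paper takes: the paper disposes of this corollary in one line as "a consequence of Theorem~\ref{theorem_1} and the continuous mapping Theorem", and your argument simply makes that explicit — the cancellation of the bias term and of the $\sqrt{N}$ factors, the continuity of the self-normalizing functional on $\ell^\infty(I)$, and the almost sure positivity of the limiting denominator (using $\tau>0$ and $\nu\neq\delta_1$) are precisely the bookkeeping and non-degeneracy checks the paper leaves implicit.
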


We point out that the quantiles of the distribution of $W$ can be readily simulated using the Fourier representation of the Brownian motion.
A typical choice for the measure $\nu $ is  a discrete  uniform measure on the set $\{1/T,2/T,...,(T-1)/T, 1\}$ for some $T \ge 2$. 
Simulations suggest that the choice of $T$ has little impact on 
 the statistical performance of the resulting procedure, while - of course - smaller  values of $T$  yield computational advantages  (see Section \ref{sec6}). 

In view of \eqref{primitive_test} and the subsequent discussion we now define a decision rule for the hypothesis in  \eqref{relevant_difference}
 rejecting the null hypothesis in \eqref{relevant_difference}, whenever 
\begin{equation} \label{test_decision}
\hat W_N(\Delta) :=\frac{\sqrt{N} \big( \vvvert \hat S_N-S_0 \hat \Pi_{ k } \vvvert ^2 - \Delta  \big)}{\hat V_N}> q_{1-\alpha},
\end{equation}
where $q_{1-\alpha}$ is the $1-\alpha$ quantile of the distribution of $W$ in \eqref{hd5}. The next theorem shows the validity of this test decision. 

\begin{theo} \label{theorem_2}
Under the assumptions of Corollary~\ref{corollary_2} the  decision presented in \eqref{test_decision} yields an asymptotic  level-$\alpha$ and  consistent  test 
for the hypothesis \eqref{relevant_difference}.
\end{theo}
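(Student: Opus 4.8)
The proof combines the weak invariance principle of Theorem~\ref{theorem_1} with the three--term decomposition \eqref{decomposition}. Writing, as in the discussion following Proposition~\ref{proposition_1},
\[
  \hat W_N(\Delta)
  = \frac{\sqrt N\big(\vvvert \hat S_N - S_0\hat\Pi_{k}\vvvert^2 - \Delta\big)}{\hat V_N}
  = \frac{T_{1N} + T_{2N} + T_{3N}}{\sqrt N\,\hat V_N},
\]
the first task is to pin down the limit of the scaled normalizer. Since $\hat S_N[1]=\hat S_N$ and $\hat\Pi_{k}[1]=\hat\Pi_{k}$, a short computation gives $N\hat V_N^2 = \int_I \FP^2\big(G_N(\FP)-\FP\,G_N(1)\big)^2\,d\nu(\FP)$, where $G_N(\FP):=\sqrt N\,\FP\big(\vvvert \hat S_N[\FP]-S_0\hat\Pi_{k}[\FP]\vvvert^2-\vvvert(S-S_0)\Pi_{k}\vvvert^2\big)$ and $G_N(1)=T_{1N}$. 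By Theorem~\ref{theorem_1}, $G_N\stackrel{d}{\to}\tau\mathbb{B}$ in $C(I)$, so the continuous mapping theorem applied to the (sup--norm continuous) functional $f\mapsto\bigl(f(1),\{\int_I \FP^2(f(\FP)-\FP f(1))^2\,d\nu(\FP)\}^{1/2}\bigr)$ yields
\[
  \big(T_{1N},\,\sqrt N\,\hat V_N\big)\ \stackrel{d}{\to}\ \Big(\tau\mathbb{B}(1),\ \tau\big\{\textstyle\int_I \FP^2(\mathbb{B}(\FP)-\FP\mathbb{B}(1))^2\,d\nu(\FP)\big\}^{1/2}\Big).
\]
Since $\tau>0$ and the Brownian bridge $\FP\mapsto\mathbb{B}(\FP)-\FP\mathbb{B}(1)$ is almost surely not identically zero on $I$, the limiting normalizer is a.s.\ strictly positive; in particular $\PP(\hat V_N=0)\to0$, so the test statistic is well defined with probability tending to one.

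Next I would dispose of the two deterministic terms. The bias equals $T_{2N}=-\sqrt N\,\vvvert(S-S_0)(\mathrm{Id}-\Pi_{k})\vvvert^2$ (the cross term vanishes by orthogonality of $\Pi_{k}$ in the Hilbert--Schmidt inner product), and, exactly as in Remark~\ref{rem1}(c), $\vvvert(S-S_0)(\mathrm{Id}-\Pi_{k})\vvvert\le\vvvert R-R_0\vvvert\,\lambda_{k+1}^{\beta}=\mathcal O(k^{-\SM\beta})$, whence $|T_{2N}|=\mathcal O(\sqrt N\,k^{-2\SM\beta})=o(1)$ under Assumption~\ref{ass31}(6)--(7). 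The term $T_{3N}=\sqrt N(\vvvert S-S_0\vvvert^2-\Delta)$ is purely deterministic: it vanishes on the boundary $\vvvert S-S_0\vvvert^2=\Delta$, tends to $-\infty$ on the interior $\vvvert S-S_0\vvvert^2<\Delta$ of $H_0^\Delta$, and tends to $+\infty$ under $H_1^\Delta$.

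Assembling the pieces yields the three required statements. On the boundary, $\hat W_N(\Delta)=(T_{1N}+o(1))/(\sqrt N\hat V_N)$, and by the joint convergence above and Slutsky's lemma $\hat W_N(\Delta)\stackrel{d}{\to}W=\mathbb{B}(1)/\{\int_I \FP^2(\mathbb{B}(\FP)-\FP\mathbb{B}(1))^2\,d\nu(\FP)\}^{1/2}$, the pivot of Corollary~\ref{corollary_2}. As the Brownian bridge is independent of $\mathbb{B}(1)\sim\mathcal N(0,1)$, the variable $W$ has a continuous distribution function, hence $\PP(\hat W_N(\Delta)>q_{1-\alpha})\to\PP(W>q_{1-\alpha})=\alpha$. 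On the interior of $H_0^\Delta$ I write $\hat W_N(\Delta)=T_{3N}/(\sqrt N\hat V_N)+(T_{1N}+T_{2N})/(\sqrt N\hat V_N)$; the second summand is $O_\PP(1)$ while $\sqrt N\hat V_N$ is tight with an a.s.\ positive limit and $T_{3N}\to-\infty$ deterministically, so $\hat W_N(\Delta)\to-\infty$ in probability and $\PP(\text{reject})\to0$. Taking the supremum over $H_0^\Delta$ shows the test has asymptotic level $\alpha$. Under $H_1^\Delta$ the identical splitting with $T_{3N}\to+\infty$ gives $\hat W_N(\Delta)\to+\infty$ in probability, i.e.\ $\PP(\text{reject})\to1$, which is consistency.

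The only genuine work beyond this bookkeeping concerns the normalizer $\hat V_N$: one must establish the joint weak convergence of $(T_{1N},\sqrt N\hat V_N)$ and, crucially, that its limiting normalizer is a.s.\ strictly positive. This positivity is exactly what makes the ratio well defined, makes the pivotal limit $W$ non-degenerate with a continuous law (so that $q_{1-\alpha}$ is a continuity point), and lets the diverging deterministic term $T_{3N}$ dominate both in the interior of the null and under the alternative. All the needed ingredients are supplied by Theorem~\ref{theorem_1} and Corollary~\ref{corollary_2}; the remainder is a routine appeal to the continuous mapping theorem, Slutsky's lemma, and the sign analysis of $T_{3N}$ already carried out for the oracle test \eqref{primitive_test}.
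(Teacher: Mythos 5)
Your proposal is correct and follows essentially the same route as the paper: the decomposition into $T_{1N}+T_{2N}+T_{3N}$, the bias bound from Assumption~\ref{ass31}(7), the sign analysis of $T_{3N}$, and the treatment of the normalizer via Theorem~\ref{theorem_1}, the continuous mapping theorem and Slutsky (i.e.\ Corollary~\ref{corollary_2}), which is exactly how the paper disposes of this result. The only cosmetic imprecision is invoking convergence ``in $C(I)$'' for the sequential process, which has jumps; the argument goes through unchanged with weak convergence in $\ell^\infty(I)$ and sup-norm continuity of the functional.
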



\begin{rem} \label{remark_centering} ~~
\smallskip

{\rm 
(1)  In the theoretical results presented so far it is  assumed that the regressors are centered, that is    $\E X_1 =0$. In reality it may well be that $\E X_1 = \mu \neq 0$ and therefore an empirical centering is necessary. More precisely we can introduce the sequential mean estimates (recall that $\FP \in \left[ a,1 \right]$, $0 < a < 1$) 
$$
        \bar Y_N[\FP] 
    :=  
        \frac{1}{ \lfloor N \FP \rfloor } \sum_{n=1}^{\lfloor N \FP \rfloor} Y_n, 
    \quad 
        \textnormal{and} 
    \quad 
        \bar X_N[\FP] 
    :=  
        \frac{1}{\lfloor N \FP \rfloor} \sum_{n=1}^{\lfloor N \FP \rfloor} X_n,
$$
and consider the modified observations $Y_n-\bar Y_N[\FP]$ and $ X_n- \bar X_N[\FP] $ in any of the sequential statistics introduced at the beginning of this section. It can be shown that all results
presented so far remain correct in this  case (we also employ this empirical centering in the simulation study in Section~\ref{sec6}). 
\smallskip

(2)  It follows from the proof of Theorem~\ref{theorem_2} that 
$
\lim_{N \to \infty }
\mathbb{P}\big( \hat W_N(\Delta)  > q_{1-\alpha}\big) 
= 0
$
if $ \vvvert S-S_0\vvvert ^2 < \Delta $ (interior of the null hypothesis), while for 
$ \vvvert S-S_0\vvvert ^2 = \Delta $ 
(boundary of the null hypothesis)
we have 
$\lim_{N \to \infty }
\mathbb{P}\big( \hat W_N(\Delta)  > q_{1-\alpha}\big) 
=
\alpha $. 
\smallskip

(3) It is easy to see that the test statistic $\hat W_N(\Delta)$ is a  decreasing function of the threshold $\Delta$. This means that rejection for some $\Delta> 0$ also entails rejection for all smaller thresholds and vice versa accepting the hypothesis for some threshold means acceptance for all larger values. Hence the interpretation for multiple values of $\Delta$ - if considered - is internally consistent. 
\smallskip

(4) Similar results can also be obtained for other dependence concepts than 
$\phi-$mixing. 
For example,  consider  $\alpha$-mixing  processes \citep[for a definition, see, for instance][]{dehling} and assume
\begin{align*}
    (3'): &  ~~~~\text{The sequence ~} \{(X_n, \varepsilon_n)\}_{n \in \Z} ~\text{ is strictly stationary and~} \alpha \text{-mixing s.t. ~} \sum_{h \ge 1} h^{4/\kappa}\alpha(h)<\infty\\
(4'):&  ~~~~  \text{If } c \text{ is the smallest even integer } c>4+ \kappa , \text{ then } ~~
\E | \langle X_1, e_j \rangle|^c \le C (\E | \langle X_1, e_j \rangle|^2)^{c/2}   ~~~~.
\end{align*}
respectively, where $\alpha(h)$ denotes the $\alpha$-mixing coefficient.
Then all  statements in this and the subsequent sections remain correct if the conditions $(3)$ and $(4)$ in  Assumption~\ref{ass31} are replaced by $(3')$ and $(4')$, respectively.  For technical details we refer the interested reader to \cite{dehling} (covariance inequalities for $\alpha-$mixing in Hilbert spaces) and to \cite{merlevede} (invariance principles under $\alpha$-mixing).  
}
\end{rem}

\section{Statistical inference for relevant prediction errors} \label{sec4}

In the previous section we have compared the slope operator $S$ to a predetermined operator $S_0$, in terms of the Hilbert--Schmidt norm $\vvvert S-S_0 \vvvert^2$. However, from a statistical perspective other deviation measures are at least equally important.  One vital mode of comparison is, in how far the predictions  of the two operators differ,
which we discuss in this section. Prediction in finite and infinite dimensional linear models is a well-investigated subject. In the work most closely related to our own, \cite{crambes2013} considered the minimax prediction error of the spectral cut-off estimator $\hat S_N$, compared to the true slope $S$. The focus in our work is different, as we want to compare the predictive properties of the true slope $S$, with the hypothesized operator $S_0$.  More specifically we are interested in the quantity $\E \| S X-S_0 X \|^2$, where the expectation is taken with respect to a regressor $X$, distributed  as $X_1$. A simple calculation, using the trace representation of inner products and its properties (see Section~13.5 in \cite{HorKokBook12})  shows that
\begin{equation}\label{h6} 
    \E \| S X-S_0 X \|^2 
= 
    \vvvert S \Gamma^{1/2}-S_0  \Gamma^{1/2}\vvvert^2.
\end{equation} 
Therefore we are comparing smoothed versions of the slope operators.
We point out that even though the inequality
$$
\vvvert S \Gamma^{1/2}-S_0  \Gamma^{1/2}\vvvert^2 \le  \vvvert \Gamma^{1/2} \vvvert_\mathcal{L}^2 \vvvert S-S_0 \vvvert^2
$$
implies that small differences between $S$ and $S_0$ result in small prediction errors, the converse is not true. In particular small prediction errors may be found in operators, that vastly differ in  the  Hilbert--Schmidt norm. 

We now formulate the hypothesis of no relevant prediction error  as
\begin{equation} \label{relevant_prediction_error}
{}^{\textnormal{pred}}H_0^\Delta: \E\| SX-S_0X\| ^2 \le \Delta \quad \quad {}^{\textnormal{pred}}H_1^{\Delta}: \E \| SX-S_0X\| ^2 > \Delta,
\end{equation}
where $X$ has the same distribution as $X_1$.
Again $\Delta>0$ is a user determined threshold, where a deviation of more than $\Delta$ is considered scientifically relevant. In order to test this hypothesis we recall the identity 
\eqref{h6}
 which suggests the natural estimator $\vvvert \hat S_N \hat \Gamma_N^{1/2}-S_0 \hat \Pi_{ k } \hat  \Gamma_N^{1/2}\vvvert^2$ for the prediction error. Recall that the projection $S_0 \hat \Pi_{ k }$ can be replaced by the  operator $S_0$, but projecting seems more sensible, because otherwise $S_0$ is compared to $S$ along axes, which are not estimated. Compared to the statistic discussed in Section \ref{sec3}, we expect that the multiplication with $\hat \Gamma_N^{1/2}$
leads to an even stronger smoothing effect, which indeed manifests itself in  weaker assumptions on the regularization parameter. 
 
\begin{assumption}  \label{ass41} \textit{Rates of regularization:} 
 {\rm The regularization parameter $k$  satisfies  for some $\delta>0$
$$
\frac{k^{\SM+1+\delta}}{\sqrt{N}} \to 0 \quad \quad \textnormal{and} \quad \quad \frac{k^{\SM (\beta+1/2)}}{\sqrt{N}} \to \infty.
$$
} 
\end{assumption}

If Assumption~\ref{ass41} holds, the bias of the prediction error vanishes asymptotically, as
\begin{eqnarray*} 
	\vvvert [S-S\Pi_{ k }]\Gamma^{1/2}\vvvert  &= &\vvvert R \Gamma^\beta [I-\Pi_{ k }]\Gamma^{1/2}\vvvert \le \vvvert R\vvvert  \vvvert  \Gamma^\beta [I-\Pi_{ k }]\Gamma^{1/2}\vvvert _\mathcal{L} \\
	&= & \vvvert R\vvvert   \lambda_{k+1}^{\beta+1/2} =\mathcal{O}(k^{-\SM (\beta+1/2)})= o(1/\sqrt{N}).
\end{eqnarray*}
Notice that compared to Assumption \ref{ass31}(7),  Assumption \ref{ass41} translates into weaker smoothness requirements for  the operators $S$ and $S_0$. 
In fact it implies $\beta>1/2 +1/\SM$ (instead of $\beta>1+1/\SM$, because $S \in \mathcal{C}(\beta - 1/2, \Gamma)$ already entails $S \Gamma^{1/2} \in \mathcal{C}(\beta, \Gamma)$).  In applications this effect is reflected by smaller values of  $k$ in the spectral cut-off  estimator  for  prediction compared to reconstruction. 
Nevertheless the representation 
$$
    \hat S_N \hat \Gamma_N^{1/2} 
= 
    S \hat \Pi_{ k } \hat  \Gamma_N^{1/2}
    + 
    U_N (\hat \Gamma^\dagger_k)^{1/2},
$$
suggests, that inference for the prediction error remains a  genuinely inverse problem.
In particular we still observe an amplification of the observation error $U_N$  by the regularized inverse,  but to a weaker extend than in the case of reconstruction. 


Recalling  the definition of the sequential estimators \eqref{Eq_DefSeqCovOp}, \eqref{Eq_DefSeqPseudoInvs-and-SeqProj} and \eqref{App_ref_3}  in Section~\ref{Subsection_weak_convergence}
we obtain  the following invariance principle. 

\begin{theo} \label{theorem_3}
Under the Assumptions~\ref{ass31}(1)-(6) and
Assumption~\ref{ass41}, it holds for any compact interval $I \subset (0,1]$, that
$$
    \left\{ \sqrt{N} \FP 
    \left( 
        \vvvert 
        \hat S_N[\FP] \hat \Gamma_N [\FP]^{1/2} 
        - 
        S_0 \hat \Pi_{ k }[\FP] \hat \Gamma_N [\FP]^{ 1 / 2 } 
        \vvvert ^2 
        - 
        \vvvert ( S - S_0 )\Pi_{k} \Gamma^{1/2} \vvvert ^2
    \right)
    \right\}_{\FP \in I} 
\stackrel{ d }{ \to } 
    \{ \tau^{\rm pred} \mathbb{B} (\FP) \}_{\FP \in I}, 
$$
where the long-run variance $(\tau^{\rm pred})^2 $ 
is defined as follows
\begin{align} \label{def_long_run_variance_pred}
    (\tau^{\rm pred})^2 
:=&
    4 \Big\{ \sum_{h \in \mathbb{Z}} \mathbb{E} \Big[ \langle (R-R_0) \tilde L[X_0 \otimes X_0 -\Gamma], R-R_0 \rangle \langle (R-R_0) \tilde L[X_h \otimes X_h -\Gamma], R-R_0 \rangle \Big]
\\
& 
    \quad \, +  2 \mathbb{E} \Big[\langle (R-R_0) \tilde L[X_0 \otimes X_0 -\Gamma], R-R_0 \rangle \langle \varepsilon_h \otimes X_h \Gamma^{\beta-1}, R-R_0 \rangle \Big] \nonumber
\\
& \quad \, \, \, \, +  \mathbb{E} \Big[ \langle \varepsilon_0 \otimes X_0 \Gamma^{\beta-1}, R-R_0 \rangle\langle \varepsilon_h \otimes X_h \Gamma^{\beta-1}, R-R_0 \rangle \Big] \Big\}. \nonumber
\end{align}
Here the map $ \tilde L$ is given in Definition \ref{Def_LinerazationOperat}.
\end{theo}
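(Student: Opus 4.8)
The plan is to mirror the architecture of the proof of Theorem~\ref{theorem_1}, exploiting that the post-multiplication by $\hat \Gamma_N[\FP]^{1/2}$ only reinforces the smoothing mechanism and is exactly what permits the weaker regularization condition in Assumption~\ref{ass41}. Abbreviate $A_N[\FP] := \hat S_N[\FP]\hat \Gamma_N[\FP]^{1/2} - S_0\hat \Pi_{ k }[\FP]\hat \Gamma_N[\FP]^{1/2}$ and $B := (S-S_0)\Pi_{k}\Gamma^{1/2}$. The first observation is the identity
$$
\hat S_N[\FP]\hat \Gamma_N[\FP]^{1/2} = S\,\hat \Pi_{ k }[\FP]\hat \Gamma_N[\FP]^{1/2} + U_N[\FP]\big(\hat \Gamma^\dagger_k[\FP]\big)^{1/2},
\qquad U_N[\FP] := \tfrac1N\sum_{n=1}^{\lfloor \FP N\rfloor}\varepsilon_n\otimes X_n,
$$
which shows that the regularized inverse now enters only through its square root $(\hat \Gamma^\dagger_k[\FP])^{1/2}$, whose operator norm is $\hat \lambda_k[\FP]^{-1/2}\sim \lambda_k^{-1/2}$, i.e.\ of order $k^{\SM/2}$ rather than $k^{\SM}$. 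As a preliminary step I would establish, under Assumptions~\ref{ass31}(1)--(6) and \ref{ass41}, the analogues of Lemma~\ref{Lem_Bounds-NonSmoothed-Var-and-Bias-of-S_N}: uniformly in $\FP$ over the compact interval $I$, $\vvvert A_N[\FP]-(S-S_0)\Gamma^{1/2}\vvvert_\mathcal{L}=o_\PP(1)$ and $\sqrt N\FP\,\vvvert A_N[\FP]-B\vvvert = O_\PP(k^{\SM/2+\text{small}})$, the extra half power of smoothing being what keeps these errors under control.

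Next, by the third binomial formula in $\mathcal S(H_1,H_2)$,
$$
\sqrt N\FP\big(\vvvert A_N[\FP]\vvvert^2 - \vvvert B\vvvert^2\big) = \sqrt N\FP\,\big\langle A_N[\FP]-B,\ A_N[\FP]+B\big\rangle,
$$
and the preliminary bounds allow one to replace the right-hand factor by $2(S-S_0)\Gamma^{1/2}$ up to a uniform $o_\PP(1)$. Now invoke Assumption~\ref{ass31}(1), $S=R\Gamma^\beta$, $S_0=R_0\Gamma^\beta$, so that $(S-S_0)\Gamma^{1/2}=(R-R_0)\Gamma^{\beta+1/2}$, and perform the \emph{smoothness shift}: since $\Gamma^{\beta+1/2}$ is self-adjoint,
$$
2\sqrt N\FP\,\big\langle A_N[\FP]-B,\ (R-R_0)\Gamma^{\beta+1/2}\big\rangle = 2\sqrt N\FP\,\big\langle \big(A_N[\FP]-B\big)\Gamma^{\beta+1/2},\ R-R_0\big\rangle.
$$
On the left factor the cancellation $(\hat \Gamma^\dagger_k[\FP])^{1/2}\Gamma^{\beta+1/2}\approx \Gamma^{\beta}\hat \Pi_{ k }[\FP]$, valid up to spectral-perturbation remainders controlled by the eigengap bound of Assumption~\ref{ass31}(6), removes the diverging operator norm, so that after centering the summands are of constant order. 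The key linearization step then expands $(\hat \Gamma^\dagger_k[\FP])^{1/2}$, the empirical eigenfunctions $\hat e_i[\FP]$ appearing in $S_0\hat \Pi_{ k }[\FP]$, and additionally the operator square root $\hat \Gamma_N[\FP]^{1/2}$, each around its population counterpart, keeping only the part linear in $\hat \Gamma_N[\FP]-\Gamma$; it is precisely the presence of this third linearization (of the square root) that produces the modified bounded linear map $\tilde L$ of Definition~\ref{Def_LinerazationOperat} in place of $L$. Collecting all surviving contributions yields, uniformly in $\FP\in I$,
$$
\sqrt N\FP\big(\vvvert A_N[\FP]\vvvert^2 - \vvvert B\vvvert^2\big) = \frac1{\sqrt N}\sum_{n=1}^{\lfloor \FP N\rfloor}\zeta_n + o_\PP(1),
\qquad \zeta_n := 2\big\langle (R-R_0)\tilde L[X_n\otimes X_n - \Gamma],\ R-R_0\big\rangle + 2\big\langle \varepsilon_n\otimes X_n\Gamma^{\beta-1},\ R-R_0\big\rangle,
$$
where $\zeta_n$ is a measurable function of $(X_n,\varepsilon_n)$, hence strictly stationary and $\phi$-mixing by Assumption~\ref{ass31}(3); it is centered by Assumption~\ref{ass31}(5), has $\E|\zeta_1|^{2+\kappa/2}<\infty$ by Assumption~\ref{ass31}(2),(4) (it is quadratic in $X_n$), and its long-run variance $\sum_{h\in\Z}\Cov(\zeta_0,\zeta_h)$ equals $(\tau^{\rm pred})^2$ of \eqref{def_long_run_variance_pred}.

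It then remains to apply the functional central limit theorem for $\phi$-mixing real sequences already used for Theorem~\ref{theorem_1} (which requires $\sum_{h\ge1}\sqrt{\phi(h)}<\infty$ and the moment bound above), giving $\{N^{-1/2}\sum_{n\le\FP N}\zeta_n\}_{\FP\in I}\stackrel{d}{\to}\{\tau^{\rm pred}\,\mathbb{B}(\FP)\}_{\FP\in I}$, and to combine this with the uniform $o_\PP(1)$ remainder via Slutsky's lemma in $C(I)$.

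The main obstacle, more delicate here than in Theorem~\ref{theorem_1}, is the uniform-in-$\FP$ perturbation analysis underlying the linearization, and in particular the control of the increments of the operator square root $\hat \Gamma_N[\FP]^{1/2}-\Gamma^{1/2}$ and of the truncated inverse square root $(\hat \Gamma^\dagger_k[\FP])^{1/2}$. The map $T\mapsto T^{1/2}$ on nonnegative operators is operator-monotone but not Lipschitz in operator norm, so its increments must be handled through the integral representation $T^{1/2}=\pi^{-1}\int_0^\infty s^{-1/2}T(T+s\,\text{Id})^{-1}\,ds$ (equivalently via holomorphic functional calculus), together with the eigengap lower bound of Assumption~\ref{ass31}(6) to convert spectral fluctuations into norm bounds; all of this has to hold uniformly over $\FP\in I$ despite the truncation index $\lfloor \FP N\rfloor$ varying with $\FP$, and must be sharp enough that, after the twofold smoothing, the residual is $o_\PP(N^{-1/2})$ — which is exactly the slack that $k^{\SM+1+\delta}/\sqrt N\to0$ in Assumption~\ref{ass41} is designed to provide. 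Once these uniform bounds are in place, the remaining bookkeeping of cross terms is routine and parallels the proof of Theorem~\ref{theorem_1}.
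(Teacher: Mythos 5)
You follow essentially the route the paper itself prescribes: the Appendix proves only Theorem~\ref{theorem_1} in detail and declares the results of Sections~\ref{sec4} and \ref{sec5} to be straightforward modifications, and your skeleton (the binomial identity, replacement of the second factor of the inner product, the smoothness shift with $(S-S_0)\Gamma^{1/2}=(R-R_0)\Gamma^{\beta+1/2}$, a linearization producing $\tilde L$, and Herrndorf's invariance principle for $\phi$-mixing sequences) is exactly that modification. Where you genuinely depart from the paper is your closing paragraph: you single out the uniform control of $\hat\Gamma_N[\FP]^{1/2}-\Gamma^{1/2}$ as the main obstacle and propose attacking it through the integral representation of the operator square root (holomorphic functional calculus). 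This is unnecessary, and your own first identity shows why: since $\hat\Gamma_N[\FP]^{1/2}$, $\hat\Gamma^\dagger_k[\FP]$ and $\hat\Pi_{k}[\FP]$ are simultaneously diagonalized by the empirical eigenpairs, the statistic involves the square root only through $\hat\Pi_{k}[\FP]\hat\Gamma_N[\FP]^{1/2}=\sum_{i\le k}\hat\lambda_i[\FP]^{1/2}\,\hat e_i[\FP]\otimes\hat e_i[\FP]$ and $(\hat\Gamma^\dagger_k[\FP])^{1/2}$, i.e.\ through the first $k$ empirical eigenvalues (raised to half powers) and eigenfunctions. Hence the whole perturbation analysis reduces to half-power analogues of Lemmas~\ref{Lem_Bounds-NonSmoothed-Var-and-Bias-of-S_N}, \ref{Lem_Projection-Statistics-Smoothed-Right-and-Left}, \ref{Lem_FirstTechLem-for-Step1-AsympFormVar} and \ref{Lem_Bound-Ratio-Eigenvalues}; this is precisely what makes the modification ``straightforward'' and why the weaker rate in Assumption~\ref{ass41} suffices. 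Introducing a general square-root calculus would only weaken the bounds (the map $T\mapsto T^{1/2}$ is not Lipschitz near zero, so a direct estimate of $\vvvert \hat\Gamma_N[\FP]^{1/2}-(\FP\Gamma)^{1/2}\vvvert_\mathcal{L}$ is of order $N^{-1/4}$ and beside the point).

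One step you state too casually is the $\FP$-bookkeeping. In Theorem~\ref{theorem_1} the factor $\FP$ coming from $\frac1N\sum_{n\le \FP N}Y_n\otimes X_n\approx \FP S\Gamma$ is exactly compensated by $\hat\Gamma^\dagger_k[\FP]\approx \FP^{-1}\Gamma^\dagger_k$, so no deterministic $\FP$-powers survive. Here the extra factor $\hat\Gamma_N[\FP]^{1/2}\approx \FP^{1/2}\Gamma^{1/2}$ contributes powers of $\FP^{1/2}$ that do not cancel in the same way (for instance $\FP\,(\hat\Gamma^\dagger_k[\FP])^{1/2}\Gamma^{\beta+1/2}\approx \FP^{1/2}\Gamma^{\beta}\Pi_{k}$), so your blanket claims that $A_N[\FP]+B$ may be replaced by $2(S-S_0)\Gamma^{1/2}$ and that $\sqrt N\FP\,\vvvert A_N[\FP]-B\vvvert$ is uniformly $\mathcal{O}_\PP(k^{\SM/2+\epsilon}/\sqrt N)$ cannot be taken literally for $\FP<1$: a deterministic term of order $\sqrt N(1-\FP^{1/2})\vvvert(S-S_0)\Pi_k\Gamma^{1/2}\vvvert$ appears and must be absorbed by tracking the $\FP$-powers explicitly in the centering and in the linearization before the mixing FCLT is applied. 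This bookkeeping, not the operator square root, is the only place where the prediction case differs materially from the proof of Theorem~\ref{theorem_1}, and it is the part your proposal leaves unaddressed.
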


Next we define the adapted denominator 
\begin{equation} \label{denominator_2}
    \hat V_N^{\rm pred} 
:= 
    \Big\{ \int_I \FP^4 \Big(  \vvvert \hat S_N[\FP]\hat \Gamma_N [\FP]^{1/2}-S_0 \hat \Pi_{ k }[\FP]\hat \Gamma_N [\FP]^{1/2} \vvvert ^2 - \vvvert \hat S_N\hat \Gamma_N ^{1/2}-S_0 \hat \Pi_{ k } \hat \Gamma_N ^{1/2}\vvvert ^2 \Big)^2 d\nu(\FP)  \Big\}^{1/2},
\end{equation}
and propose  to reject the null hypothesis in \eqref{relevant_prediction_error}, if
\begin{equation} \label{test_decision_2}
\hat W_N^{\rm pred}(\Delta) :=\frac{\sqrt{N} \big( \vvvert \hat S_N \hat \Gamma_N ^{1/2}-S_0 \hat \Pi_{ k } \hat \Gamma_N ^{1/2} \vvvert ^2 - \Delta  \big)}{\hat V_N^{\rm pred}}> q_{1-\alpha}.
\end{equation}

\begin{theo} \label{theorem_4}
Suppose that the Assumptions~\ref{ass31}(1)-(6), Assumption~\ref{ass41} hold and that the long-run variance $\tau^{\rm pred}$ is positive. Then the decision rule in \eqref{test_decision_2} defines  a consistent, asymptotic level-$\alpha$ test for the hypothesis in \eqref{relevant_prediction_error}.
\end{theo}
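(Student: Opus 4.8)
The plan is to follow the blueprint of the proof of Theorem~\ref{theorem_2}, with the invariance principle of Theorem~\ref{theorem_1} replaced by that of Theorem~\ref{theorem_3}: first derive a self-normalized limit (the prediction analogue of Corollary~\ref{corollary_2}), then split the numerator of $\hat W_N^{\rm pred}(\Delta)$ into a stochastic, a bias and a deterministic part, and finally run a three-case analysis over the interior of ${}^{\textnormal{pred}}H_0^\Delta$, its boundary, and the alternative.

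For the self-normalization step I would abbreviate
$$
    G_N(\FP):=\sqrt{N}\FP\left(\vvvert \hat S_N[\FP]\hat\Gamma_N[\FP]^{1/2}-S_0\hat\Pi_{k}[\FP]\hat\Gamma_N[\FP]^{1/2}\vvvert^2-\vvvert(S-S_0)\Pi_{k}\Gamma^{1/2}\vvvert^2\right),
$$
so that Theorem~\ref{theorem_3} reads $G_N\stackrel{d}{\to}\tau^{\rm pred}\mathbb{B}$ in $C(I)$. Since the non-sequential quantities are the values at $\FP=1$, a short computation gives $\sqrt{N}(\vvvert\hat S_N[\FP]\hat\Gamma_N[\FP]^{1/2}-S_0\hat\Pi_{k}[\FP]\hat\Gamma_N[\FP]^{1/2}\vvvert^2-\vvvert\hat S_N\hat\Gamma_N^{1/2}-S_0\hat\Pi_{k}\hat\Gamma_N^{1/2}\vvvert^2)=G_N(\FP)/\FP-G_N(1)$, and hence, by \eqref{denominator_2}, $\sqrt{N}\,\hat V_N^{\rm pred}=\{\int_I \FP^2(G_N(\FP)-\FP G_N(1))^2\,d\nu(\FP)\}^{1/2}$; here the weight $\FP^4$ in \eqref{denominator_2} is exactly what absorbs the two factors $1/\FP$, and the fact that $I=[a,1]$ is bounded away from $0$ keeps everything well defined. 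Applying the continuous mapping theorem to the map $g\mapsto(g(1),\{\int_I \FP^2(g(\FP)-\FP g(1))^2\,d\nu(\FP)\}^{1/2})$ on $C(I)$ yields the joint convergence of $(G_N(1),\sqrt{N}\hat V_N^{\rm pred})$ to $(\tau^{\rm pred}\mathbb{B}(1),\tau^{\rm pred}\{\int_I \FP^2(\mathbb{B}(\FP)-\FP\mathbb{B}(1))^2\,d\nu(\FP)\}^{1/2})$, whose second coordinate is a.s.\ finite and strictly positive (as in Corollary~\ref{corollary_2}), using $\tau^{\rm pred}>0$.

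Next I would decompose, using the identity \eqref{h6}, exactly as in \eqref{decomposition}: $\sqrt{N}(\vvvert\hat S_N\hat\Gamma_N^{1/2}-S_0\hat\Pi_{k}\hat\Gamma_N^{1/2}\vvvert^2-\Delta)=G_N(1)+T_{2N}^{\rm pred}+T_{3N}^{\rm pred}$, with bias term $T_{2N}^{\rm pred}=\sqrt{N}(\vvvert(S-S_0)\Pi_{k}\Gamma^{1/2}\vvvert^2-\E\|SX-S_0X\|^2)$ and deterministic term $T_{3N}^{\rm pred}=\sqrt{N}(\E\|SX-S_0X\|^2-\Delta)$. Since $\Gamma^{1/2}$ commutes with $\Pi_{k}$, the operators $(S-S_0)\Pi_{k}\Gamma^{1/2}$ and $(S-S_0)(\mathrm{Id}-\Pi_{k})\Gamma^{1/2}$ are Hilbert--Schmidt orthogonal, so $T_{2N}^{\rm pred}=-\sqrt{N}\vvvert(S-S_0)(\mathrm{Id}-\Pi_{k})\Gamma^{1/2}\vvvert^2\le 0$, which is $o(1)$ by the bias bound displayed immediately after Assumption~\ref{ass41}. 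The three cases are then immediate: on the boundary $\E\|SX-S_0X\|^2=\Delta$ one has $T_{3N}^{\rm pred}=0$, so the numerator equals $G_N(1)+o_\PP(1)$ and Slutsky's lemma together with the joint convergence above gives $\hat W_N^{\rm pred}(\Delta)\stackrel{d}{\to}W$ with $W$ as in \eqref{hd5}, hence rejection probability $\to\alpha$; in the interior $T_{3N}^{\rm pred}\to-\infty$ while the denominator converges to an a.s.\ positive limit, so the statistic tends to $-\infty$ in probability and the rejection probability $\to 0$; under the alternative $T_{3N}^{\rm pred}\to+\infty$, so the statistic tends to $+\infty$ in probability and the rejection probability $\to 1$, yielding consistency.

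The main obstacle is not in this argument but upstream: essentially all the real work — controlling the extra smoothing by $\hat\Gamma_N[\FP]^{1/2}$, the prediction-specific linearization map $\tilde L$, and the many remainders — is already packaged inside Theorem~\ref{theorem_3}. Given that result, Theorem~\ref{theorem_4} is a routine Slutsky/continuous-mapping exercise, structurally identical to the proof of Theorem~\ref{theorem_2}. The only two points requiring attention are (i) checking that the $\FP^4$-weight in $\hat V_N^{\rm pred}$ matches the $\FP$-normalization in $G_N$ so that the two occurrences of $\tau^{\rm pred}$ cancel in the ratio, leaving the pivotal limit $W$, and (ii) verifying that the self-normalizer has an a.s.\ strictly positive limit, which needs the hypothesis $\tau^{\rm pred}>0$ and the nondegeneracy of the Brownian bridge on $I=[a,1]$.
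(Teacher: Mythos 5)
Your proposal is correct and takes essentially the paper's own route: the paper gives no separate proof of Theorem~\ref{theorem_4}, declaring the Section~\ref{sec4} results ``straightforward modifications'' of the Section~\ref{sec3} arguments, i.e.\ exactly your combination of Theorem~\ref{theorem_3}, the continuous-mapping/self-normalization step paralleling Corollary~\ref{corollary_2}, and the three-term decomposition as in \eqref{decomposition} with the boundary/interior/alternative case analysis. Your explicit bookkeeping — the Hilbert--Schmidt orthogonality of $(S-S_0)\Pi_{k}\Gamma^{1/2}$ and $(S-S_0)(\mathrm{Id}-\Pi_{k})\Gamma^{1/2}$ giving the non-positive bias, and the identity $\sqrt{N}\,\hat V_N^{\rm pred}=\bigl\{\int_I \FP^2\bigl(G_N(\FP)-\FP G_N(1)\bigr)^2\,d\nu(\FP)\bigr\}^{1/2}$ showing that numerator and normalizer carry the same $\sqrt{N}$-scaling so that $\tau^{\rm pred}$ cancels — is consistent with, and slightly more careful than, what the paper leaves implicit.
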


We conclude this part by comparing  the weak convergence result of this Section  to those derived in \cite{crambes2013} for prediction. 

\begin{rem} \label{remark_1}
{\rm  
 \cite{crambes2013}  proved  a weak convergence result in the case of i.i.d.\ observations and somewhat different assumptions than those  used  in this section. In their Theorem~9 (which is a generalization of Theorem~4.2 in \cite{cardot2007}), they showed that for a
 random variable  $X$ distributed as $X_1$ and independent of the sequence $\{(X_n ,\varepsilon_n)\}_{n\in \Z}$ , the weak convergence
\begin{equation} \label{mas_convergence}
    \sqrt{N/k}(\hat S_N X -SX) 
\stackrel{ d }{ \to } 
    G
\end{equation}
holds, where $G$ is a centered Gaussian process on $H_2$, with covariance operator $\E \varepsilon_1 \otimes \varepsilon_1$. 
Notice the standardization of $\sqrt{N/k}$ instead of $\sqrt{N}$, which corresponds to the standard deviation of $U_N  \Gamma^\dagger_k X $. 
This term naturally occurs (as second term) in the decomposition 
$$
    \sqrt{ N / k } (\hat S_N - S\Pi_{ k } ) X 
= 
    \sqrt{ N / k } \left\{ S ( \hat \Pi_{ k } -\Pi_{ k } ) X 
    + 
    U_N \hat \Gamma^\dagger_k X \right\}.
$$
Importantly the first term here is asymptotically negligible, which is not the case in our smoothed statistic. Indeed, in the $L^2$-statistic, after the smoothness shift is performed,  the amplifying effect of  the regularized inverse is eliminated, which yields the convergence rate $1/\sqrt{N}$ for both terms instead of $\sqrt{k/N}$. In view of these technical differences we have  developed a separate asymptotic theory for the proof of Theorem~\ref{theorem_4} tailored to the study of relevant hypotheses
and could not use the result in \eqref{mas_convergence}. 
}
\end{rem}

\section{Change point analysis and two sample tests} \label{sec5}

In the context of dependent time series, functional data analysis  is usually employed to model  successive observations of a system over an extended time period. In this context it is natural to consider the stability of the data, e.g., by searching for change points in the mean  \citep[see e.g.][]{BerGabHorKok2009}, Chapter~6 in \cite{HorKokBook12}, \cite{aston2012} or \cite{DetKokVol20}) or in the second order structure, i.e.\  covariance operators \citep{jaruskova2013}, principle components \citep{dettekutta2020} or other features 
 \citep{aue2018}. 
For the linear regression model~\eqref{model_1} stability concerns first and foremost the slope operator $S$.
This problem has been addressed by \cite{HorHusKok2010} for AR(1)-processes and by \cite{horvathlin2011} for more general processes by  testing ``classical'' hypotheses (of the type $H_0$ versus $H_1$ described at the beginning of Section \ref{Subsection_relevant_hypothesis}).
  In this Section we discuss how one can adapt the previous techniques to the detection of a relevant change  in  the operator $S$. The related, but easier case of comparing two operators, say $S^{(1)}$ and $S^{(2)}$ from  independent samples is briefly discussed in Remark~\ref{rem2} below.


%

To be precise consider the following regression model  
\begin{equation} \label{model_3}
Y_n \otimes X_n = S_n X_n \otimes X_n + \varepsilon_n \otimes X_n \quad \quad n=1,...,N,
\end{equation}
where $\SL := S_1 =S_2 =...=S_{\lfloor \theta N \rfloor}$, $\SLL:= S_{\lfloor \theta N \rfloor+1}  =...=S_N$ and $\theta \in (0,1)$ determines the location of the change point  and is unknown.  We assume that $\{(X_n, \varepsilon_n)\}_{n \in \Z}$  is  a stationary time series of regressors and errors, which satisfies the Assumptions~\ref{ass31}(1)-(5) in Section~\ref{Subsection_assumptions}.
The two hypotheses of no relevant change at $\theta$ in the slope operator and of no relevant change in the predictive properties of $S$ are defined by
\begin{align}
 \label{relevant_change}
H_0^\Delta& : \vvvert \SL-\SLL\vvvert ^2 \le \Delta \quad \quad & H_1^\Delta :  \vvvert \SL-\SLL \vvvert ^2 > \Delta, \quad \quad \,\, \\
{}^{\textnormal{pred}}H_0^\Delta& : \E \| \SL X-\SLL X\| ^2 \le \Delta \quad \quad & {}^{\textnormal{pred}}H_1^\Delta: \E \| \SL X-\SLL X \| ^2 > \Delta.  \label{relevant_changepre}
\end{align}
Before continuing we point out an important difference to  change point analysis based on testing classical hypotheses (that is $\Delta=0$): Suppose a change in the slope operator is detected by a traditional change point test, but would be considered irrelevant in the sense of the hypotheses \eqref{relevant_change} for some small $\Delta >0$. In this situation it might be reasonable to reconstruct the slope $S$ using all of the data, instead of considering two 
estimates from the data 
before and after the estimated change point. On the one hand  this would introduce a (small) bias in the estimation, but 
on the other hand this increase could be compensated by a  significant reduction of the variance.

In the following discussion let 
$\hat \theta $ denote  an estimator of the change 
point (see Remark~\ref{rem2}(1) below for a concrete example). We  define the sequential  estimators for the  covariance operator 
\begin{align}
    \label{hd6}
	\hat \Gamma_N^{(1)}[\FP]=  \frac{1}{N \hat \theta} \sum_{n=1}^{\lfloor  \FP \hat \theta N \rfloor} X_n \otimes X_n \quad   \quad\textnormal{and }  \quad \quad \hat \Gamma_N^{(2)}[\FP]=    \frac{1}{N (1- \hat \theta)} \sum_{n= \hat \theta N+1}^{\lfloor  \FP (1- \hat \theta) N \rfloor} X_n \otimes X_n~.
\end{align}
The  eigenvalues (in non-increasing order) and their corresponding eigenfunctions are
denoted by $\hat \lambda_1^{(j)}[\FP] \ge \hat \lambda_2^{(j)}[\FP] \ge \ldots$ and  $\hat e_1^{(j)}[\FP], \hat e_2^{(j)}[\FP], \ldots$,  respectively $(j=1,2)$. 
As before, we consider for $k \in \N$ the regularized  inverse of the operator $\hat \Gamma^{(j)}_N[\FP]$, as well as the projection on the first $k$ empirical eigenfunctions as
$$
	    \hat \Gamma_k^{\dagger, (j)}[\FP]
	= 
	    \sum_{i=1}^{k} \frac{1}{\hat{\lambda}_i^{(j)}[\FP]} \hat e_i^{(j)}[\FP] \otimes \hat e_i^{(j)}[\FP] 
    \qquad
        \textnormal{and }  
    \qquad
        \hat \Pi_{ k }^{(j)}[\FP]
    = 
        \sum_{i=1}^{k} \hat e_i^{(j)}[\FP] \otimes \hat e_i^{(j)}[\FP].
$$
By virtue of the regularized inverse operators, we can now define the slope estimates, as
\begin{align}   \label{hd7}
        \hat S_N^{(1)}[\FP]
    =  
        \frac{1}{N \hat \theta} 
        \sum_{n=1}^{\lfloor  \FP \hat \theta N \rfloor} 
        Y_n \otimes X_n \hat \Gamma^{\dagger, (1)}_k[\FP] 
    \qquad
        \textnormal{and }  
    \qquad
        \hat S^{(2)}_N[\FP]
    =    
        \frac{1}{N (1- \hat \theta)} 
        \sum_{n=N \hat \theta+1}^{\lfloor  \FP (1- \hat \theta) N \rfloor} 
        Y_n \otimes X_n \hat \Gamma^{\dagger, (2)}_k[\FP]
\end{align}
and propose to 
reject the null hypothesis in \eqref{relevant_change}
whenever
\begin{equation} \label{test_decision_3}
        \hat W^{\rm cp}_N(\Delta)
    :=
        \frac{\sqrt{N} 
        \left( 
            \vvvert \hat S_N^{(1)}- \hat S_N^{(2)} \vvvert ^2 - \Delta  
        \right)
        }{
        \hat V_N^{\rm cp} }
    > 
        q_{1-\alpha}.
\end{equation}
where the denominator $\hat V_N^{\rm cp}$ is defined as
\begin{equation} \label{denominator_3}
        \hat V_N^{\rm cp} 
    := 
        \left\{ \int_I \FP^4 
        \left(  
        \vvvert  \hat S_N^{(1)}[\FP]- \hat S_N^{(2)}[\FP] \vvvert ^2 
        - 
        \vvvert  \hat S_N^{(1)}- \hat S_N^{(2)}\vvvert ^2 \right)^2 d\nu(\FP)  
        \right\}^{1/2}, 
\end{equation}
and $q_{1-\alpha}$ is the $(1-\alpha)$-quantile of the distribution of the random variable $W$  defined in  \eqref{hd5}. In order to test for relevant  predictive differences we define $\hat W_N^{\rm cp, pred}(\Delta)$ in the same way as $\hat W_N^{\rm cp}(\Delta)$, where we replace all instances of $S^{(j)}[\FP]$ by $S^{(j)}[\FP] \hat \Gamma_N^{(j)}[\FP]^{1/2}$. This gives us the test decision for a relevant change in prediction 
\begin{equation} \label{test_decision_4}
    \hat W_N^{\rm cp, pred}(\Delta)
> 
    q_{1-\alpha}.
\end{equation}

For the statement of the main results of this section we require the  consistency of the change point estimator $\hat \theta$, such that 
the amount of missclassified data is asymptotically negligible.
\begin{assumption}  \label{ass51}
 {\rm (Consistency of $\hat \theta$): }
    $ \quad
\hat \theta = \theta +o_\PP(1/\sqrt{N}).
$
\end{assumption}


 The following result shows 
 that the decision rules~\eqref{test_decision_3} and
 \eqref{test_decision_4} define 
 consistent tests for the hypotheses~\eqref{relevant_change} and \eqref{relevant_changepre}, respectively and have asymptotic level $\alpha$. In its formulation we understand that a postulated assumption applies  to each operator before and after the change point.

\begin{theo} \label{theorem_5}
Suppose that the Assumptions~\ref{ass31}(1)-(6) and Assumption~\ref{ass51} hold. 
\begin{itemize}
\item[a)] If additionally  Assumption~\ref{ass31}(7) holds, then the long-run variance $(\tau^{\rm cp})^2$ of 
$$ 
     \sqrt{N} \left( 
     \vvvert  \hat S_N^{(1)}- \hat S_N^{(2)} \vvvert ^2 
     - 
     \vvvert   S^{(1)} -  S^{(2)}\vvvert ^2 
     \right) 
\stackrel{ d }{ \to } 
    \mathcal{N}(0, (\tau^{\rm cp})^2)
$$
exists. If $\tau^{\rm cp}$ is positive, then the decision rule   in \eqref{denominator_3} yields a consistent, asymptotic level-$\alpha$ test for the hypothesis~\eqref{relevant_change} of no relevant change in the slope. 
\item[b)] If additionally   Assumption~\ref{ass41} holds, then the long-run variance $(\tau ^{\rm cp, pred})^2$ of 
$$
     \sqrt{N} 
     \left(
        \vvvert   \hat S_N^{(1)} 
            ( \hat \Gamma_N^{(1)} )^{1/2} 
            -  \hat S_N^{(2)}
            ( \hat \Gamma_N^{(2)} )^{1/2}
        \vvvert ^2 
        - 
        \vvvert  
        ( S^{(1)}-  S^{(2)} ) \Gamma^{1/2} 
        \vvvert ^2 
    \right) 
\stackrel{ d }{ \to } 
    \mathcal{N}(0, (\tau ^{\rm cp, pred})^2)
$$ 
exists. If $\tau ^{\rm cp, pred}$ is positive, then the decision rule  in \eqref{test_decision_4} yields a consistent, asymptotic level-$\alpha$ test for the hypothesis \eqref{relevant_changepre} of no relevant change in the prediction.
\end{itemize}
\end{theo}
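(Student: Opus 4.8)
The plan is to (i) reduce to the case of a \emph{known} change point by means of the consistency Assumption~\ref{ass51}, (ii) prove, for a known change point, a sequential invariance principle for the two--sample difference that generalises Theorems~\ref{theorem_1} and \ref{theorem_3}, and (iii) conclude by self--normalisation exactly as in the proofs of Theorems~\ref{theorem_2} and \ref{theorem_4}. The structural point that makes this possible is that in model~\eqref{model_3} the regressor--error process $\{(X_n,\varepsilon_n)\}_{n\in\Z}$ is the \emph{same} stationary $\phi$-mixing sequence on both sides of $\lfloor\theta N\rfloor$ (only the slope changes); in particular the covariance operator $\Gamma$, the smoothness class $\mathcal C(\beta,\Gamma)$ and the representations $\SL=R^{(1)}\Gamma^\beta$, $\SLL=R^{(2)}\Gamma^\beta$ from Assumption~\ref{ass31}(1) are common to both regimes, which is precisely what lets the smoothness--shift machinery of Sections~\ref{sec3} and \ref{sec4} be transplanted to the present setting.

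\emph{Step 1 (eliminating $\hat\theta$).} By Assumption~\ref{ass51} the number of misclassified observations is $N|\hat\theta-\theta|=o_\PP(\sqrt N)$, and $N\hat\theta$, $N(1-\hat\theta)$ differ from $N\theta$, $N(1-\theta)$ by $o_\PP(\sqrt N)$. I would replace every sequential object $\hat\Gamma_N^{(j)}[\FP]$, $\hat\Gamma_k^{\dagger,(j)}[\FP]$, $\hat\Pi_k^{(j)}[\FP]$, $\hat S_N^{(j)}[\FP]$ by its ``oracle'' version (defined with $\theta$ in place of $\hat\theta$, so that for the oracle the entire block $1,\dots,\lfloor\FP\theta N\rfloor$ genuinely satisfies $Y_n=\SL X_n+\varepsilon_n$) and show that the difference, inserted into the quadratic functional and multiplied by $\sqrt N$, is $o_\PP(1)$ uniformly for $\FP\in I$. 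This is the delicate point: a change of $o_\PP(\sqrt N)$ observations in $\frac{1}{N\theta}\sum Y_n\otimes X_n\,\hat\Gamma_k^{\dagger,(1)}[\FP]$ carries the amplification factor $\hat\lambda_k^{-1}\asymp k^{\SM}$ of the regularised inverse, so the naive bound on $\hat S_N^{(j)}[\FP]$ is only $o_\PP(k^{\SM}/\sqrt N)$. The remedy is the same as in the main proofs: the statistic is seen only through $\hat S_N^{(j)}[\FP]\Gamma^\beta$ (resp.\ $\hat S_N^{(j)}[\FP](\hat\Gamma_N^{(j)}[\FP])^{1/2}\Gamma^{\beta+1/2}$), where the regularised inverse cancels against the smoothing power via $\hat\Gamma_k^{\dagger,(j)}[\FP]\Gamma^\beta\approx\Gamma^{\beta-1}$, so the perturbation contributes only $o_\PP(1/\sqrt N)$; the induced eigenvalue/eigenvector perturbations are controlled by Davis--Kahan type bounds because $1/\sqrt N=o(k^{-\SM-1-\delta})$ is of smaller order than the eigengap $\lambda_k-\lambda_{k+1}\asymp k^{-\SM-1}$ under Assumption~\ref{ass31}(7) (resp.\ Assumption~\ref{ass41}). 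I expect this uniform perturbation bookkeeping --- re--running, at the level of the perturbation, the remainder estimates already behind Theorems~\ref{theorem_1} and \ref{theorem_3} --- to be the main obstacle; it is technical rather than conceptual.

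\emph{Step 2 (invariance principle, known change point).} Expanding $\vvvert\hat S_N^{(1)}[\FP]-\hat S_N^{(2)}[\FP]\vvvert^2-\vvvert(\SL-\SLL)\Pi_k\vvvert^2$ with the third binomial formula in Hilbert space, discarding the negligible quadratic remainder as in the proof of Proposition~\ref{proposition_1}, replacing the slowly varying right factor by $\SL-\SLL$ and performing the smoothness shift $\SL-\SLL=(R^{(1)}-R^{(2)})\Gamma^\beta$, the leading term becomes
$$2\sqrt N\FP\,\big\langle\big[(\hat S_N^{(1)}[\FP]-\SL\hat\Pi_k^{(1)}[\FP])-(\hat S_N^{(2)}[\FP]-\SLL\hat\Pi_k^{(2)}[\FP])\big]\Gamma^\beta,\;R^{(1)}-R^{(2)}\big\rangle+o_\PP(1).$$
Applying the linearisation behind Theorem~\ref{theorem_1} (with the map $L$ of Definition~\ref{Def_LinerazationOperat}) to each bracket, this equals, up to $o_\PP(1)$ and uniformly in $\FP$,
$$\frac{2}{\theta\sqrt N}\sum_{n=1}^{\lfloor\FP\theta N\rfloor}Z_n^{(1)}\;-\;\frac{2}{(1-\theta)\sqrt N}\sum_{n=\lfloor\theta N\rfloor+1}^{\lfloor\theta N\rfloor+\lfloor\FP(1-\theta)N\rfloor}Z_n^{(2)},$$
where in block $j$ the stationary centred summand $Z_n^{(j)}$ is built from $R^{(j)}L[X_n\otimes X_n-\Gamma]$ and $\varepsilon_n\otimes X_n\Gamma^{\beta-1}$, each paired with $R^{(1)}-R^{(2)}$, and has a finite long--run variance $\sigma_j^2$ because $\sum_h\sqrt{\phi(h)}<\infty$ together with the $(4+\kappa)$-moments yields absolute summability of its autocovariances. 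Since the two partial--sum processes run over disjoint index blocks, the same covariance bound shows that their cross--covariance, normalised by $N$, vanishes; hence the functional CLT for $\phi$-mixing sequences used in the proof of Theorem~\ref{theorem_1} gives joint convergence on $I$ to $\tfrac{2\sigma_1}{\sqrt\theta}\mathbb{B}_1(\FP)$ and $\tfrac{2\sigma_2}{\sqrt{1-\theta}}\mathbb{B}_2(\FP)$ with $\mathbb{B}_1,\mathbb{B}_2$ independent standard Brownian motions. A difference of two independent, time--scaled Brownian motions has independent increments and variance proportional to $\FP$, so it equals $\tau^{\rm cp}\mathbb{B}(\FP)$ in distribution with $(\tau^{\rm cp})^2=\tfrac{4\sigma_1^2}{\theta}+\tfrac{4\sigma_2^2}{1-\theta}$; this also supplies the one--dimensional CLT and the existence of $(\tau^{\rm cp})^2$ asserted in part a). Part b) is obtained verbatim after replacing $\hat S_N^{(j)}[\FP]$ by $\hat S_N^{(j)}[\FP](\hat\Gamma_N^{(j)}[\FP])^{1/2}$ and using the stronger smoothing, the map $\tilde L$ and Assumption~\ref{ass41} in place of Assumption~\ref{ass31}(7), exactly as in the proof of Theorem~\ref{theorem_3}.

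\emph{Step 3 (self--normalisation and conclusion).} With the invariance principle in hand I would argue as in Theorems~\ref{theorem_2} and \ref{theorem_4}: decompose the numerator $\sqrt N(\vvvert\hat S_N^{(1)}-\hat S_N^{(2)}\vvvert^2-\Delta)=T_{1N}+T_{2N}+T_{3N}$, where $T_{1N}$ is the value at $\FP=1$ of the process of Step 2 (weakly convergent to $\tau^{\rm cp}\mathbb{B}(1)$), $T_{2N}=\sqrt N(\vvvert(\SL-\SLL)\Pi_k\vvvert^2-\vvvert\SL-\SLL\vvvert^2)$ is the non--positive, asymptotically vanishing bias (controlled by Assumption~\ref{ass31}(7), resp.\ Assumption~\ref{ass41}), and $T_{3N}=\sqrt N(\vvvert\SL-\SLL\vvvert^2-\Delta)$ is deterministic; simultaneously, by the continuous mapping theorem, $\hat V_N^{\rm cp}$ (normalised by $\sqrt N$ as in the one--sample test of Theorem~\ref{theorem_2}) converges jointly with $T_{1N}$ to $\tau^{\rm cp}\{\int_I\FP^2(\mathbb{B}(\FP)-\FP\mathbb{B}(1))^2\,d\nu(\FP)\}^{1/2}$, so that $\tau^{\rm cp}$ cancels in the ratio. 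On the boundary $\vvvert\SL-\SLL\vvvert^2=\Delta$ one has $T_{3N}=0$ and $T_{2N}\to0$, whence $\hat W_N^{\rm cp}(\Delta)\stackrel{d}{\to}W$ of \eqref{hd5} and the test \eqref{test_decision_3} has asymptotic level $\alpha$; in the interior $T_{3N}\to-\infty$, so the rejection probability tends to $0$; under the alternative $T_{3N}\to+\infty$, so it tends to $1$, giving consistency. The identical argument with the prediction statistics (and Assumption~\ref{ass41}) proves part b) for the test \eqref{test_decision_4}, and the genuine two--sample situation of Remark~\ref{rem2} is strictly easier, since the independence of the two samples makes the disjoint--block covariance argument of Step~2 superfluous.
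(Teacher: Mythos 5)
Your proposal is correct and follows essentially the route the paper itself prescribes: the Appendix states that the Section~\ref{sec5} results are proved by straightforward modifications of the Section~\ref{sec3} arguments, and your three steps (reduction to the oracle change point via Assumption~\ref{ass51}, a two-block invariance principle obtained from the same smoothness-shift/linearization machinery with asymptotic independence of the blocks from the $\phi$-mixing covariance bound, and self-normalization as in Theorems~\ref{theorem_2} and \ref{theorem_4}) are exactly those modifications, including the correct form $(\tau^{\rm cp})^2=4\sigma_1^2/\theta+4\sigma_2^2/(1-\theta)$ implicit in the omitted long-run variance.
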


It is possible to give explicit formulas for $\tau ^{\rm cp}$ and $\tau ^{\rm cp, pred}$, which are similar to those in Proposition~\ref{proposition_1} and  Theorem~\ref{theorem_3} above, but we omit them to avoid redundancy. 
We conclude this section with a brief remark  concerning the change point estimator $\hat \theta$ and two sample testing.

\begin{rem} \label{rem2} ~~ 

{\rm 

(1) There are many ways of defining an estimator
for  the location $\theta$ of the change point.
As an example  we consider a standard change point estimator from  the observations $Y_1 \otimes X_1 
\ldots , Y_n \otimes X_n$
 based on the  CUSUM-principle 
(note that any change in the slope operator $S_n$ in model~\eqref{model_3} manifests itself in the product $S_n \Gamma$). 
To be precise we define 
\begin{align}   \label{hd8}
        \hat \theta 
    := 
        \frac{1}{N}
    \operatorname{argmax} \left\{ f(M): 1< M < N  \right\}  , 
\end{align}
where 
the function $f$ is given by  
$$
        f(M) 
    := 
        \frac{M}{N} 
        \left( 1 -\frac{M}{N} \right) 
        \left\vvvert 
            \frac{1}{M} \sum_{n=1}^M 
            Y_n \otimes X_n 
            - \frac{1}{N-M} 
            \sum_{n=M+1}^N Y_n \otimes X_n 
        \right\vvvert^2, 
\quad
        M = 2, \ldots ,N-1 
$$ 
It then follows from  Corollary~1 in \cite{hariz2007}
that 
$$
        |\hat \theta-\theta|
    =
        \mathcal{O}_\PP(1/N)
$$
if the condition
$$
        \sum_{q,r} \sqrt{\E \langle S^{(j)} X \otimes X, f_r \otimes  e_q \rangle^2 }
    < 
        \infty
$$
        holds for some orthonormal basis $\{f_r\}_{r \ge 1}$ of $H_2$. 
       In  this  case  Assumption~\ref{ass51}  
        is satisfied for the estimator in \eqref{hd8}.
        \smallskip 

  (2) It is easy to see that the test formulated in this section can be applied (without the change point estimation) to the case of two independent samples of size $N_1$ and $N_2$. In this case we set $N=N_1+N_2$  and replace $\hat \theta  N $ and $(1-\hat \theta ) N $ in the definitions~\eqref{hd6} and \eqref{hd7} by $N_1$ and $N_2$. The details are omitted for the sake of brevity.
Tests for relevant differences between the slopes of two functional linear regression models  
 may be of interest e.g.\ in cases where the behavior of contemporary individuals at different geographical locations is compared.
    }
\end{rem}

\section{Finite sample properties} \label{sec6}

In this section we investigate the finite sample properties of the spectral reconstructions and the self-normalized tests by means of a simulation study. We restrict ourselves to the one-sample cases presented in Sections~\ref{sec3} and \ref{sec4}, even though non-reported simulations suggest similar performance for two sample cases and  change point scenarios.  Following \cite{benatia2017} we consider  $(H_1, \mu_1) = (H_2, \mu_2) =(L^2([0,1], \mu)$, where  $\mu$ is the uniform distribution on the points $\{0, 1/50, 2/50..., 1\}$,
which may be regarded as a discretized version of the Lebesgue measure. 
We consider two scenarios of dependence:  i.i.d.\ observations and dependent observations, where regressors and errors are generated by $AR(1)$ processes. 

\subsection{Inference for the location of $S$}
\label{sec61}

Recall the regression model in \eqref{model_1}.
In the case of   i.i.d.\ observations, we generate the regressors as 
\begin{equation} \label{regressors}
X_n(t) = \frac{\Gamma(A_n + B_n)}{\Gamma(A_n)\Gamma(B_n)} t^{A_n}(1-t)^{B_n} + Z_n,
\end{equation}
(shifted $\beta$-densities) where $A_n, B_n$ are independent, uniformly distributed on the interval $[2,5]$ and $Z_n$ is an independent, standard normal shift. Notice that the regressor functions $X_1, X_2, \ldots$ are 
not centred, and hence we include an empirical centering in all statistics (see Remark~\ref{remark_centering}). The error functions $\varepsilon_n$ are i.i.d.\ realizations of an Ornstein--Uhlenbeck process, with zero mean, variance parameter and mean
reversion rate equal to one. Note that $\varepsilon_n$ is a stationary, centered, Gaussian process, which is the solution of the stochastic differential equation $d \varepsilon(t) = -\varepsilon(t) dt +  \sigma d \mathbb{B}(t) $, where $ \mathbb{B}$ is a standard Brownian motion and $\sigma=1$. 
Some typical paths of these regressors and errors are depicted in Figure~\ref{fig:sub21}.

 \begin{figure}[H]
\begin{subfigure}{0.5\textwidth}
\centering
\includegraphics[width=1\linewidth,height=220pt]{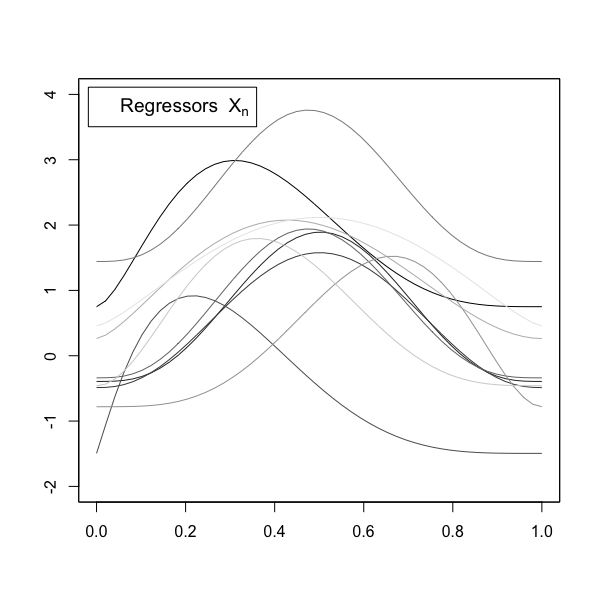}\\[-1ex]
 \end{subfigure}%
 \begin{subfigure}{0.5\textwidth}
 \centering
  \includegraphics[width=1\linewidth,height=220pt]{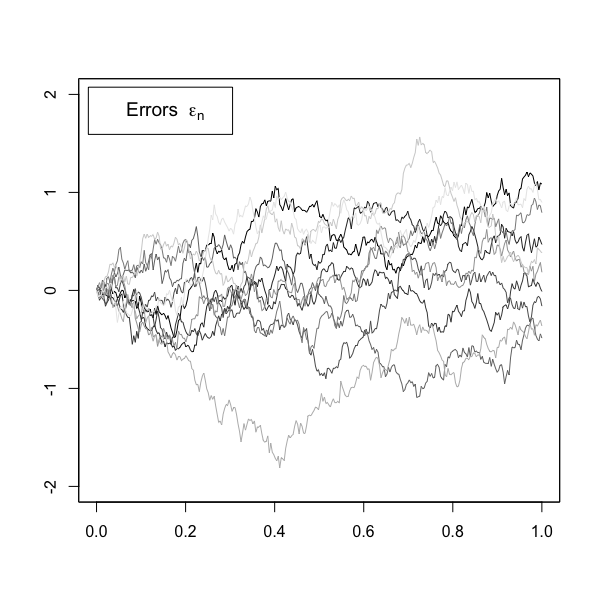}\\[-2ex]
 \end{subfigure}
  \caption{\textit{Realizations of regressors (left) and errors (right), both in the i.i.d.\ case.} \label{fig:sub21}}
 \end{figure}

In the case of dependent observations we generate  both regressors and errors by $AR(1)$ processes, with parameter $\rho =0.6$, that is
$$
\tilde X_n = \rho \tilde X_{n-1} +  X_{n}, \quad \quad \tilde \varepsilon_n = \rho \tilde \varepsilon_{n-1} + \varepsilon_{n},
$$
where the random variables $X_n$ and $ \varepsilon_n$ are i.i.d.\ random functions, generated as in the independent case  (see equation \eqref{regressors} and following discussion). 
In all  simulations a burn in period of $200$ repetitions is used. 
Finally we turn to the operators $S_0$ and $S$, both of which are integral operators,  defined as
$$
S f \mapsto \int_0^1 \varphi_S(s,\cdot ) f(s) \mu(s), \quad \quad \textnormal{and} \quad \quad S_0 f \mapsto \int_0^1 \varphi_{S_0}(s,\cdot ) f(s) d\mu(s),
$$
pointwise for a function $f \in (L^2[0,1], \mu)$, where $\varphi_{S}$ and $\varphi_{S_0}$ are the integral kernels corresponding to $S$ and $S_0$. We first define the benchmark kernel $\varphi_{S_0}$ as in \cite{benatia2017}, by
$$
\varphi_{S_0}(s,t) = 1 -|s-t|^2 
$$
and then the slightly more complex regression kernel  $\varphi_S$ as
$$
\varphi_S(s,t) = 1 -4/5 |s-t|^2 + 1/5 \cos( |s-t|/5).
$$
In Figure \ref{fig2} we plot the two kernel functions, to illustrate their shape differences. The difference between the kernels can be asessed by the relative deviation measure
$$
1 - \frac{\vvvert S- S_0 \vvvert^2}{\vvvert S_0 \vvvert^2} = 1 - \frac{\int_0^1 \big[ \varphi_{S}(s,t) -\varphi_{S_0}(s,t) \big]^2 ds \, dt }{\int_0^1 \varphi_{S_0}(s,t)^2 ds \, dt }  \approx 0.032 
$$
(since $\vvvert S \vvvert^2 \approx  \vvvert S_0 \vvvert^2$ it does not matter by which norm we normalize). 

 \begin{figure}[H]  
\begin{subfigure}{0.5\textwidth}
\centering
\includegraphics[width=1\linewidth,height=200pt]{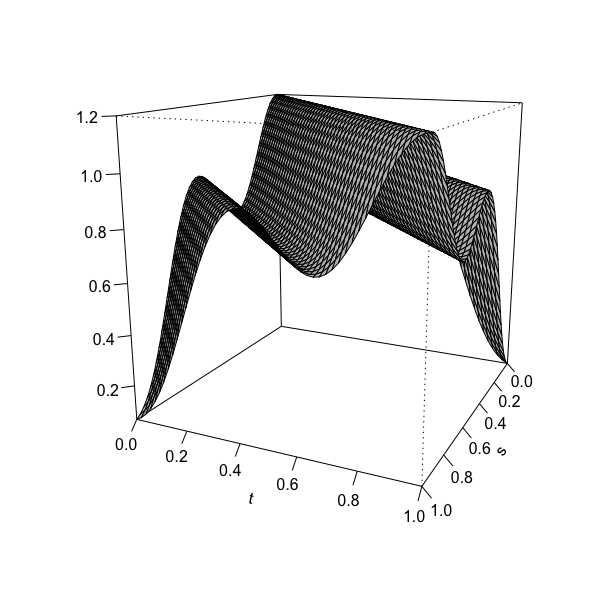}\\[-1ex]
 \end{subfigure}%
 \begin{subfigure}{0.5\textwidth}
 \centering
  \includegraphics[width=1\linewidth,height=200pt]{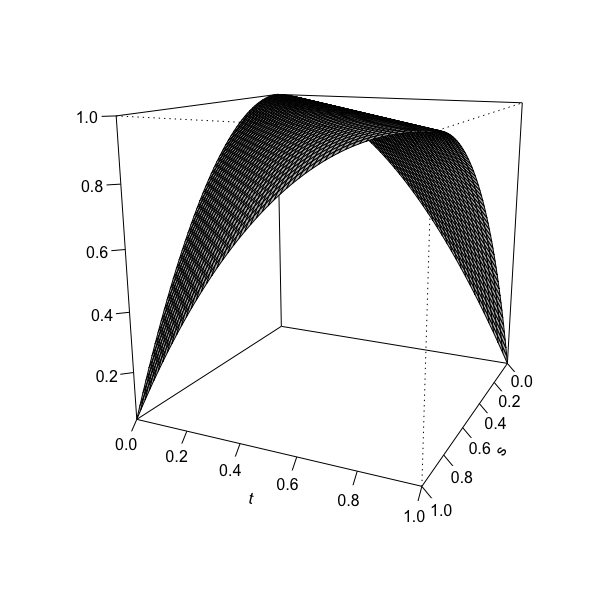}\\[-2ex]
 \end{subfigure}
  \caption{\label{fig2} \textit{ Image of the two integration kernels, plotted as surfaces. Left: $\varphi_S$. Right: $\varphi_{S_0}$. }}
 \end{figure}

Before we consider the estimation problem, it is reasonable to investigate the complexity of the two slopes $S$ and $S_0$, relative to the principal components of the operator 
$\Gamma$. For this purpose we consider the measure of relative explanation
\begin{equation}  \label{h7}
    \frac{
    \vvvert S \Pi_{ k }- S_0 \Pi_{ k }\vvvert^2 
    }{
    \vvvert S- S_0 \vvvert^2
    },
\end{equation}
 which varies in the interval $[0,1]$ and is increasing in $k$. A value of $1$ means that $S-S_0$ acts exclusively on $\operatorname{span}\{e_1, \ldots ,e_k\}$, whereas a value of $0$ implies that $S-S_0$ lives on the orthogonal complement.  A rapid increase in $k$ corresponds to a high degree of smoothness in the sense of Assumption~\ref{ass31}(1) and hence to a small bias.
However, smoothness of the slopes is not enough, since one also has to be able to estimate the principal components of $\Gamma$ properly. This corresponds to eigenvalues $\lambda_1, \lambda_2, \ldots$ of $\Gamma$ (and eigengaps), which are not too small. 

\begin{figure}[H]  
\begin{subfigure}{0.5\textwidth}
\centering
\includegraphics[width=0.9\linewidth,height=160pt]{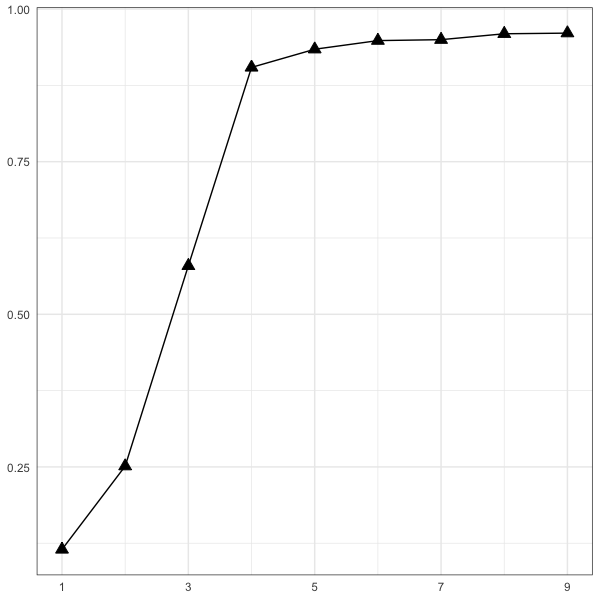}\\[-1ex]
 \end{subfigure}%
 \begin{subfigure}{0.5\textwidth}
 \centering
  \includegraphics[width=0.9\linewidth,height=160pt]{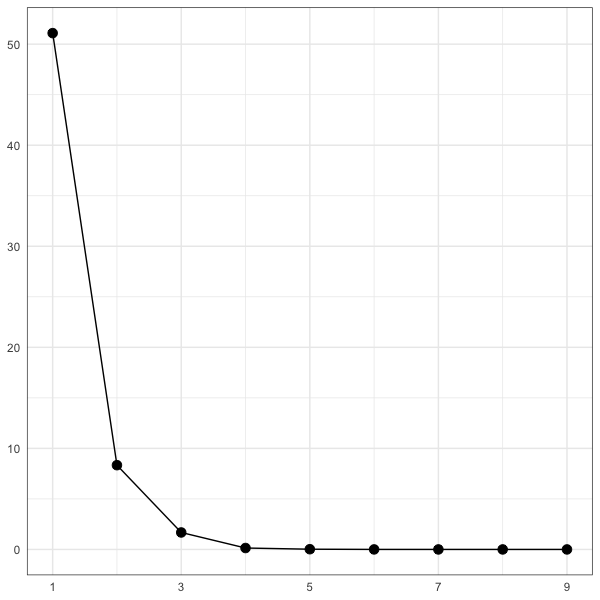}\\[-2ex]
 \end{subfigure}
  \caption{\label{fig3} \textit{Left: The measure \eqref{h7} as a function of $k$. Right: First $9$ eigenvalues of the operator $\Gamma$.  }}
 \end{figure}

In left part of Figure \ref{fig3} we display  the measure \eqref{h7}  as a function of $k$ and  observe  a 
quick increase for $k \le 5$ up to $95\%$. In the right part of the figure we present 
the decaying eigenvalues of the operator  $\Gamma$ (in the case of independent variables).  Here we observe  a fast decay followed by a sharp elbow.  
The first three eigengaps are rather large, but afterwards the distinction becomes increasingly difficult. Indeed even for a large sample size of $N=1000$, the recovery of more than $5$ eigenfunctions is somewhat unstable, resulting in inflated rejection probabilities at the boundary  of the hypothesis. Together these considerations suggest that choices of $k$ between $4$ and $5$ are optimal, depending on the sample size $N$.
\medskip

Throughout this section  all empirical results are based on  $1000$ simulation runs.
In order to investigate the power   of the  test \eqref{test_decision}  for the relevant  hypotheses \eqref{relevant_difference} we consider the  sample sizes of $N= 50, 200, 500$  and $k=3,4,5$ (note that $k=3$ is rather small for practical inference and only included to illustrate aspects of the bias-variance trade-off). The measure $\nu$ in the definition of the normalizer~\eqref{denominator} is  the uniform distribution  on the set  $\{1/5, 2/5, 3/5, 4/5\}$.  Simulations for other measures, which are not reported for the sake of brevity, 
 suggest that the number of points does not have a large or systematic impact  on the results.  
In 
Figure~\ref{fig4}   we display the 
 rejection probability of the self-normalized test \eqref{test_decision} as a function
of the threshold  $\Delta$ in the  hypothesis~\eqref{relevant_difference}.  A vertical grey line indicates the true value of $\vvvert S- S_0\vvvert^2 \approx 0.023$ and 
corresponds to the boundary of the hypotheses, while the  grey horizontal line shows the nominal level $\alpha$, which is chosen as  $\alpha = 0.05$. The left column shows the results for the i.i.d.\  case, while the results  for the dependent case can be found in the right column. 
The plots can be evaluated as follows: 

\begin{figure}[H]  \ContinuedFloat
\begin{subfigure}{0.5\textwidth}
\centering
\includegraphics[width=0.85\linewidth,height=180pt]{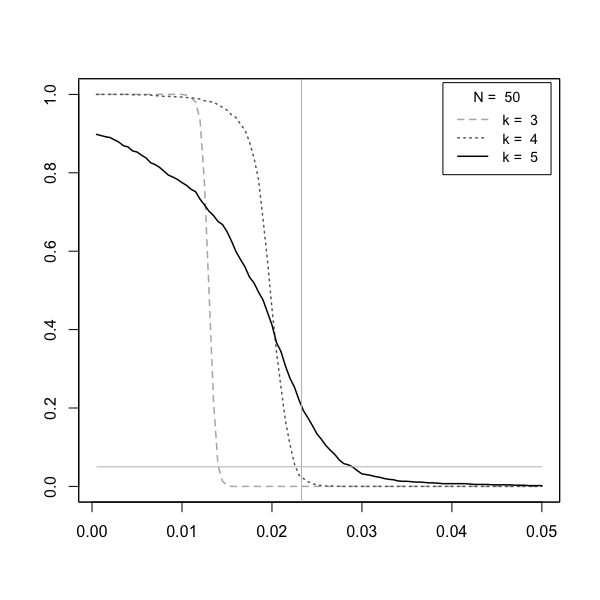}\\[-1ex]
 \end{subfigure}%
 \begin{subfigure}{0.5\textwidth}
 \centering
  \includegraphics[width=0.85\linewidth,height=180pt]{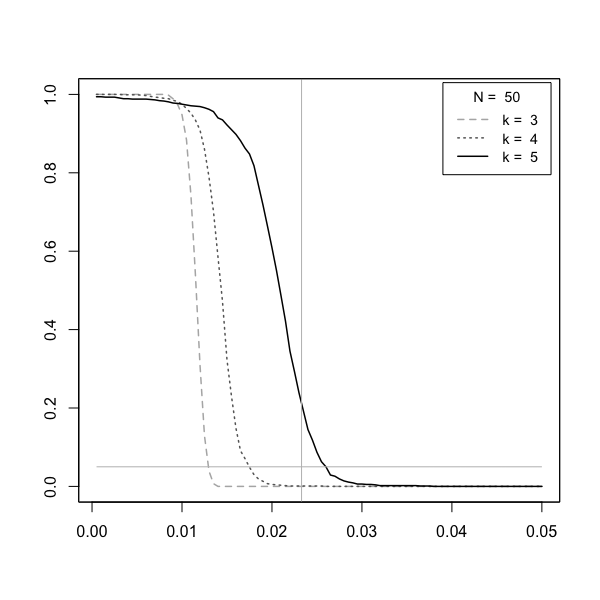}
 \end{subfigure}
  \end{figure}
 $ $\\[-15ex]
 %
  \begin{figure}[H]  \ContinuedFloat
 \begin{subfigure}{0.5\textwidth}
\centering
\includegraphics[width=0.85\linewidth,height=180pt]{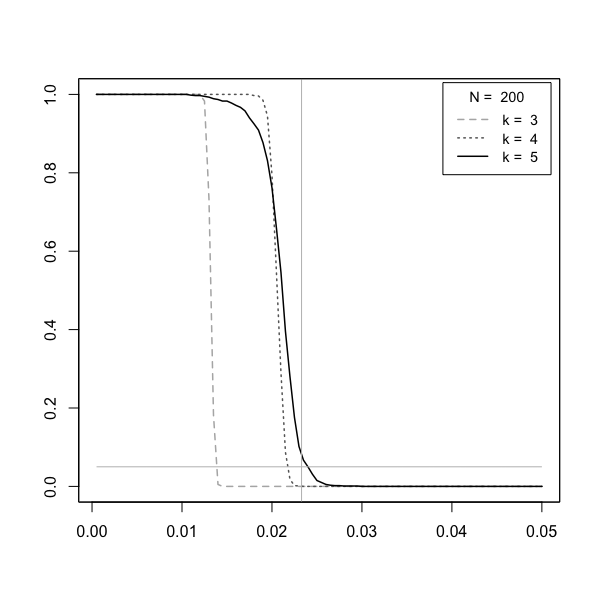}\\[-1ex]
 \end{subfigure}%
 \begin{subfigure}{0.5\textwidth}
 \centering
  \includegraphics[width=0.85\linewidth,height=180pt]{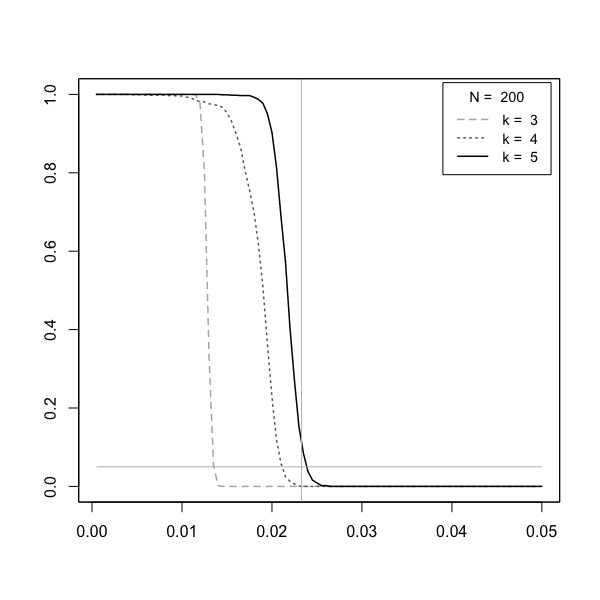}\\[-2ex]
 \end{subfigure}
  \end{figure}
  $ $\\[-15ex]
  \begin{figure}[H]  
 \begin{subfigure}{0.5\textwidth}
\centering
\includegraphics[width=0.85\linewidth,height=180pt]{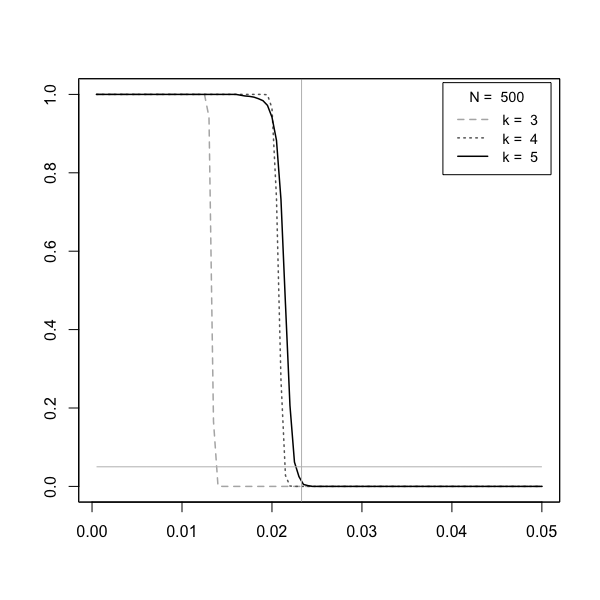}\\[-1ex]
 \end{subfigure}%
 \begin{subfigure}{0.5\textwidth}
 \centering
  \includegraphics[width=0.85\linewidth,height=180pt]{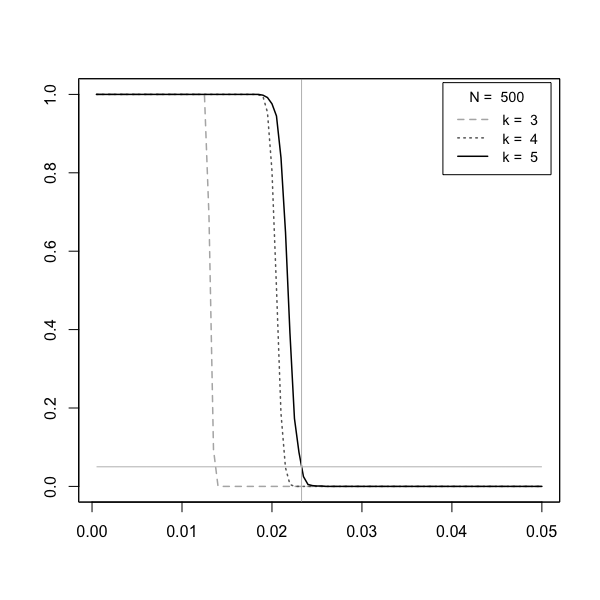}\\[-2ex]
 \end{subfigure}
   \caption{\label{fig4}
     \textit{Rejection probabilities ($y$-axis) of the test \eqref{test_decision} for different choices of $\Delta$ ($x$-axis). Different sizes of $N$ and $k$ are displayed, for i.i.d.\ observations (left) and dependent observations (right). The level $\alpha=0.05$ is indicated by the horizontal grey line, and the true size of $\vvvert S- S_0\vvvert^2$ by the vertical line.}}
 \end{figure}
 
If $\Delta <\vvvert S- S_0\vvvert^2\approx 0.023 $ (left of the vertical line) we operate under the alternative (see \eqref{relevant_difference}) and expect high rejection probabilities. At the boundary  of the hypotheses corresponding to  the vertical line we expect to approximate $\alpha$, whereas for larger values of $\Delta$ the asymptotic theory developed in  Section~\ref{sec3} suggests 
that the rejection probability tends to $0$; see Remark~\ref{remark_centering}(2). Because the  test statistic is a monotone function of  $\Delta$, the rejection probability decreases monotonically in $\Delta$; see Remark~\ref{remark_centering}(3). 
 
Before we evaluate the specific performance of the  test \eqref{test_decision}, we illustrate in Figure~\ref{fig4} some general features of the linear inverse problem. Evidently the rejection curves exhibit the bias variance trade-off, which occurs at the level of $k$. For $k = 3$ we observe a substantial bias, which we would expect, in view of Figure~\ref{fig3} (left). It diminishes rapidly for $k=4$ and $k=5$. In view of our discussion of \eqref{decomposition} we should understand the left shifts of the rejection curves as a result of the bias, which makes the test somewhat conservative. The upside of smaller values of $k$ is the accompanying small variance, manifest in the rapid decay of the rejection curves. For larger values of  $k$ the variance increases and this may result in inflated rejection probabilities at the boundary (e.g. for $N=50$ and the too large choice $k=5$). 

With regard to the statistical properties of the test presented in \eqref{test_decision}, we observe a reasonable level approximation at  the boundary, even for sample sizes as small as $50$ in the independent scenario. Dependence in the observations leads to worse performance, particularly for samples as small as $N=50$, whereas the effect for $N= 200, 500$ is minute.  The power of the test is for independent observations even for $N=50$ acceptable  and for larger samples, we observe rapid improvements, even for greater values of  $k$. 
In the dependent case for $N=200, 500$ we observe a good level approximation at the boundary  and high rejection probabilities under the alternative. Interestingly the bias-variance trade-off sometimes implies that while some $k$ leads to the optimal level approximation at the boundary  and thus high power close to the hypothesis, for larger distances smaller values of $k$ perform better, because the variance is smaller. This effect is reflected by crossing rejection curves. 
Finally, we notice that in view of the sometimes abrupt change in variance and bias for two successive values of $k$, in practice a soft threshold might be considered, for a more nuanced trade-off.

\subsection{Inference for relevant prediction errors}

We consider the set-up described in the previous section to investigate deviation in the predictive performance of $S$ and $S_0$. We begin by considering the smoothed kernels $S \Gamma^{1/2}$ and $S_0 \Gamma^{1/2}$, which are depicted in Figure~\ref{fig5} (for $\Gamma$ corresponding to the i.i.d.\ case). Even though they bear some resemblance to their originals (see Figure~\ref{fig2}), we observe a high smoothing effect caused  by the application of $\Gamma^{1/2}$.

 \begin{figure}[H]  
\begin{subfigure}{0.5\textwidth}
\centering
\includegraphics[width=1\linewidth,height=200pt]{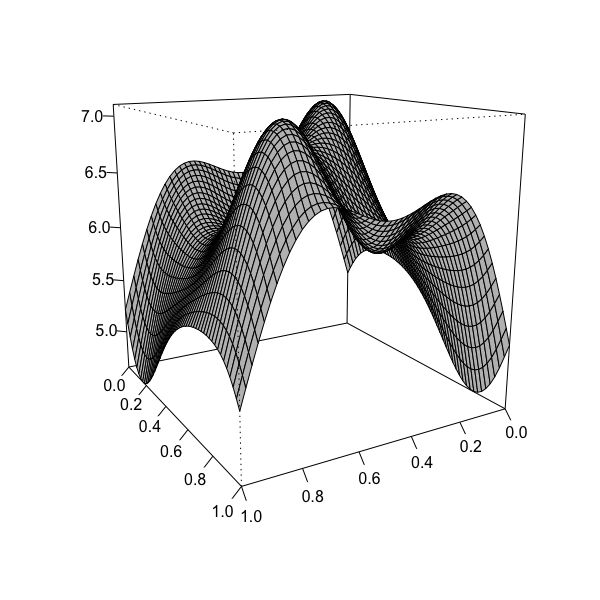}\\[-1ex]
 \end{subfigure}%
 \begin{subfigure}{0.5\textwidth}
 \centering
  \includegraphics[width=1\linewidth,height=200pt]{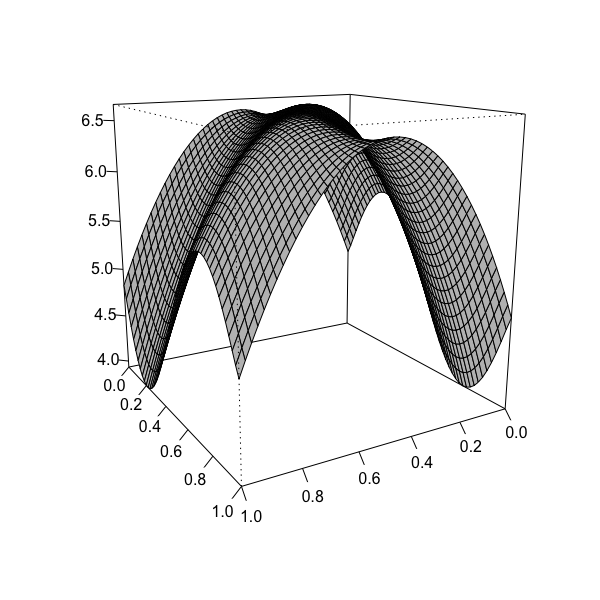}\\[-2ex]
 \end{subfigure}
  \caption{\label{fig5} \textit{ Image of the two integration kernels, plotted as surfaces. Left: $\varphi_{S \Gamma^{1/2}}$. Right: $\varphi_{S_0\Gamma^{1/2}}$. }}
 \end{figure}
As a consequence of the smoothing effect, we expect that the relative explanation should be higher than for the non-smoothed operators. This is in fact what we see in Figure~\ref{fig6} (left), where we have plotted the relative explanation measure
\begin{equation} \label{h8}
 \frac{\vvvert S\Gamma^{1/2}\Pi_{ k }- S_0\Gamma^{1/2} \Pi_{ k }\vvvert^2}{\vvvert S\Gamma^{1/2}-\ S_0\Gamma^{1/2} \vvvert^2}. 
 \end{equation}
We see that the first principal component already covers more than $ 75 \%$ of the norm, for $k = 3$ the relative explanation is about $99 \%$ and for $k =4$ it has reached almost $100\%$. Compared to the explanation for the non-smoothed kernel in Figure~\ref{fig3} (left) this is a very rapid increase and it suggests the use of smaller values for $k$. Notice that this matches our
theoretical results in Sections~\ref{sec3} and \ref{sec4} (compare Assumptions~\eqref{ass31}(7) and \ref{ass41})), which suggest higher $k$ for the recovery of the slope and smaller $k$ for the purpose of prediction. 
On the right side of Figure~\ref{fig6} we display  the smoothing kernel corresponding to the operator  $\Gamma^{1/2}$ in the case of i.i.d.\ observations.\\

  \begin{figure}[H]  
\begin{subfigure}{0.5\textwidth}
\centering
\includegraphics[width=0.9\linewidth,height=160pt]{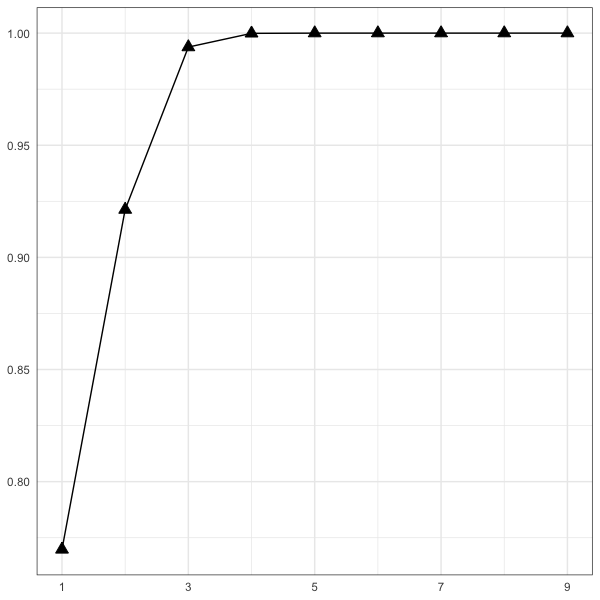}\\[-1ex]
 \end{subfigure}%
 \begin{subfigure}{0.5\textwidth}
 \centering
  \includegraphics[width=1\linewidth,height=200pt]{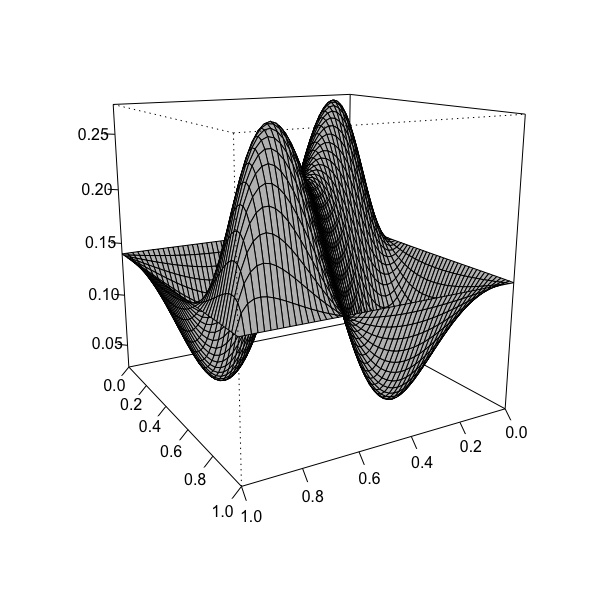}\\[-2ex]
 \end{subfigure}
  \caption{\label{fig6} \textit{ Left: Relative explanation measure defined  in \eqref{h8}  as a function of $k$. Right: Kernel of the operator $\Gamma^{1/2}$ (for i.i.d.\ observations). }}
 \end{figure}

We now proceed to the application of the statistical test \eqref{test_decision_2}, presented in Section~\ref{sec4} for the hypothesis~\eqref{relevant_prediction_error}.  As in  Section~\ref{sec61} we consider sample sizes $N = 50,  200, 500$ and parameter choices $k=1,2,3$, both for i.i.d.\ samples (left part of the figures) and dependent samples (right part of the figures). For details on the model as well as the dependence we refer to   Section~\ref{sec61}. The measure $\nu$ in the normalizer $\hat V_N^{\rm pred}$ (see \eqref{denominator_2}) is again chosen to be the uniform measure on $\{1/5, 2/5, 3/5, 4/5\}$ and the level of the test is $\alpha = 0.05$. All simulations are based on $1000$ repetitions. Notice that the absolute deviation (vertical grey line) differs in the case of independent and dependent data, since  the operator $\Gamma$  is different in the dependent and independent case.

\begin{figure}[H]  \ContinuedFloat
\begin{subfigure}{0.5\textwidth}
\centering
\includegraphics[width=0.85\linewidth,height=180pt]{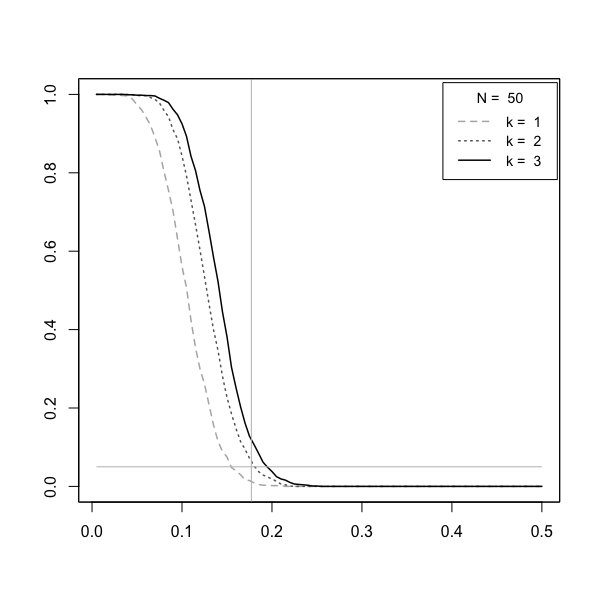}\\[-1ex]
 \end{subfigure}%
 \begin{subfigure}{0.5\textwidth}
 \centering
  \includegraphics[width=0.85\linewidth,height=180pt]{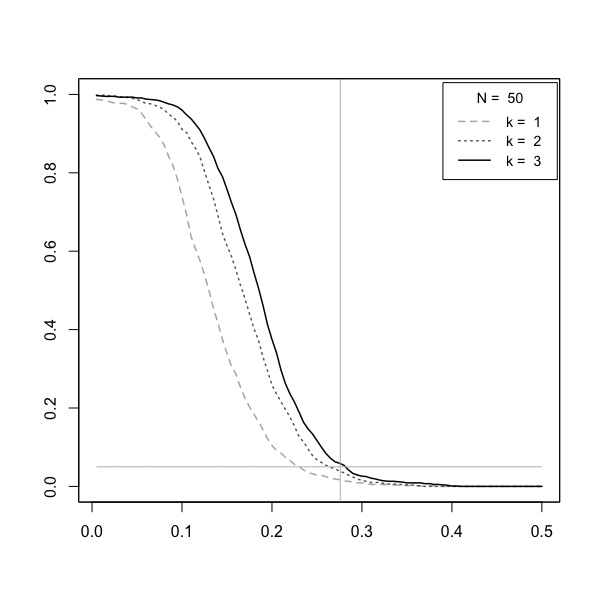}
 \end{subfigure}
  \end{figure}
   $ $\\[-15ex]
  %
  \begin{figure}[H]  \ContinuedFloat
 \begin{subfigure}{0.5\textwidth}
\centering
\includegraphics[width=0.85\linewidth,height=180pt]{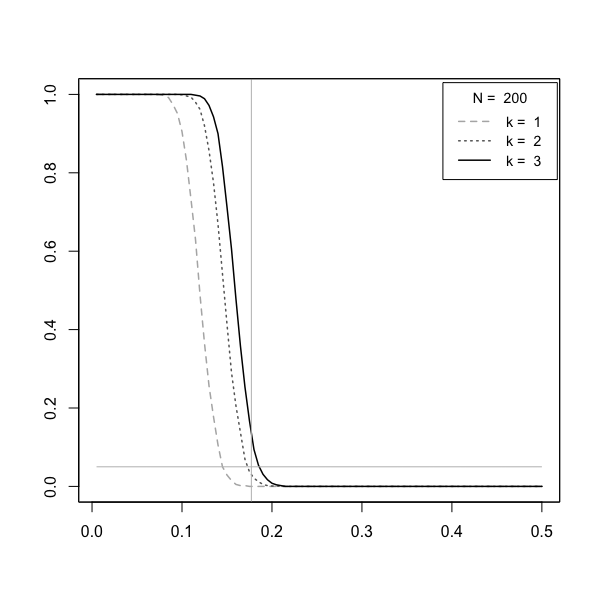}\\[-1ex]
 \end{subfigure}%
 \begin{subfigure}{0.5\textwidth}
 \centering
  \includegraphics[width=0.85\linewidth,height=180pt]{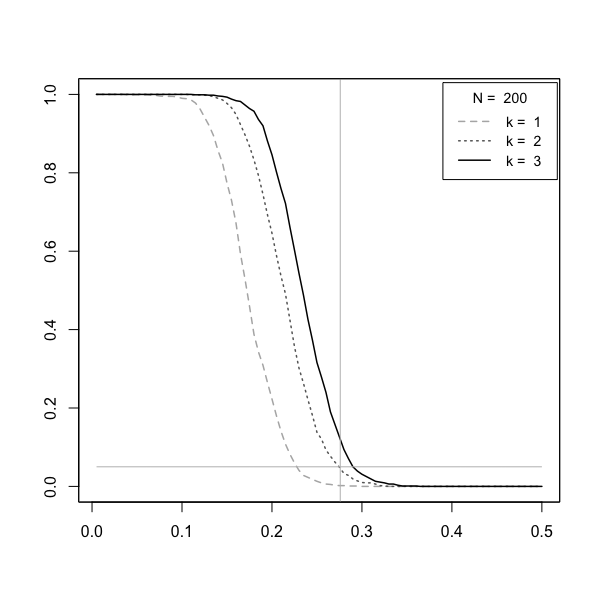}\\[-2ex]
 \end{subfigure}
  \end{figure}
 $ $\\[-15ex]
  \begin{figure}[H]  
 \begin{subfigure}{0.5\textwidth}
\centering
\includegraphics[width=0.85\linewidth,height=180pt]{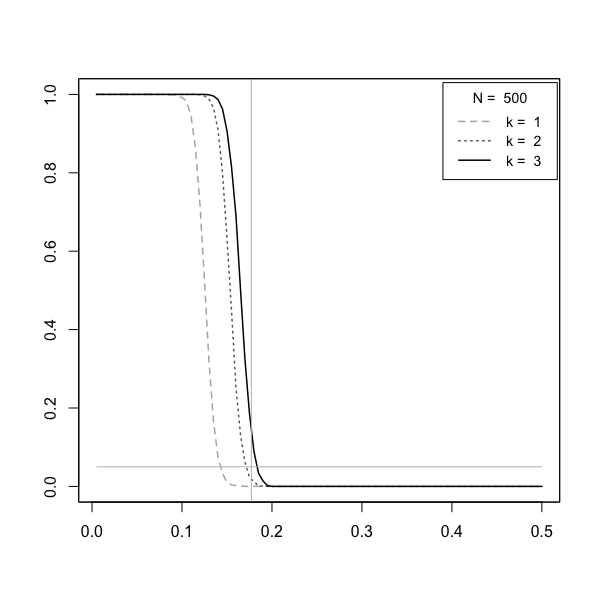}\\[-1ex]
 \end{subfigure}%
 \begin{subfigure}{0.5\textwidth}
 \centering
  \includegraphics[width=0.85\linewidth,height=180pt]{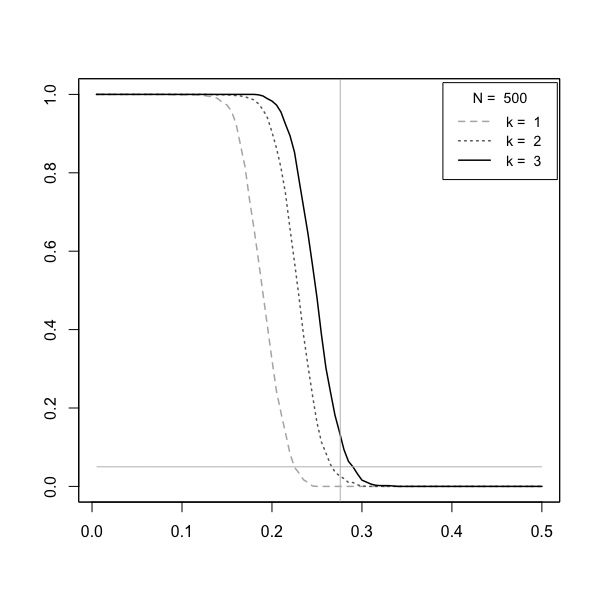}\\[-2ex]
 \end{subfigure}
   \caption{\label{fig7} \textit{Rejection probabilities ($y$-axis) of the test \eqref{test_decision_2} for different choices of $\Delta$ ($x$-axis). Different sizes of $N$ and $k$ are displayed, for i.i.d.\ observations (left) and dependent observations (right). The level $\alpha=0.05$ is indicated by the horizontal grey line, and the true size of $\vvvert S \Gamma^{1/2} - S_0 \Gamma^{1/2}\vvvert^2$ by a vertical line.}}
 \end{figure}

The numerical results confirm the theoretical findings in Section~\ref{sec4}. We observe a good approximation of the level  at the boundary  
of the hypothesis, both for dependent and independent data and accordingly high rejection probabilities under the alternative. The smoothing parameter is chosen smaller than in the case of prediction,  which corresponds to the smaller  bias in the case of prediction. In contrast to   Section~\ref{sec61}  we do not see pronounced crossing of the power curves for different $k$, such that better level approximation automatically translates into higher overall power. This also is an effect of the relatively small bias in the case of  prediction.

\bigskip
\bigskip

%


 \nocite{*}
                                                                                                                                                                                                                                                                                                                            

%
%
%
%
%
%
%
%
%
%

\appendix

%
%
%
%
%
%
%
%
%
%

\section{Proofs and technical details}

The Appendix is dedicated to the proofs of the theoretical results from Sections~\ref{sec3} to \ref{sec5}. We only show the weak convergence results from Section~\ref{sec3} explicitly, as those in Sections~\ref{sec4} and  \ref{sec5}  are straightforward modifications.  In the Appendix~B we have collected some results concerning operators and their eigensystems. There (in Remark~\ref{Rem_eigenfunctions}) we also address the problem of the non-uniqueness of eigenfunctions, which is a technical issue, concerning the comparisons of eigenfunctions. Roughly speaking we always assume that the eigenfunctions $e_i$ and their empirical counterparts  (both unique up to sign) have "the same sign", in the sense that for all $\FP$ the inequality $\|e_i-\hat e_i[\FP]\| \le \|e_i+\hat e_i[\FP]\| $ holds. Notice that this technicality is of no concern in applications, as our test statistics always involve outer products of the form $\hat e_i[\FP] \otimes \hat e_i[\FP]$, which cancel the sign out. \\
Finally, we assume for notational simplicity that the sequential parameter $\FP$ is contained in the interval $[1/2, 1]$ (any interval $\left[ a, 1 \right]$,  $a>0$, can be dealt with in the same way). In the remainder of this introduction we recall some useful identities for sequential operators and introduce a suitable sup-norm for them. \\

Let us revisit the sequential statistics defined in Section~\ref{Subsection_weak_convergence}: Recall the definition of the sequential covariance estimate $\hat{ \Gamma }_N[ \FP ]$ (see \eqref{Eq_DefSeqCovOp}), its eigenvalues and eigenfunctions $\hat{\lambda}_i[ \FP ], \hat e_i[ \FP ]$, $i \ge 1$, its regularized inverse $\hat \Gamma_k^\dagger[ \FP ]$ (see \eqref{Eq_DefSeqPseudoInvs-and-SeqProj}), the projection 
$
\hat \Pi_{ k }[\FP],
$ 
 (see \eqref{Eq_DefSeqPseudoInvs-and-SeqProj})
 on $\operatorname{span}\{\hat e_1[ \FP ],...,\hat e_{ k }[ \FP ]\}$, which can be expressed as $\hat \Pi_{ k }[\FP]=\hat{ \Gamma}_N [ \FP ]  \hat{ \Gamma}^{ \dagger }_k [ \FP ]$ and finally the sequential estimate of $S$, denoted by $\hat S_N[ \FP ]$ (see \eqref{App_ref_3}). Notice that for $\hat S_N[\FP]$ an analogue decomposition to \eqref{h4} holds
\begin{equation*} 
      \hat S_N[\FP] 
    = 
        S \hat \Pi_{ k } [\FP ] + U_N [\FP]\hat{ \Gamma}_k^\dagger[\FP],
\end{equation*}
where the sequential residual term is defined as
\begin{equation*} 
        U_N[\FP] 
    :=
        \frac{1}{N} \sum_{n=1}^{\lfloor N \FP \rfloor} 
        \varepsilon_n \otimes X_n.
\end{equation*}

For fixed $\FP$, each of these statistics is defined as an element of a suitable Hilbert space ($\hat{\lambda}_i[\FP] \in \R$, $\hat e_i[\FP]\in H_1$, $\hat{ \Gamma }_N[\FP], \hat{ \Gamma}_k^\dagger[\FP], \hat{ \Pi }_{ k }[\FP]\in \mathcal{S}(H_1, H_1)$ and $\hat{ S }_N[\FP]\in \mathcal{S}(H_1, H_2)$). Alternatively, we may also view each of these statistics as a bounded function in $\FP$, mapping from $\left[ 1/2, 1 \right]$ into the respective Hilbert space. We make this notion more precise by defining the space of bounded functions:
\begin{defi} \label{definition_sequential_space}
Let $\mathcal{B}$ be a Banach space with norm $\vvvert \cdot \vvvert_\mathcal{B}$. Then we denote by 
$$
\ell^\infty(\mathcal{B}) := \big\{ f: [1/2,1] \to \mathcal{B}: \sup_{\FP \in [1/2, 1 ] } \vvvert f(\FP)\vvvert_\mathcal{B}  < \infty \big\},
$$
the space of all bounded functions with range $\mathcal{B}$. Endowed with the sup-norm $\ell^\infty(\mathcal{B})$ is itself a Banach space.
\end{defi}

In the sense of Definition~\ref{definition_sequential_space}, we see that e.g.\ $\hat{ \Gamma }_N [ \cdot ] \in \ell^\infty(\mathcal{S}(H_1, H_1))$. We conclude this part with the observation that the sequential covariance estimator $\hat{ \Gamma}_N[ \FP ]$ is asymptotically close to the true one $\FP \Gamma$. 

\begin{theo} (\cite{dette2021}) \label{Theo_StochBoundFuncCovOp}
	Under the Assumptions \eqref{ass31}(2) and (3)  it holds that
$$
		\sup_{\FP \in [1/2, 1 ] } \vvvert  \sqrt{ N }  (\hat{ \Gamma}_N [ \FP ] - \FP \Gamma ) \vvvert
	=
		\mathcal{ O }_{ \PP } (1 ).
$$
\end{theo}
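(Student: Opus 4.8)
The plan is to reduce the assertion to a maximal inequality for the partial sums of the centered, Hilbert--Schmidt--valued sequence $Z_n := X_n \otimes X_n - \Gamma \in \mathcal{S}(H_1,H_1)$, $n \in \Z$. First I would write, for every $\FP \in [1/2,1]$,
$$
\sqrt{N}\bigl(\hat \Gamma_N[\FP] - \FP \Gamma\bigr)
= \frac{1}{\sqrt{N}}\sum_{n=1}^{\lfloor \FP N\rfloor} Z_n
+ \sqrt{N}\Bigl(\tfrac{\lfloor \FP N\rfloor}{N} - \FP\Bigr)\Gamma ,
$$
and note that the second, deterministic term has Hilbert--Schmidt norm at most $\vvvert\Gamma\vvvert / \sqrt{N}$, which tends to $0$ uniformly in $\FP$; here $\vvvert\Gamma\vvvert \le \E\,\vvvert X_1 \otimes X_1\vvvert = \E\|X_1\|^2 < \infty$ by Assumption~\ref{ass31}(2). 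Hence it suffices to show
$$
\sup_{\FP \in [1/2,1]} \Bigl\vvvert \tfrac{1}{\sqrt{N}}\sum_{n=1}^{\lfloor \FP N\rfloor} Z_n \Bigr\vvvert = \mathcal{O}_{\PP}(1).
$$
Since $\FP \mapsto \sum_{n=1}^{\lfloor \FP N\rfloor} Z_n$ is piecewise constant, this supremum equals $\max_{\lceil N/2\rceil \le m \le N}\vvvert \sum_{n=1}^{m} Z_n \vvvert$, so only a maximal inequality over $m$ is needed.

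Next I would collect the properties of $\{Z_n\}$ that feed into such an inequality. The sequence is strictly stationary and centered by Assumption~\ref{ass31}(3). Since $Z_n$ is a measurable function of $X_n$ alone, its $\phi$-mixing coefficients are dominated by those of $\{(X_n,\varepsilon_n)\}$, so $\sum_{h\ge1}\sqrt{\phi_Z(h)}<\infty$. Finally $\E\vvvert Z_1\vvvert^{2+\delta}<\infty$ for $\delta=\kappa/2$, because $\vvvert X_1\otimes X_1\vvvert = \|X_1\|^2$ and $\E\|X_1\|^{4+\kappa}<\infty$ by Assumption~\ref{ass31}(2). These are precisely the hypotheses under which a weak invariance principle is available for Hilbert-space-valued $\phi$-mixing sequences.

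The main step is then to invoke (or reprove) that invariance principle at the level of a moment bound. Using the covariance inequality for $\phi$-mixing sequences in Hilbert spaces (see \cite{dehling}), one obtains $\E\vvvert \sum_{n=a+1}^{b} Z_n\vvvert^2 \le C\,(b-a)$ with $C$ independent of $a<b$; the higher moments of $Z_1$ upgrade this, via a Rosenthal/Móricz-type argument, to $\E\vvvert \sum_{n=a+1}^{b} Z_n\vvvert^{2+\delta} \le C\,(b-a)^{1+\delta/2}$. This quasi-superadditive moment structure is exactly what the maximal inequality of \cite{moricz1982} requires, and it yields $\E\,\max_{1\le m\le N}\vvvert \sum_{n=1}^{m} Z_n\vvvert^2 \le C\,N$. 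An application of Markov's inequality then gives the claimed $\mathcal{O}_{\PP}(1)$ bound for the supremum.

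The main obstacle is the passage from a moment bound on individual block sums to a bound on the maximum over all partial sums: this is where the strengthened moment condition (order $4+\kappa$ rather than $2+\kappa$) and the summability $\sum_{h\ge1}\sqrt{\phi(h)}<\infty$ are genuinely used, and it is the only non-routine ingredient; the remaining steps are bookkeeping. The full argument is carried out in \cite{dette2021}.
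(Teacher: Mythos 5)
Your sketch is sound, but note that the paper itself does not prove this statement at all: Theorem \ref{Theo_StochBoundFuncCovOp} is imported verbatim from \cite{dette2021}, so there is no in-paper argument to compare against line by line. Your reconstruction is the natural one and is consistent with the machinery this paper uses for the closely related sequential bound in Lemma \ref{Lem_Bounds-NonSmoothed-Var-and-Bias-of-S_N}\,ii): reduce the supremum over $\FP$ to a maximum over partial sums of the centered, stationary, $\phi$-mixing Hilbert--Schmidt sequence $Z_n=X_n\otimes X_n-\Gamma$ (your handling of the deterministic $\sqrt{N}(\lfloor \FP N\rfloor/N-\FP)\Gamma$ term and of the moment/mixing inheritance from Assumptions \ref{ass31}(2)--(3) is correct), then combine the Hilbert-space covariance inequality for $\phi$-mixing sequences \citep[eq.~(3.17) in][]{dehling1983} with a Móricz-type maximal inequality \citep{moricz1982}. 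The one point where your route genuinely differs from the paper's own use of \cite{moricz1982} is how the maximal inequality is fed: Theorem~3.1 there needs the moment bound to carry an exponent strictly larger than one on the (quasi-)superadditive functional, so the plain second-moment bound $\E\vvvert\sum_{n=a+1}^{b}Z_n\vvvert^2\le C(b-a)$ does not suffice by itself; you resolve this by upgrading to a Rosenthal-type bound $\E\vvvert\sum_{n=a+1}^{b}Z_n\vvvert^{2+\delta}\le C(b-a)^{1+\delta/2}$ with $\delta=\kappa/2$, whereas the paper's own application in the proof of Lemma \ref{Lem_Bounds-NonSmoothed-Var-and-Bias-of-S_N}\,ii) works with second moments but builds in polynomial slack ($k^{2\epsilon}$) to absorb the resulting loss. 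Your route buys a clean $\mathcal{O}_\PP(1)$ bound with no slack, at the price of the $(2+\delta)$-moment inequality for Hilbert-valued $\phi$-mixing partial sums, which is the only ingredient you assert rather than prove; since you flag it explicitly and it is exactly what is established in \cite{dette2021}, I would not call this a gap, but a complete write-up would have to either prove that inequality or give a precise reference for it.
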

    We now give the proof of the main Theorem~\ref{theorem_1}. To make the proof 
    easier to comprehend, the discussion of various remainders is bundled in later lemmata.

%
%
%
%
%
%
%
%
%

\subsection{Proof of Theorem~\ref{theorem_1} } \label{Subsection_App_A_3}

The proof consists of two steps: First we derive an asymptotic linearization of the test statistic. Subsequently we show weak convergence to a Brownian motion.\\

	Using the identity $ \| a \|^2 - \| b\|^2 = 2 \langle a - b,  b \rangle + \| a - b \|^2$ 
	(which is a version of the third binomial formula for inner products), it follows that
\begin{align}\label{Eq_FuncWeakConv-InProd-Eq-ThirdBinForm}
	& \FP\sqrt{ N }  \left\{ 
			\vvvert \hat{ S }_N [ \FP ] - S_0  \hat{ \Pi }_{ k } [ \FP ] \vvvert^2  
			-
			\vvvert  S \Pi_{ k } - S_0 \Pi_{ k } \vvvert^2 
		\right\} \\
=& 	2 \FP \sqrt{ N } 
		 \langle  \hat{ S }_N [ \FP ] - S_0  \hat{ \Pi }_{ k } [ \FP ] -  S \Pi_{ k } + S_0 \Pi_{ k },
		   S \Pi_{ k } - S_0 \Pi_{ k }  \rangle 
	+
	\FP\sqrt{ N } \mathcal{R}_1^2 [ \FP ]. \nonumber
\end{align}
Here the term $\mathcal{R}_1[ \FP]$ is defined as 
\begin{align} \label{remainder_R_1}
	\mathcal{R}_1 [ \FP ] 
:=&
		 \vvvert \hat{ S }_N [ \FP ] - S_0  \hat{ \Pi }_{ k } [ \FP ] -  S \Pi_{ k } + S_0 \Pi_{ k } \vvvert
	\\
=& 
		\vvvert 
			( S  - S_0 ) ( \hat{ \Pi }_{ k } [ \FP ] -  \Pi_{ k } ) 
			+ 
			U_N [ \FP ]  \hat{ \Gamma}^{ \dagger }_k [ \FP ]
		\vvvert. \nonumber
\end{align}
	
	Using the triangle inequality we see that
	$
	\mathcal{R}_1 [ \FP ] \le
	\vvvert 
			( S  - S_0 ) ( \hat{ \Pi }_{ k } [ \FP ] -  \Pi_{ k } ) 
	\vvvert 
			+ 
	\vvvert 
			U_N [ \FP ]  \hat{ \Gamma}^{ \dagger }_k [ \FP ]
	\vvvert.
	$
Recalling that $S-S_0$ can be rewritten as $(R-R_0) \Gamma^\beta$ for suitable Hilbert--Schmidt operators $R, R_0$ (see Assumption~\ref{ass31}(1)), implies that the first term on the right is bounded by
\begin{equation}\label{Eq_R_1_bound_op_norm}
\vvvert 
			( S  - S_0 ) ( \hat{ \Pi }_{ k } [ \FP ] -  \Pi_{ k } )  \vvvert  \le \vvvert 
			 R - R_0 \vvvert \vvvert  \Gamma^\beta( \hat{ \Pi }_{ k } [ \FP ] -  \Pi_{ k } )  \vvvert_\mathcal{L}.
\end{equation}
	Now Lemma~\ref{Lem_Bounds-NonSmoothed-Var-and-Bias-of-S_N}, together with the bound \eqref{Eq_R_1_bound_op_norm}
	implies for an arbitrarily small $\epsilon>0$, that
\begin{equation} \label{rate_R_1}
         \mathcal{R}_1 [ \FP ] 
	 = 
			 \mathcal{ O }_{ \PP } ( k^{  (\SM + 1)/2 +  \epsilon}/   \sqrt{N} ).
\end{equation}
Consequently  $\FP \sqrt{ N } \mathcal{R}_1^2 [ \FP ]=	o_{ \PP } ( 1   )$	whenever $\epsilon$ is chosen small enough, i.e., $\epsilon \le 2 \delta$ (see Assumption~\ref{ass31}(7)). 
We now focus on the non-vanishing term on the right of \eqref{Eq_FuncWeakConv-InProd-Eq-ThirdBinForm}. 
It can be further decomposed into the sum 
$T[ \FP ] +  \mathcal{R}_2[ \FP ]$ where
\be \label{Eq_MainTheo-Def-dummyVariable-T}
	    T[ \FP ]
	:=
	    2  \FP \sqrt{ N } \,
	    \left\langle  
	        ( S  - S_0 ) 
	        ( \hat{ \Pi }_{ k } [ \FP ] -  \Pi_{ k } ) 
	        + 
	        U_N [ \FP ] \hat{ \Gamma }^{ \dagger }_k [ \FP ], \;
	     S -   S_0  
	    \right\rangle
\ee
	and $ \mathcal{R}_2  [ \FP ]$ is another remainder term, defined as
$$
	\mathcal{R}_2  [ \FP ]:= 2  \FP\sqrt{ N } 
	\left\langle  
		( S  - S_0 ) ( \hat{ \Pi }_{ k } [ \FP ] -  \Pi_{ k } ) + U_N  [ \FP ]  \hat{ \Gamma}^{ \dagger }_k [ \FP ], \;
		  ( S -   S_0 ) ( \Pi_{ k }   - \operatorname{Id} ) 
	\right\rangle.
$$

	By the Cauchy--Schwarz inequality one has
$$
		| \mathcal{R}_2[\FP] |
	\le
		 2   \FP\sqrt{ N } \mathcal{R}_1  [ \FP ]  
		  \vvvert ( S -   S_0 ) ( \Pi_{ k }    - \operatorname{Id} ) \vvvert.
$$
From \eqref{rate_R_1} we know that 
$
\sqrt{N} \mathcal{R}_1[\FP]
=
\mathcal{O}_{\PP}(k^{\SM/2+1/2+\epsilon})
$  and according to our  discussion of Assumption \ref{ass31}(7)  (see Remark~\ref{rem1} (c)) 
$
\vvvert ( S -   S_0 ) ( \Pi_{ k }    - \operatorname{Id} ) \vvvert 
= 
\mathcal{O}(k^{-\SM\beta})$. Hence it follows that 
$$
    \mathcal{R}_2[\FP]
 =
    \mathcal{O}_{ \PP } 
    \left( k^{ \SM/2 + 1/2 + \epsilon - \SM \beta } \right),
$$
which is $o_{\PP}(1)$ for a sufficiently small choice of $\epsilon$, namely $\epsilon < ( \SM + 1 ) /2$ where we used $\beta>1+1/\SM$ (again see discussion of Assumption~\ref{ass31}(7) in Remark~\ref{rem1} (c)).
We now analyze the non-vanishing term $T[\FP]$ defined in equation \eqref{Eq_MainTheo-Def-dummyVariable-T}. Recall that by Assumption~\ref{ass31}(1) two Hilbert--Schmidt operators $R, R_0$ exist, such that $S=R \Gamma^\beta $ and $S_0=R_0 \Gamma^\beta$.  It thus follows that
\begin{align*}
	T[\FP]
	= &2  \FP\sqrt{ N } \,
	 \left\langle  ( R -   R_0 ) \Gamma^\beta  ( \hat{ \Pi }_{ k } [ \FP ] -  \Pi_{ k } ) + U_N[\FP]  \hat{ \Gamma}^{ \dagger }_k [ \FP ], \;
	   ( R -   R_0 ) \Gamma^\beta \right\rangle 
	 \\
=&
	2  \FP\sqrt{ N } \,
	 \left\langle \left\{  ( R -   R_0 ) \Gamma^\beta   ( \hat{ \Pi }_{ k } [ \FP ] -  \Pi_{ k } ) + U_N[\FP]  \hat{ \Gamma}^{ \dagger }_k [ \FP ] \right\}
		\Gamma^{ \beta }, \;
	   R  - R_0   \right\rangle   	
\end{align*}
	Notice that we have "shifted" the smoothing operator $\Gamma^\beta$ from the right side of the inner product to the left. This shift can be validated by basic calculations. However it can be seen more easily as an application of the cyclical property for the trace of operators (see \cite{HorKokBook12}, Section~13.5 for details). 
	Finally, we use  
	Lemma~\ref{Lem_Projection-Statistics-Smoothed-Right-and-Left}
to replace 
$ \FP \Gamma^{ \beta }  ( \hat{ \Pi }_{ k }  -  \Pi_{ k } )\Gamma^{ \beta }$ 
and 
$\FP U_N[\FP]  \hat{ \Gamma}^{ \dagger }_k [ \FP ]\Gamma^{ \beta }$ on the right, by their asymptotic linearizations, which yields 
$ 	
	T[ \FP ]
=
    T'[ \FP ]+o_{ \PP }( 1 )$, 
    where
\begin{equation} \label{Eq_def_T_prime}
    T'[ \FP ]=2  \sqrt{ N } \,
	 \left\langle  ( R  - R_0 )  L (\hat \Gamma_N[\FP]-\FP \Gamma)  + U_N[\FP]   \Gamma^{ \beta -1} , \;
	   R  - R_0   \right\rangle.
\end{equation}	
Here $L$ is a linear map defined in Definition~\ref{Def_LinerazationOperat}. Our proof up to this point implies the (asymptotic) stochastic linearization 
$$
\sqrt{N} \left\{ 
			\vvvert \hat{ S }_N [ \FP ] - S_0  \hat{ \Pi }_{ k } [ \FP ] \vvvert^2  
			-
			\vvvert  S \Pi_{ k } - S_0 \Pi_{ k } \vvvert^2 
		\right\}  = T'[ \FP ]+o_{ \PP }( 1 ).
$$
Now, having completed the linearization we still have to show that $T'[ \FP ]$ converges weakly to $\tau \mathbb{B}[ \FP ]$. By definition of $T'[ \FP ]$ (in \eqref{Eq_def_T_prime}), it can be written as a real valued, sequential sum process of $\phi$-mixing random variables. Therefore, we can  apply the invariance principle from Corollary~2.6 in \cite{herrndorf1983} (where Condition~(E) can be verified using our Assumption~\ref{ass31}(2), with $\delta:= \kappa/2$). This directly implies
$T'[ \FP ] \stackrel{d}{ \to } \tau \mathbb{B}[ \FP ]$,
where $\tau^2$ is the long-run  variance of the statistic. It is now easy to calculate that  $\tau^2$ has the form postulated in \eqref{def_long_run_variance}, which
 concludes the proof. \hfill $\square$\\

In the subsequent sections we show several bounds for the remainder terms, we have used throughout the proof of Theorem~\ref{theorem_1}.

%
%
%
%
%
%
%
%
%

\subsection{Bounds for $\mathcal{R}_1$}

    In the next Lemma~\ref{Lem_Bounds-NonSmoothed-Var-and-Bias-of-S_N} we give orders of magnitude for 
$
    \vvvert 
    U_N [ \FP ]  \hat{ \Gamma}^{ \dagger }_k [ \FP ] 
    \vvvert
    $ 
    and 
    $  
    \vvvert 
    \Gamma^\beta( \hat{ \Pi }_{ k } [ \FP ] -  \Pi_{ k } )  \vvvert_\mathcal{L}.
    $
Together with \eqref{Eq_R_1_bound_op_norm} (and the following part up to \eqref{rate_R_1}) these imply that $\mathcal{R}_1$ satisfies the decay rate \eqref{rate_R_1}.

\begin{lem}\label{Lem_Bounds-NonSmoothed-Var-and-Bias-of-S_N}
	Under the conditions of Theorem~\ref{theorem_1}, it holds that

\begin{itemize} 
	\item[ i) ] 
$$
		\sup_{ \FP \in \left[1/2, 1 \right] } \vvvert \Gamma^{ \beta } ( \hat{ \Pi }_{ k } [ \FP ] -  \Pi_{ k } ) \vvvert_{ \mathcal{ L } }
	=
		\mathcal{ O }_{ \PP } ( 1 / \sqrt{ N } ).
$$
	\item[ ii) ] For any $\epsilon>0$
$$
		\sup_{\FP \in [1/2, 1 ] } \vvvert  U_N[\FP ]  \hat{ \Gamma}^{ \dagger}_k[\FP ]  \vvvert
	=
		\mathcal{ O }_{ \PP } ( k^{ \SM/2+1/2+\epsilon }  / \sqrt{N} )	.
$$

%

\end{itemize}
	
\end{lem}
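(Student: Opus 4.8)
The plan is to prove the two bounds separately, relying throughout on Theorem~\ref{Theo_StochBoundFuncCovOp}, on elementary eigenvalue/eigenvector perturbation estimates, and on a refined second-moment bound for the coefficients coming from Assumption~\ref{ass31}(4). First I would record some preliminaries. Telescoping the eigengap bound in Assumption~\ref{ass31}(6) gives $\lambda_i \asymp i^{-\SM}$; combined with Theorem~\ref{Theo_StochBoundFuncCovOp} (so that $\sup_{\FP}\vvvert\hat\Gamma_N[\FP]/\FP - \Gamma\vvvert = \mathcal{O}_{\PP}(1/\sqrt{N})$), Weyl's inequality, and the fact that $1/\sqrt N = o(k^{-\SM-1})$ from Assumption~\ref{ass31}(7), this yields $\hat\lambda_i[\FP] \asymp \lambda_i$ for all $i\le k$, $\FP\in[1/2,1]$, with probability tending to one, and it makes the perturbation expansions below licit uniformly in $\FP$. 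The second preliminary is that Assumption~\ref{ass31}(4) together with the moments in (2) and the mixing in (3) gives, via a maximal inequality for $\phi$-mixing sums (see \cite{moricz1982}, \cite{herrndorf1983}), the uniform bound $\sup_{\FP}\|(\hat\Gamma_N[\FP]/\FP - \Gamma)e_i\| = \mathcal{O}_{\PP}(\sqrt{\lambda_i/N})$; the gain of the factor $\sqrt{\lambda_i}$ over the crude $\mathcal{O}_{\PP}(1/\sqrt N)$ is what ultimately produces the stated rates.

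For part i) I would use the projection identity $\hat\Pi_k[\FP] - \Pi_k = \hat\Pi_k[\FP](\operatorname{Id} - \Pi_k) - (\operatorname{Id} - \hat\Pi_k[\FP])\Pi_k$ and premultiply by $\Gamma^\beta$. For the second summand, writing $(\operatorname{Id}-\hat\Pi_k[\FP])\Pi_k = \sum_{i\le k}\big((\operatorname{Id}-\hat\Pi_k[\FP])e_i\big)\otimes e_i$ and expanding $(\operatorname{Id}-\hat\Pi_k[\FP])e_i$ to first order in $\hat\Gamma_N[\FP]/\FP - \Gamma$ (higher-order terms being $o_{\PP}(1/\sqrt N)$ by Assumption~\ref{ass31}(7)), one is led to bound $\big\|\sum_{j>k}\tfrac{\lambda_j^\beta}{\lambda_i-\lambda_j}\langle(\hat\Gamma_N[\FP]/\FP-\Gamma)e_i,e_j\rangle e_j\big\|$; the key point is $\sup_{i\le k<j}\lambda_j^\beta/|\lambda_i-\lambda_j| \lesssim 1$, whose worst case $i=k$, $j=k+1$ gives $\lambda_{k+1}^\beta/(\lambda_k-\lambda_{k+1}) \lesssim k^{\SM(1-\beta)+1}$, bounded precisely because $\beta > 1 + 1/\SM$. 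Hence this summand has operator norm $\lesssim \vvvert(\hat\Gamma_N[\FP]/\FP-\Gamma)\Pi_k\vvvert = \mathcal{O}_{\PP}(1/\sqrt N)$, uniformly in $\FP$. The first summand $\Gamma^\beta\hat\Pi_k[\FP](\operatorname{Id}-\Pi_k) = \sum_{j>k}\big(\Gamma^\beta\hat\Pi_k[\FP]e_j\big)\otimes e_j$ is treated similarly, now summing squared norms over $j>k$ and splitting the inner index range $i\le k$ into $i\le k/2$ (where $|\lambda_i-\lambda_j| \gtrsim \lambda_i$) and $k/2 < i \le k$ (where $|\lambda_i-\lambda_j| \gtrsim (k-i+1)k^{-\SM-1}$); feeding in the refined coefficient bound $\|(\hat\Gamma_N[\FP]/\FP-\Gamma)e_i\|^2 = \mathcal{O}_{\PP}(\lambda_i/N)$ and using $\beta > 1 + 1/\SM$ once more, both ranges contribute $\mathcal{O}_{\PP}(1/N)$ to the sum of squares, giving $\mathcal{O}_{\PP}(1/\sqrt N)$ for the norm.

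For part ii) I would first use the composition rule for outer products, $(g\otimes f)(g'\otimes f') = \langle f,g'\rangle\, g\otimes f'$, to write $U_N[\FP]\hat\Gamma_k^\dagger[\FP] = \sum_{i=1}^k v_i[\FP]\otimes\hat e_i[\FP]$ with $v_i[\FP] := \tfrac{1}{N\hat\lambda_i[\FP]}\sum_{n=1}^{\lfloor N\FP\rfloor}\langle X_n,\hat e_i[\FP]\rangle\varepsilon_n \in H_2$; since $\{\hat e_i[\FP]\}_{i\le k}$ is orthonormal this gives the exact identity $\vvvert U_N[\FP]\hat\Gamma_k^\dagger[\FP]\vvvert^2 = \sum_{i\le k}\|v_i[\FP]\|^2$. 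Using $\hat\lambda_i[\FP] \gtrsim \lambda_i$ and then replacing the data-dependent direction $\hat e_i[\FP]$ by $e_i$ inside the sum --- the error being $U_N[\FP][\hat e_i[\FP]-e_i]$, controlled via $\vvvert U_N[\FP]\vvvert_{\mathcal L} = \mathcal{O}_{\PP}(1/\sqrt N)$ (weak exogeneity, Assumption~\ref{ass31}(5), plus the mixing invariance principle for $\sqrt N\,U_N[\cdot]$) and the eigenvector perturbation bound --- reduces matters to $\sup_{\FP}\big\|\tfrac1N\sum_{n\le\lfloor N\FP\rfloor}\langle X_n,e_i\rangle\varepsilon_n\big\|$, a centered $\phi$-mixing sequential sum in $H_2$. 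A maximal inequality together with Assumption~\ref{ass31}(4) (which pulls a factor $\lambda_i$ out of the second moment of $\langle X_n,e_i\rangle\varepsilon_n$) and (2)--(3) yields $\sup_{\FP}\big\|\tfrac1N\sum_{n\le\lfloor N\FP\rfloor}\langle X_n,e_i\rangle\varepsilon_n\big\|^2 = \mathcal{O}_{\PP}(\lambda_i/N)$, uniformly in $i\le k$. Summing, $\sum_{i\le k}\|v_i[\FP]\|^2 \lesssim \sum_{i\le k}\lambda_i^{-2}\cdot\lambda_i/N \lesssim \tfrac1N\sum_{i\le k}i^{\SM} \lesssim k^{\SM+1}/N$, whence $\sup_{\FP}\vvvert U_N[\FP]\hat\Gamma_k^\dagger[\FP]\vvvert = \mathcal{O}_{\PP}(k^{(\SM+1)/2}/\sqrt N)$; the extra $k^{\epsilon}$ in the statement absorbs the loss from the maximal inequality (only $4+\kappa$ moments are available) and from the union bound over $i\le k$ and over $\FP$ needed to make the eigenvalue lower bound and the replacement step hold simultaneously and uniformly.

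The hard part is the uniform-in-$\FP$ bookkeeping in part ii): one must simultaneously, over all $i\le k$ and all $\FP\in[1/2,1]$, replace the random direction $\hat e_i[\FP]$ by $e_i$ and control the fluctuations of the partial sums $\tfrac1N\sum_{n\le\lfloor N\FP\rfloor}\langle X_n,e_i\rangle\varepsilon_n$, under $\phi$-mixing and with only $4+\kappa$ moments --- this is exactly where the slack $\epsilon$ (covered by $\delta$ in Assumption~\ref{ass31}(7)) is spent, and it is also the step that genuinely needs Assumption~\ref{ass31}(4): without the $\lambda_i$-weighting the sum $\sum_{i\le k}\lambda_i^{-2}/N$ would be of order $k^{2\SM+1}/N$ and the sharper rate $k^{(\SM+1)/2}/\sqrt N$ --- rather than $k^{\SM+1/2}/\sqrt N$, which would be too weak for the $o_{\PP}(1)$ conclusions later in the proof of Theorem~\ref{theorem_1} --- would be lost. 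In part i) the corresponding delicate point is algebraic: the eigengap-inflation factor $k^{\SM+1}$ must be killed by the smoothing $\Gamma^\beta$, which works exactly under $\beta > 1 + 1/\SM$.
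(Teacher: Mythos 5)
Your overall route is genuinely different from the paper's, and the main-term bookkeeping in part ii) is attractive: the exact identity $\vvvert U_N[\FP]\hat\Gamma_k^\dagger[\FP]\vvvert^2=\sum_{i\le k}\|v_i[\FP]\|^2$ with $v_i[\FP]=\hat\lambda_i[\FP]^{-1}\tfrac1N\sum_{n\le\lfloor N\FP\rfloor}\langle X_n,\hat e_i[\FP]\rangle\varepsilon_n$, together with $\E[\langle X_0,e_i\rangle^2\|\varepsilon_0\|^2]\lesssim\lambda_i$ from Assumption~\ref{ass31}(4), does give the correct order $k^{(\SM+1)/2}/\sqrt N$ for the sum with $e_i$ in place of $\hat e_i[\FP]$ (the paper reaches the same rate by the factorization $\vvvert U_N[\FP]\hat\Gamma_k^\dagger[\FP]\vvvert\le\vvvert U_N[\FP]\Gamma^{-\zeta}\vvvert\,(\vvvert\Gamma^{\zeta}(\hat\Gamma_k^\dagger[\FP]-\Gamma_k^\dagger/\FP)\vvvert_{\mathcal L}+\vvvert\Gamma^{\zeta}\Gamma_k^\dagger/\FP\vvvert_{\mathcal L})$ with $\zeta=1/2-1/(2\SM)-\epsilon$, using Assumption~\ref{ass31}(4) only to show $\E\|X_0\Gamma^{-\zeta}\|^4<\infty$). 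The genuine gap is your replacement step. Controlling the error $\hat\lambda_i[\FP]^{-1}U_N[\FP](\hat e_i[\FP]-e_i)$ by $\vvvert U_N[\FP]\vvvert_{\mathcal L}\,\|\hat e_i[\FP]-e_i\|$ with the only available eigenvector bound (operator norm over eigengap, Lemma~\ref{Lem_bounds_op}~iii) with Assumption~\ref{ass31}(6)) gives a contribution of order $\lambda_i^{-1}\cdot N^{-1/2}\cdot i^{\SM+1}N^{-1/2}\asymp i^{2\SM+1}/N$ per index, hence of order $k^{2\SM+3/2}/N$ after summing squares. For this to be $\mathcal O_\PP(k^{\SM/2+1/2+\epsilon}/\sqrt N)$ one needs $k^{3\SM/2+1-\epsilon}\lesssim\sqrt N$, while Assumption~\ref{ass31}(7) only provides $k^{\SM+1+\delta}=o(\sqrt N)$; the deficit $k^{\SM/2-\delta-\epsilon}$ diverges whenever $\SM/2>\delta+\epsilon$, so the lemma's ``for any $\epsilon>0$'' claim is not obtained for general $\SM$ — the $k^\epsilon$ slack cannot absorb a polynomial loss of order $k^{\SM/2}$.

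To close this you must let the smoothing act on the error term itself, e.g.\ expand $\hat e_i[\FP]-e_i$ in the basis $\{e_q\}$ via the resolvent identity \eqref{Eq_IdentInprodDiff2Eigvect-with-other-Eigvect} and use $\|U_N[\FP]e_q\|=\mathcal O_\PP(\sqrt{\lambda_q/N})$ coordinatewise (rather than $\vvvert U_N[\FP]\vvvert_{\mathcal L}$ globally); carried out uniformly in $\FP$ this essentially reproduces the paper's treatment of the terms $B_1,\dots,B_4$ in its proof of part ii), which is precisely where the work of the lemma sits. A similar, though less damaging, remark applies to your part i): the decomposition $\hat\Pi_k[\FP]-\Pi_k=\hat\Pi_k[\FP](\operatorname{Id}-\Pi_k)-(\operatorname{Id}-\hat\Pi_k[\FP])\Pi_k$ and the ratio bound $\sup_{i\le k<j}\lambda_j^\beta/|\lambda_i-\lambda_j|\lesssim1$ under $\beta>1+1/\SM$ parallel the paper's $A_1,A_2,A_3$ analysis (which, note, does not need your refined coefficient bound there — the crude $\vvvert\hat\Gamma_N[\FP]-\FP\Gamma\vvvert=\mathcal O_\PP(1/\sqrt N)$ suffices once the ratio bound is in hand); but your ``higher-order terms are $o_\PP(1/\sqrt N)$ by Assumption~\ref{ass31}(7)'' is asserted rather than shown, and those quadratic terms (the analogues of $A_3$ and of the diagonal terms handled via \eqref{Eq_IdentSqauNorm-of-Normal-Vect}) are exactly what a complete proof must control uniformly in $\FP$ and $i\le k$.
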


\begin{proof} \mbox{}
	\begin{itemize}
	\item[ i) ]

We use \eqref{Eq_DefSeqPseudoInvs-and-SeqProj} and the decomposition 
$$
\Gamma^{ \beta } ( \hat{ \Pi }_{ k } [ \FP ] -  \Pi_{ k }) =A_1[ \FP ]+A_2[ \FP ]+A_3[ \FP ],
$$ 
where
\begin{align} 
	    A_1[ \FP ]
	&:=\sum_{ i = 1 }^{ k } 
    	\Gamma^{ \beta } 		
		 (  \hat{ e }_i [ \FP ]  - e_i )  \otimes e_i 
\label{Eq_UnsmoothRemProj-A1}
\\	
		A_2[ \FP ]
	&:=\sum_{ i = 1 }^{ k } 
	    \Gamma^{ \beta }   e_i    \otimes ( \hat{ e }_i [ \FP ]  - e_i )
\label{Eq_UnsmoothRemProj-A2}
\\	   
		A_3[ \FP ]
	&:=\sum_{ i = 1 }^{ k } 
	    \Gamma^{ \beta }  
	    ( \hat{ e }_i [ \FP ]  - e_i )  \otimes  ( \hat{ e }_i [ \FP ]  - e_i )   
\label{Eq_UnsmoothRemProj-A3}		.
\end{align}
	 We now show the desired rate for each term separately. 
	\begin{enumerate}
	\item[$A_1$) ]
	   Recall the spectral decomposition of the covariance operator $\Gamma= \sum_{q\ge 1} \lambda_q e_q \otimes e_q$, which yields
	   $$
	   A_1[ \FP ] 
	  = 
	   \sum_{ i = 1 }^{ k }  \sum_{ q \ge 1 } \lambda_q^{ \beta }
	   \langle  \hat{ e }_i [ \FP ]  - e_i , e_q \rangle
	   e_q \otimes e_i.
	   $$
	   Separating the terms where $i=q$ from the ones where $i \neq q   $, we decompose  
	   $$ 
	   A_1 [ \FP ] = A_{1,1}[ \FP ] + A_{1,2}[ \FP ],
	   $$ with
\begin{align}
        A_{1,1}[ \FP ]
    &:=
        \sum_{ i = 1 }^{ k } \lambda_i^{ \beta }
        \langle  \hat{ e }_i [ \FP ]  - e_i , e_i \rangle
	   (e_i \otimes e_i)
	 =
	    \sum_{ i = 1 }^{ k }  
    	 \frac{ - \lambda^{  \beta}_i }{ 2 }  
    	 \left\|  \hat{ e }_i [ \FP ]  - e_i  \right\|^2 
    	 ( e_i \otimes e_i ),
\label{Eq_UnsmoothRemProj-A11}
\\
        A_{1,2}[ \FP ]
    &:=
        \sum_{ i = 1 }^{ k } 
	    \sum_{ q \neq i } \lambda_q^{ \beta }
	    \langle  \hat{ e }_i [ \FP ]  - e_i , e_q \rangle    ( e_q \otimes e_i ).
 \label{Eq_UnsmoothRemProj-A12}       
\end{align}
     Notice that we used identity 
    $
    \langle  \hat{ e }_i [ \FP ]  - e_i , e_i \rangle
    =
    - \left\|  \hat{ e }_i [ \FP ]  - e_i  \right\|^2/2
    $
      in \eqref{Eq_UnsmoothRemProj-A11} (see Appendix \ref{misc}, Lemma \ref{Lem_bounds_op}).
     We bound the operator norm of \eqref{Eq_UnsmoothRemProj-A11} and \eqref{Eq_UnsmoothRemProj-A12} individually. \\
     \fbox{$A_{1,1}$:} Recall that the operator norm of a diagonal operator equals its largest, absolute diagonal entry, i.e.
\begin{equation} \label{Eq_deal_with_ev}
        \vvvert A_{1,1}[\FP]\vvvert_\mathcal{L} 
    = 
        \max_{1 \le i \le k}
    	 \frac{  \lambda^{  \beta}_i }{ 2 }  
    	 \left\|  \hat{ e }_i [ \FP ]  - e_i  \right\|^2.
\end{equation}
Further, using the inequality
    $$
    \left\|  \hat{ e }_i [ \FP ]  - e_i  \right\|
    \leq \frac{2 \sqrt{2}
    \vvvert \hat{\Gamma}_N[ \FP ] - \FP \Gamma\vvvert_\mathcal{ L }^2
    }{ 
     \FP \min( \lambda_{ i - 1 }- \lambda_i, \lambda_i - \lambda_{ i + 1 } )},
    $$
    (see Appendix \ref{misc} Lemma \ref{Lem_bounds_op}) we have 
$$
        \max_{1 \le i \le k}
	     \frac{  \lambda^{  \beta}_i }{ 2 }  
	     \left\|  \hat{ e }_i [ \FP ]  - e_i  \right\|^2 
	 \le 
	    \max_{1 \le i \le k}
	    \frac{  2 \sqrt{2} 
	        \lambda^{  \beta}_i 
	        \vvvert \hat{\Gamma}_N[ \FP ] - \FP \Gamma\vvvert_\mathcal{ L } 
	     }{
	          \min( \lambda_{ i - 1 }- \lambda_i, \lambda_i - \lambda_{ i + 1 } )
	    },
$$
    where we have used that $1/\FP \le 2$.
    We now simplify the right side by Lemma~\ref{Lem_Bound-Ratio-Eigenvalues} part $iii)$ and the fact that $\vvvert\hat \Gamma_N[\FP]- \FP \Gamma\vvvert_\mathcal{L} =  \mathcal{O}_{\PP}(1/\sqrt{N})$ 
    (Theorem~\ref{Theo_StochBoundFuncCovOp}). Together these show that
$$
	\sup_{ \FP \in \left[1/2, 1 \right] }\vvvert A_{1,1}[\FP]\vvvert_\mathcal{L} 
	=	
		\mathcal{ O }_{ \PP } ( 1/ N ).
$$
	
	\fbox{$A_{1,2}$:}
	Since  the norm inequality
	$\vvvert \cdot \vvvert_{ \mathcal{ L } } \le  \vvvert \cdot \vvvert$ holds,
	 it is enough to show that  
	 $$
	 \sup_{\FP \in [1/2,1]}\vvvert A_{1,2}[\FP]  \vvvert = \mathcal{O}_\PP(1/\sqrt{N}).
	 $$
	 The identity~\eqref{Eq_IdentInprodDiff2Eigvect-with-other-Eigvect} implies that, 
	 $
	 \langle  \hat{ e }_i [ \FP ]  - e_i , e_q \rangle 
	 = 
	 \langle  \hat{ \Gamma }_N[ \FP ]  - \FP \Gamma,
	        \hat{ e }_i [ \FP ] \otimes e_q 
	  \rangle
	  /
	  ( \hat{ \lambda }_i [ \FP ] - \FP \lambda_q ).
	 $ 
    We can now upper bound the Hilbert--Schidt norm of $A_{1,2} [ \FP ]$ as follows:
\begin{align} \label{bound_A_12}
    	\vvvert  A_{1,2} [ \FP ] \vvvert 
	= &
	    \left(
		\sum_{ i = 1 }^{ k } \sum_{ q \neq i }
		\lambda_q^{ 2 \beta } 
		 \frac{ 
		 \langle  \hat{ \Gamma }_N [ \FP ]  - \FP \Gamma , \hat{ e }_i [ \FP ]  \otimes e_q \rangle^2
		}{ 
		( \hat{ \lambda }_i [ \FP ] - \FP \lambda_q )^2
		}
		\right)^{ 1 / 2 } \\
    \le &
        \max_{ i \le k, q \neq i }  \frac{ \lambda_q^{  \beta }  }{ | \hat{ \lambda }_i [ \FP ] - \FP \lambda_q |	}
        \left(
		\sum_{ i = 1 }^{ k } \sum_{ q \neq i }
		 \langle  \hat{ \Gamma }_N [ \FP ]  - \FP \Gamma , \hat{ e }_i [ \FP ]  \otimes e_q \rangle^2
		\right)^{ 1 / 2 } \nonumber\\
	\le &
		\max_{ i \le k, q \neq i }  \frac{ \lambda_q^{  \beta }  }{ | \hat{ \lambda }_i [ \FP ] - \FP \lambda_q |	}
		\vvvert  \hat{ \Gamma }_N [ \FP ]  - \FP \Gamma \vvvert. \nonumber	
\end{align}
    Here we have used that $\{\hat{ e }_i [ \FP ]\}_{i \in \N}, \{e_q\}_{q \in \N}$ are ONBs and thus their products $\{\hat{ e }_i [ \FP ] \otimes e_q \}_{i, q \in \N}$ form an ONB of the Hilbert--Schmidt operators (see Section \ref{Subsection_Operators}). 
    The fraction of the eigenvalues is uniformly of order $\mathcal{ O }_{ \PP }( 1 )$ by Lemma~\ref{Lem_Bound-Ratio-Eigenvalues}, part $ii)$ whereas $\vvvert  \hat{ \Gamma }_N[ \FP ]  - \FP \Gamma \vvvert = \mathcal{ O }_{ \PP }( 1 / \sqrt{ N } )$ by Theorem~\ref{Theo_StochBoundFuncCovOp}. 
    
    Combining both estimates  gives the (uniform) order $\mathcal{ O }_{ \PP }( 1/\sqrt{N} )$ for the term $A_1$.

	\item[ $A_2$) ]
	As $\Gamma^\beta e_i = \lambda_i^\beta e_i$ we can rewrite the operator $A_2[ \FP ]$ (defined in \eqref{Eq_UnsmoothRemProj-A2}) as follows
$$
	 A_2[ \FP ]=\sum_{ i = 1 }^{ k }  \lambda_i^{ \beta } e_i \otimes ( \hat{ e }_i [ \FP ]  - e_i ).
$$
	Then applying the Fourier expansion   
	$
	\hat{ e }_i [ \FP ]  - e_i 
	=
	\sum_{q \ge 1}  \langle  \hat{ e }_i [ \FP ]  - e_i , e_q \rangle e_q
	$, we can expand
	$A_2 [ \FP ] $  into 
\begin{align*}
    & \sum_{ i = 1 }^{ k }\sum_{q \ge 1}   \lambda_i^{ \beta } \langle  \hat{ e }_i [ \FP ]  - e_i , e_q \rangle e_i \otimes e_q \\ 
    = &
	 \sum_{ i = 1 }^{ k }   \frac{ - \lambda^{  \beta}_i }{ 2 }  \left\|  \hat{ e }_i [ \FP ]  - e_i  \right\|^2 e_i \otimes e_i
	+
	\sum_{ i = 1 }^{ k } \sum_{ q \neq i }  
	\lambda_i^{ \beta } \langle \hat{ e }_i [ \FP ]  - e_i, e_q \rangle  e_i \otimes e_q.
\end{align*}
    In the second line we have split up the terms for $q =i$ and $q \neq i$ and have employed identity~\eqref{Eq_IdentSqauNorm-of-Normal-Vect} for the first term.
	Proceeding as for $A_1 [ \FP ]$ now yields the  (uniform) order of $\mathcal{ O }_{ \PP }( 1 / \sqrt{ N } )$ for the term $A_2 [ \FP ]$.

	\item[ $A_3$) ] Finally we turn to $A_3[ \FP ]$ in \eqref{Eq_UnsmoothRemProj-A3}. Again we use the spectral decomposition $\Gamma= \sum_{q\ge 1} \lambda_q e_q \otimes e_q$ to rewrite this term as 
$$
    A_3[ \FP ] 
    = \sum_{ i = 1 }^{ k } \sum_{  q \ge 1 } 
    \lambda_q^{ \beta }
    \langle  \hat{ e }_i [ \FP ]  - e_i, e_q \rangle
    e_q \otimes (\hat{ e }_i [ \FP ]  - e_i ).
$$
    Employing the Fourier expansion of 
	$ \hat{ e }_i [ \FP ]  - e_i$ for the right factor of the outer product, gives the further expansion
$$
     \sum_{ i = 1 }^{ k } \sum_{  q \ge 1 } \sum_{  l \ge 1 } 
    \lambda_q^{ \beta }
    \langle  \hat{ e }_i [ \FP ]  - e_i, e_q \rangle
    \langle  \hat{ e }_i [ \FP ]  - e_i, e_l \rangle
    e_q \otimes e_l.
$$
    Though superficially more complicated, this expansion in terms of the product basis $\{e_q \otimes e_l\}_{ q, l \in \N}$ (see Section \ref{Subsection_Operators}) is handy, to decompose $A_3[ \FP ] $ into more simple parts. More precisely we set
	$$
	A_3[ \FP] = \sum_{m =1 }^4 A_{3,m }[ \FP ] ,
	$$ where
\begin{align*}
        A_{3,1}[ \FP ]
    &:=
        \sum_{ i = 1 }^{ k } \lambda_i^{  \beta } 
	     \langle  \hat{ e }_i [ \FP ]  - e_i, e_i \rangle^2
	    e_i \otimes e_i,	
\\
	    A_{3,2}[ \FP ]
	&:=
    	\sum_{ i = 1 }^{ k } 
    	\sum_{  q \neq i } \lambda_q^{ \beta }
	   	\langle  \hat{ e }_i [ \FP ]  - e_i, e_i \rangle 
		\langle  \hat{ e }_i [ \FP ]  - e_i, e_q \rangle 
		e_q \otimes e_i
\\
        A_{3,3}[ \FP ]
    &:=
	    \sum_{ i = 1 }^{ k } 
	    \sum_{  q \neq i } \lambda_i^{ \beta }
		\langle  \hat{ e }_i [ \FP ]  - e_i, e_i \rangle 
		\langle  \hat{ e }_i [ \FP ]  - e_i, e_q \rangle 
		e_i \otimes e_q, 
\\
	    A_{3,4}[ \FP ]
	&:=
	    \sum_{ i = 1 }^{ k }
	    \sum_{  l, q \neq i } \lambda_q^{ \beta }
	 	\langle  \hat{ e }_i [ \FP ]  - e_i, e_q \rangle 
		\langle  \hat{ e }_i [ \FP ]  - e_i, e_l \rangle 
		e_q \otimes e_l
\end{align*}
    
     We can now prove the uniform rate of $\mathcal{O}_{\PP}(1/\sqrt{N})$ for each of these terms individually.
     
     \fbox{$A_{3,1}$:}	
	Identity~\eqref{Eq_IdentSqauNorm-of-Normal-Vect} in Appendix \ref{misc} implies that
	$
	\langle  \hat{ e }_i [ \FP ]  - e_i, e_i \rangle^2
	= 
	\|  \hat{ e }_i [ \FP ]  - e_i \|^4 /4.
	$
	Now notice that $A_{3,1}[ \FP ]$ is a positive definite,  diagonal operator, such that its operator norm equals its largest diagonal value, which implies
$$
	    \sup_{ \FP \in \left[1/2, 1 \right] } \vvvert A_{3,1}[ \FP ] \vvvert_\mathcal{L} = 
		\sup_{ \FP \in \left[1/2, 1 \right] }
		\max_{ i \le k }  \frac{  \lambda^{  \beta}_i }{ 4 }  \left\|  \hat{ e }_i [ \FP ]  - e_i  \right\|^4.
$$
	As before (in the analysis of $A_{1,1}$) we can use the inequality~\eqref{Eq_Inequalty-Norm-Diff-Eigenvect} and Lemma~\ref{Lem_Bound-Ratio-Eigenvalues}, to show that the right side is of order
	$
	\mathcal{ O }_{ \PP }( k^{ 3 ( \SM +1 ) } / N^2 )
=
	\mathcal{ O }_{ \PP }( 1 / \sqrt{ N } )$.\\
	
	We bound the Hilbert--Schmidt norm of the remaining terms (since 
	$
	\vvvert \cdot \vvvert_{ \mathcal{ L } } 
	\le  
	\vvvert \cdot \vvvert
	$
	) 	starting with the two middle ones.
	
	\fbox{$A_{3,2}$:}
    We begin by noticing that
\begin{align*}
    \vvvert A_{3,2}[ \FP ] \vvvert^2 = &
    	\sum_{ i = 1 }^{ k } \sum_{  q \neq i } \lambda_q^{ 2 \beta }
	\frac{ \|  \hat{ e }_i [ \FP ]  - e_i \|^4 }{ 4}
	 \langle  \hat{ e }_i [ \FP ]  - e_i, e_q  \rangle^2
	  \\
\leq &
     4
	\sum_{ i = 1 }^{ k } \sum_{  q \neq i } \lambda_q^{ 2 \beta }
	 \langle  \hat{ e }_i [ \FP ]  - e_i, e_q  \rangle^2
	 .
\end{align*}
	Here we have used that $ \|  \hat{ e }_i [ \FP ]  - e_i \| \le  \|  \hat{ e }_i [ \FP ]  \|+\| e_i \| = 2$. Proceeding as for the term $A_{1, 2}[ \FP ]$, we see that $\vvvert A_{3,2}[\FP] \vvvert$ is uniformly of order $\mathcal{ O }_{ \PP }( 1 / \sqrt{ N } )$.
	
	\fbox{$A_{3,3}$:}
	The proof runs among exactly the same lines as for  $A_{3, 2}[ \FP ]$: We first observe that
$$
    \vvvert A_{3,3}[ \FP ] \vvvert^2 \le 4
	\sum_{ i = 1 }^{ k } \sum_{  q \neq i } \lambda_i^{ 2 \beta }
	 \langle  \hat{ e }_i [ \FP ]  - e_i, e_q  \rangle^2
$$
	and subsequently proceed as for $A_{1,2}[\FP]$
    to show that $\vvvert A_{3,3}[\FP] \vvvert$ is uniformly of order $\mathcal{ O }_{ \PP }( 1 / \sqrt{ N } )$.

	\fbox{$A_{3,4}$:}
	A standard calculation shows that the Hilbert--Schmidt norm of $A_{3,4}[\FP]$ is bounded as follows 
%
\begin{align*}
    	   \vvvert A_{3,4}[ \FP ]  \vvvert  =
		&
		\bigg\{ \sum_{ q, l = 1 }
		\Big( \lambda_q^{  \beta } 
			\sum_{ i =1, l \neq i \neq q }^{ k }
			 \langle  \hat{ e }_i [ \FP ]  - e_i, e_q  \rangle
			  \langle  \hat{ e }_i [ \FP ]  - e_i, e_l  \rangle
		 \Big)^2 \bigg\}^{1/2}
\\
\le
&
    \bigg\{ \Big(
	 	\sum_{ i = 1 }^{ k }
	 	\sum_{ q \neq i } 
		\lambda_q^{ 2 \beta }
		\langle  \hat{ e }_i [ \FP ]  - e_i, e_q  \rangle^2
	\Big)
	\Big(
		\sum_{ i = 1 }^{ k } \sum_{ l \neq i } 
		\langle  \hat{ e }_i [ \FP ]  - e_i, e_l  \rangle^2
	\Big)\bigg\}^{1/2},
\end{align*}
	where we have applied Cauchy--Schwarz to the inner part.
	Next, bounding each factor by the same arguments as in the discussion of $A_{1,2}$ (see \eqref{bound_A_12}), we have
\begin{align} \label{Eq_UpperBound-of-TermA34-NonSmoothedRemProject}
        \vvvert A_{3,4} [ \FP ] \vvvert
    \leq \Big(
    \max_{ i \le k, q\neq i } 
       \frac{ \lambda_q^{ \beta } 
            }{
            | \hat{ \lambda }_i [ \FP ]  - \FP \lambda_q |
            }
    \vvvert  \hat{ \Gamma }_N [ \FP ]  - \Gamma\vvvert
    \Big)
    \,
    \Big(
    \max_{ i \le k, l\neq i } 
       \frac{ 1 }{ | \hat{ \lambda }_i [ \FP ]  - \FP \lambda_l | }
    \vvvert  \hat{ \Gamma }_N [ \FP ]  - \Gamma \vvvert \Big).
\end{align}
	By Lemma~\ref{Lem_Bound-Ratio-Eigenvalues} part $ii)$ it follows that
$$
        \max_{ i \le k, q\neq i } 
            \frac{\lambda_q^\beta }{ \hat{ \lambda }_i [ \FP ]  - \FP \lambda_q }
    =
       \mathcal{ O }_{ \PP }( 1  )
$$
and by part $i)$ of the same Lemma, that
$$
        \max_{ i \le k, q\neq i } 
        \frac{ 1 }{ \hat{ \lambda }_i [ \FP ]  - \FP \lambda_q }
    =
            \max_{ i \le k, q\neq i } 
        \frac{ 1 }{  \FP (\lambda_i   -  \lambda_q)} \mathcal{O}_\PP(1)
    =
         \mathcal{ O }_{ \PP }(  \sqrt{ N }  ).
$$
	Here we have used Assumption \ref{ass31}(7) in the second step (as the difference $\lambda_i   -  \lambda_q$ is lower bounded by $\min(\lambda_{i-1}   -  \lambda_i, \lambda_i   -  \lambda_{i+1})$). Combining this with 
	$
	\vvvert  \hat{ \Gamma }_N [ \FP ]  - \Gamma\vvvert 
	=
	\mathcal{ O }_{ \PP }( 1 / \sqrt{ N }  )$ (Theorem~\ref{Theo_StochBoundFuncCovOp}), it follows from \eqref{Eq_UpperBound-of-TermA34-NonSmoothedRemProject}, that
$$
        \sup_{\FP \in [1/2,1]} \vvvert A_{3,4} [ \FP ] \vvvert
	=
	   \mathcal{ O }_{ \PP }(  1 / \sqrt{ N }  ).
$$	
	
	\end{enumerate}

	\item[ ii) ]
		We now want to upper bound $\vvvert U_N [ \FP ]   \hat{ \Gamma}^{ \dagger}_k [ \FP ] \vvvert$. For this purpose consider the following decomposition:
\begin{equation} \label{Eq_UN_decomposition}
	\vvvert U_N [ \FP ]   \hat{ \Gamma}^{ \dagger}_k [ \FP ] \vvvert\le \vvvert U_N [ \FP ] \Gamma^{ - \zeta } \vvvert
	\left( 
	 \vvvert \Gamma^{ \zeta } ( \hat{ \Gamma}^{ \dagger}_k [ \FP ] - \Gamma^{ \dagger}_k/\FP ) \vvvert_{ \mathcal { L } }
	 +
	 \vvvert  \Gamma^{  \zeta }  \Gamma^{ \dagger}_k/\FP \vvvert_{ \mathcal { L } }
	 \right),
\end{equation}
    where $\zeta=1/2-1/(2\SM)-\epsilon$ and $\epsilon>0$ is a positive number specified later. For the inequality we have used Lemma \ref{Lem_NormIneqProd-of-HS-operat}.
    We now upper bound the factors on the right side of \eqref{Eq_UN_decomposition}, beginning with the norm $\vvvert U_N [ \FP ] \Gamma^{ - \zeta } \vvvert$, of which we show:
 \be \label{Eq_LemNonSmoothedBiasFirstTerm}
         \sup_{\FP \in [1/2,1]}\vvvert U_N [ \FP ] \Gamma^{ - \zeta } \vvvert
    =
        \mathcal{O}_{\PP}( k^\epsilon/\sqrt{N}).
\ee
    For this purpose we employ a result from \cite{moricz1982} (Theorem~3.1). The theorem is in its original form only formulated for real valued random variables, but the proof can be carried over mutatis mutandis to Hilbert space valued variables. It implies that the inequality
   \begin{equation} \label{Eq_Moricz}
   \E \sup_{\FP \in [1/2,1]}\vvvert \sqrt{N} U_N [ \FP ] \Gamma^{ - \zeta } \vvvert^2 \le \tilde C k^{2\epsilon}
   \end{equation}
   for some $\tilde C$, depending on $\epsilon$, but independent of $N$, if
   \begin{equation} \label{m_1}
      \E \bigg\vvvert \frac{1}{\sqrt{N}}\sum_{i=L}^H  \varepsilon_i \otimes X_i \Gamma^{-\zeta}  \bigg\vvvert^2 \le  C\frac{H-L}{N}
   \end{equation}
	holds for all $1 \le L \le H \le N$ and some $ C$, independent of $L$, $H$ and $N$. To verify \eqref{m_1} we observe that 
	\begin{align*}
	    & \E \bigg\vvvert \frac{1}{\sqrt{N}}\sum_{i=L}^H  \varepsilon_i \otimes X_i \Gamma^{-\zeta}  \bigg\vvvert^2 
	    =  \frac{1}{N}\sum_{i,j=L}^H \E  \langle  \varepsilon_i \otimes X_i \Gamma^{-\zeta}, \varepsilon_j \otimes  X_j \Gamma^{-\zeta}\rangle \\
	    =  & \frac{H-L}{N}\sum_{|h|=0}^{H-L-1} \left( 1-\frac{|h|}{H-L}\right) \E  \langle  \varepsilon_0 \otimes X_0 \Gamma^{-\zeta}, \varepsilon_h \otimes  X_h \Gamma^{-\zeta}\rangle \\
	    \le & \frac{H-L}{N}\sum_{h \in \Z}  \sqrt{\phi(h)}\E \|  \varepsilon_0 \otimes X_0 \Gamma^{-\zeta} \|^2,
	\end{align*}
	where we have used stationarity  for the second equality and $\phi-$mixing for the final inequality (for the covariance inequality for mixing we refer to \cite{dehling1983} equation~(3.17) with $s=r=2$).
	By Assumption~\ref{ass31}(3) the sum $\sum_{h \in \Z}  \sqrt{\phi(h)}$ is finite. Thus we only have to prove that $\E \|  \varepsilon_0 \otimes X_0 \Gamma^{-\zeta} \|<\infty$ to get \eqref{m_1}. By the Cauchy--Schwarz inequality it suffices to show $\E \|  \varepsilon_0\|^4, \E \| X_0 \Gamma^{-\zeta} \|^4<\infty$ separately, where  $\E \|  \varepsilon_0\|^4<\infty$ by assumption. For the remaining term note that 
\begin{align*}
    & \E \| X_0 \Gamma^{-\zeta} \|^4 
    = \E \left\|\sum_{n \in \N} \lambda^{-\zeta}_n \langle X_0, e_n \rangle e_n \right\|^4 = \E \left(\sum_{n \in \N} \lambda^{-2\zeta}_n\langle X_0, e_n \rangle^2 \right)^2 \\
    = & \sum_{m, n \in \N}\lambda^{-2\zeta}_n\lambda^{-2\zeta}_m \E \langle X_0, e_n \rangle^2  \langle X_0, e_m \rangle^2 \le  \sum_{m, n \in \N} \lambda^{-2\zeta}_n\lambda^{-2\zeta}_m \sqrt{\E \langle X_0, e_n \rangle^4 \E  \langle X_0, e_m \rangle^4 } \\
    \le  & \sum_{m, n \in \N} C \lambda^{-2\zeta}_n\lambda^{-2\zeta}_m \E \langle X_0, e_n \rangle^2 \E  \langle X_0, e_m \rangle^2 
    = C \left(  \sum_{n \in \N} \lambda_n^{- 2 \zeta + 1}\right)^2 \le  C \left(  \sum_{n \in \N} n^{\SM(2 \zeta-1)}\right)^2.
\end{align*}
	The last sum is finite, as by choice of $\zeta=1/2-1/(2\SM)-\epsilon$ we have $\SM(2 \zeta-1)<-1$. In the above calculations we have used the Cauchy--Schwarz inequality in the first,  Assumption~\ref{ass31}(4) in the second and  Assumption~\ref{ass31}(6) in the third inequality. 
	We have hence shown \eqref{m_1}, which -as we have argued- implies \eqref{Eq_Moricz}, which again implies \eqref{Eq_LemNonSmoothedBiasFirstTerm}.
	
	We now bound the second factor in \eqref{Eq_UN_decomposition} analyzing the term
$
	 \vvvert \Gamma^{ \zeta } ( \hat{ \Gamma}^{ \dagger}_k [ \FP ] - \Gamma^{ \dagger}_k/ \FP ) \vvvert_{ \mathcal { L } }.
$
	Notice that 
\begin{equation}\label{Eq_Bj_decomposition}
    	\hat{ \Gamma}^{ \dagger}_k [ \FP ] - \Gamma^{ \dagger}_k / \FP 
	= \sum_{i=1}^{k} \frac{\hat e_i \otimes \hat e_i}{\hat \lambda_i[ \FP ]}
	-\sum_{i=1}^{k} \frac{ e_i \otimes  e_i}{ \FP \lambda_i}
	=\sum_{ j = 1 }^4 B_{j} [ \FP ],
\end{equation}

	where
\begin{align*}
        B_{1} [ \FP ]
&
:=
		\sum_{ i =1 }^{ k } 
			( \hat{ e }_i [ \FP ]  - e_i ) \otimes \frac{ \hat{ e }_i [ \FP ] - e_i }{ \hat{ \lambda}_i [ \FP ] }
\\
        B_{2} [ \FP ]
&
:=
		\sum_{ i =1 }^{ k }  ( \hat{ e }_i [ \FP ]  - e_i ) \otimes \frac{ e_i }{ \hat{ \lambda}_i  [ \FP ] }
\\
        B_{3} [ \FP ]
&
:=
		\sum_{ i =1 }^{ k }  e_i \otimes \frac{  \hat{ e }_i [ \FP ]  - e_i  }{ \hat{ \lambda}_i  [ \FP ] }
\\
        B_{4} [ \FP ]
&
:=
		\sum_{ i =1 }^{ k } e_i \otimes e_i 
			\frac{ \FP  \lambda_i - \hat{ \lambda}_i  [ \FP ]  
			}{
			  \hat{ \lambda}_i  [ \FP ]   \lambda_i \FP }
\end{align*}

In the next step we have to show that 
\begin{equation} \label{Eq_rate_Bj}
\sup_{\FP \in [1/2,1]} \vvvert \Gamma^\zeta B_{j} [ \FP ] \vvvert_\mathcal{L}= \mathcal{O}_\PP (k^{(\SM+1)/2+\SM  \epsilon})
\end{equation}
for each $1 \le j \le 4$. For the sake of brevity, we only present the proofs for $B_2[ \FP ]$ and $B_4[ \FP ]$, as $B_1[ \FP ]$ and $B_3[ \FP ]$ can be treated by similar techniques.

	\fbox{$B_2$:} Using a Fourier expansion of the difference
	$
	\hat{ e }_i [ \FP ]  - e_i 
	$ gives
	$$
	\Gamma^{ \zeta } (\hat{ e }_i [ \FP ]  - e_i )
	=
	\sum_{q \ge 1}  \langle  \hat{ e }_i [ \FP ]  - e_i , e_q \rangle \Gamma^{ \zeta } e_q = \sum_{q \ge 1} \lambda_q^\zeta \langle  \hat{ e }_i [ \FP ]  - e_i , e_q \rangle  e_q.
	$$
Separating the terms where $i=q$ and $i \neq q$ yields	
$$
\Gamma^{ \zeta } B_{2} [ \FP ] = B_{2,1}[ \FP ]+ B_{2,2}[ \FP ],
$$  where
\be \label{Eq_UnsmoothRem-UN-BiasTerm-B2-q=i-q_neq_i}
        B_{2,1}[ \FP ] := \sum_{ i =1 }^{ k } 
        \frac{ \lambda_i^{ \zeta } }{ \hat{ \lambda}_i  [ \FP ] }
         \langle  \hat{ e }_i [ \FP ]  - e_i , e_i \rangle 
         e_i \otimes e_i \quad \quad 
    B_{2,2}[ \FP ] :=
        \sum_{ i =1 }^{ k }  \sum_{q \neq i } 
        \frac{ \lambda_q^{ \zeta } }{ \hat{ \lambda}_i  [ \FP ] }
         \langle  \hat{ e }_i [ \FP ]  - e_i , e_q \rangle 
         e_q \otimes e_i.
\ee
    Now, proceeding as for $A_{1} [ \FP ]$ we have 
\begin{align*}
    & \vvvert B_{2,1}[ \FP ] \vvvert_\mathcal{L} =
       \Big\vvvert
        \sum_{ i =1 }^{ k } 
        \frac{ \lambda_i^{ \zeta } }{ \hat{ \lambda}_i  [ \FP ] }
         \langle  \hat{ e }_i [ \FP ]  - e_i , e_i \rangle 
         e_i \otimes e_i
        \Big\vvvert_{ \mathcal{ L }  } \\[1ex]
    \leq & 
        \max_{ 1 \le i \le k }
        \frac{ \lambda_{ i }^{ \zeta } }{  \hat{ 2 \lambda}_{ i } [ \FP ]  }
        \|  \hat{ e }_i [ \FP ]  - e_i \|^2
    \leq
        2 \max_{ 1 \le i \le k }
        \frac{ \lambda_{ i }^{ \zeta } }{  \hat{  \lambda}_{ i } [ \FP ]  }
        \frac{  \vvvert \hat{\Gamma}_N[ \FP ] - \FP \Gamma\vvvert_\mathcal{ L }^2 
	     }{
	        \min( \lambda_{ i - 1 }- \lambda_i, \lambda_i - \lambda_{ i + 1 } )^2
	    }
\end{align*}
    (where we have used the  identity~\eqref{Eq_IdentSqauNorm-of-Normal-Vect} in the first and the bound ~\eqref{Eq_Inequalty-Norm-Diff-Eigenvect} in the second inequality).
    By part $i)$ of Lemma~\ref{Lem_Bound-Ratio-Eigenvalues} we see that
$$
         \max_{ i \le k }
        \frac{ \lambda_{ i }^{ \zeta } 
            }{
            \min( \lambda_{ i - 1 }- \lambda_i, \lambda_i - \lambda_{ i + 1 } )^2
             \hat{ 2 \lambda}_{ i } [ \FP ]  }
    =
        \mathcal{ O }_{ \PP }
            \left( k^{ 2 ( \SM + 1 ) + \SM ( 1 - \zeta ) } 
            \right).
$$
    Here we have replaced $\hat \lambda_i[\FP]$ by $\FP \lambda_i$ in the denominator of the Lemma and then cancelled $\lambda_i^\zeta$. Recalling that $\vvvert \hat{\Gamma}_N[ \FP ] - \FP \Gamma\vvvert_\mathcal{ L }^2 = \mathcal{O}_{\PP}(1/N)$ and using Assumption \ref{ass31}(6) shows that 
$$
   \sup_{\FP \in [1/2,1]}\vvvert B_{2,1}[ \FP ] \vvvert_\mathcal{L} = \mathcal{ O }_{ \PP } \left( \frac{ k^{ 2 ( \SM + 1 ) + \SM (1 - \zeta ) } }{ N } \right)  =  \mathcal{ O }_{ \PP } \left( \frac{ k^{   \SM + 1  + \SM (1 - \zeta ) } }{ \sqrt{N} } \right).
$$
    
    Next we consider $B_{2,2}[ \FP ] $ (defined in \eqref{Eq_UnsmoothRem-UN-BiasTerm-B2-q=i-q_neq_i}).  By arguments similar to those used in the discussion of the term $A_{1,2}$ (see \eqref{bound_A_12}) we have
$$
        \vvvert
        B_{2,2}[ \FP ]
      \vvvert
    \leq    
       \max_{ i \le k, q \neq i }  
       \frac{ \lambda_q^{  \zeta }  
            }{
                | \hat{ \lambda }_i [ \FP ] - \FP \lambda_q | 
                \, 
                 \hat{ \lambda}_i  [ \FP ]	
            }
		\vvvert  \hat{ \Gamma }_N [ \FP ]  - \FP \Gamma \vvvert.
$$
    Again, part $i)$ of Lemma~\ref{Lem_Bound-Ratio-Eigenvalues} can be used to replace the empirical eigenvalue by its population counterpart, which (by Assumption \ref{ass31}(6)) shows that
$$
       \max_{ i \le k, q \neq i }  
       \frac{ \lambda_q^{  \zeta }  
            }{
                | \hat{ \lambda }_i [ \FP ] - \FP \lambda_q | 
                \, 
                 \hat{ \lambda}_i  [ \FP ]	
            }
    =
      \mathcal{ O }_{ \PP }
            \left( k^{  ( \SM + 1 ) + \SM ( 1 - \zeta ) } 
            \right)  
$$
    Thus 
    $$ 
    \sup_{\FP \in [1/2,1]} \vvvert
        B_{2,2}[ \FP ]
      \vvvert =   \mathcal{ O }_{ \PP } \left( \frac{  k^{ ( \SM + 1 ) +  \SM ( 1 -  \zeta ) } }{ \sqrt{  N } } \right).
      $$
    Putting the estimates for both terms  together we see that $\vvvert B_2 [ \FP ] \vvvert_\mathcal{L}$ is uniformly of order
\be \label{Eq_UnsmoothRem-UN-BiasTerm-B2-order}
		\mathcal{ O }_{ \PP } \left( \frac{  k^{ ( \SM + 1 ) +  \SM ( 1 -  \zeta ) } }{ \sqrt{  N } } \right) 
	=
		\mathcal{ O }_{ \PP } \left(   k^{  ( \SM + 1 ) /2 + \SM  \epsilon }  \right),
\ee
	where the last equality holds by Assumption \ref{ass31}(7) and our choice
	$\zeta =  1/2 -  1 / ( 2 \SM ) - \epsilon$.

\fbox{$B_4 :$}  We can upper bound the operator norm of $\Gamma^{ \zeta } B_4 [ \FP ]$ as follows:
$$
        \vvvert \Gamma^{ \zeta } B_4 [ \FP ] \vvvert_\mathcal{L}
    =  
		\Big\vvvert \sum_{ i =1 }^{ k } e_i \otimes e_i 
	    \frac{ \FP  \lambda_i - \hat{ \lambda}_i  [ \FP ] 
			}{
	    \hat{ \lambda}_i  [ \FP ]   \lambda_i^{1-\zeta} \FP } \Big\vvvert_\mathcal{L} 
	    \le \max_{1 \le i \le k} \Big| \frac{ \FP  \lambda_i - \hat{ \lambda}_i  [ \FP ]
			}{
	    \hat{ \lambda}_i  [ \FP ]   \lambda_i^{1-\zeta} \FP }\Big|.
$$
    Note that
$$
    \sup_{\FP \in [1/2,1]}|\FP  \lambda_i - \hat{ \lambda}_i  [ \FP ]| \le \sup_{\FP \in [1/2,1]} \vvvert  \hat{ \Gamma}_N [ \FP ] - \FP \Gamma  \vvvert_\mathcal{L}
	=
		\mathcal{ O }_{ \PP } (1/\sqrt{ N }  ),
$$
which gives a bound for the numerator. For the denominator we can replace $\hat{ \lambda}_i  [ \FP ]$ by the
original $ \FP \lambda_i  $ (part $i)$ of Lemma \ref{Lem_Bound-Ratio-Eigenvalues}), which gives a rate 
$$
\frac{1}{\hat{ \lambda}_i  [ \FP ]   \lambda_i^{1-\zeta} \FP }= \mathcal{O}_{\PP}\bigg(\frac{1}{   \lambda_i^{2-\zeta} } \bigg) = \mathcal{O}_{\PP}\big( k^{\SM(2-\zeta)} \big).
$$
Combining these estimates we see that 
$$
    \sup_{\FP \in [1/2,1]}\vvvert \Gamma^{ \zeta } B_4 [ \FP ] \vvvert_\mathcal{L}
    =  \mathcal{O}_{\PP}(k^{\SM/2-1/2+\SM  \epsilon}) = \mathcal{O}_{\PP}(k^{\SM/2+1/2+\SM  \epsilon})~,
$$
 where we have used the definition  $\zeta=1/2-1/(2\SM)-\epsilon$ as well as Assumption \ref{ass31}(7).     We have now shown that indeed \eqref{Eq_rate_Bj} holds and therefore (see \eqref{Eq_Bj_decomposition})
\be \label{Eq_LemNonSmoothedBiasSecondTerm}
		\sup_{\FP \in [1/2,1]} \vvvert 
		\Gamma^{ \zeta } 
		( \hat{ \Gamma}^{ \dagger}_k[ \FP ] -  \Gamma^{ \dagger}_k/\FP   ) 
		\vvvert_{ \mathcal { L } }
	=
		\mathcal{ O }_{ \PP } 
    	\left( k^{\SM/2+1/2+ \SM  \epsilon}
    	\right).
\ee
	Finally, we derive an upper bound in the second term on the right of \eqref{Eq_UN_decomposition}, noting that
$$
		\Gamma^{  \zeta } \Gamma^{ \dagger}_k /\FP 
	=
		\sum_{ i \le k } \frac{ \lambda_i^{ \zeta } }{ \FP \lambda_i } e_i \otimes e_i,
$$
	which is positive definite and diagonal. Therefore, we obtain for the operator norm 
\be \label{Eq_LemNonSmoothedBiasThirdTerm}
	\sup_{\FP \in [1/2,1]} \vvvert \Gamma^{  \zeta } \Gamma^{ \dagger}_k/\FP \vvvert_\mathcal{L} = 
	\sup_{\FP \in [1/2,1]}  \max_{1 \le i \le k} \frac{ \lambda_i^{ \zeta } }{ \FP \lambda_i } = \mathcal{ O } ( k^{ \SM ( 1 - \zeta ) } ) = \mathcal{ O } ( k^{ (\SM +1)/2+\SM  \epsilon } ).
\ee
	Now combing equations~\eqref{Eq_LemNonSmoothedBiasFirstTerm}, 
	\eqref{Eq_LemNonSmoothedBiasSecondTerm} and \eqref{Eq_LemNonSmoothedBiasThirdTerm} 
	we find, that for any $\epsilon>0$ 
\begin{align*}
		\vvvert U_N [ \FP ]   \hat{ \Gamma}^{ \dagger}_k [ \FP ] \vvvert
	= &
		\mathcal{ O }_{ \PP }\left( \frac{k^\epsilon}{\sqrt{N}}  \right)
		\left(  
			\mathcal{ O }_{ \PP } \left( \  k^{ ( \SM + 1 )/2 + \SM  \epsilon}  \right) 
			+
			\mathcal{ O }_{ \PP } \left( \  k^{ ( \SM + 1 )/2 + \SM  \epsilon}  \right) 
		\right)\\
	=&
		 \mathcal{ O }_{ \PP } ( k^{ ( \SM + 1 ) /2 +   (1+ \SM ) \epsilon   } / \sqrt{ N } )  .
\end{align*}
Finally, replacing $\epsilon$ by $\epsilon/(1+ \SM ) $ proves the assertion $ii)$ of  Lemma \eqref{Eq_def_T_prime}.
%

\end{itemize}

\end{proof}

%
%
%
%
%
%
%
%
%

\subsection{Linearization of the test statistic}

    In the proof of Theorem \ref{theorem_1} in Section~\ref{Subsection_App_A_3} we have used the stochastic linearization
    $T[\FP] = T'[\FP] + o_\PP(1)$, 
    where the objects $T[\FP] $ and $T'[\FP] $ are defined in \eqref{Eq_MainTheo-Def-dummyVariable-T} and \eqref{Eq_def_T_prime} respectively ($T'$ is the linearization of $T$). That this replacement is valid is a direct consequence of the subsequent Lemma~\ref{Lem_Projection-Statistics-Smoothed-Right-and-Left}. Before we state our Lemma, we define the linearization function $L$, which acts on the space of sequential Hilbert--Schmidt operators $\ell^\infty(\mathcal{S}(H_1, H_1))$ (see Definition~\ref{definition_sequential_space}). For convenience we also define the map $\tilde L$, which is used to state the long-run variance in Theorem~\ref{theorem_3}.


\begin{defi}\label{Def_LinerazationOperat}
	Let $L_i $ be the linear functional acting on the space $\ell^\infty(\mathcal{S}(H_1, H_1))$ defined pointwise in $F[\FP ]$ as
$$
		L_i (F[\FP ])
	:=
		\sum_{ q \neq i }  
		\frac{ \lambda_q^{  \beta } \lambda_i^{  \beta } 
		}{ 
		 \lambda_i  -  \lambda_q 
		}
		\langle  F[\FP ] ,  e_i   \otimes e_q \rangle ( e_q \otimes e_i ).
$$
    Therewith we define
	$
	    L (F[\FP ]) 
	:= 
	    \sum_{ i \ge 1 } L_i (F[\FP ])+ L_i (F[\FP ])^{ \ast }$. 
	 Moreover we define the map $\tilde L$ as  $\tilde L(F[\FP ]) :=  (\Gamma^{-1/2} \otimes \Gamma^{-1/2}) L (F[\FP ])$.
\end{defi}

\begin{lem}\label{Lem_Projection-Statistics-Smoothed-Right-and-Left}
	Under the assumptions of Theorem~\ref{theorem_1} it holds that
\begin{itemize}
    \item[i)] $\quad
  		\sup_{\FP \in [1/2,1]}\vvvert \FP \Gamma^{ \beta } (\hat{ \Pi }_k [ \FP ]  - \Pi_{ k }  )  \Gamma^{ \beta }
	-
		L (\hat{\Gamma}_N [ \FP ] - \FP \Gamma) \vvvert_\mathcal{L}  = o_{ \PP } ( 1 / \sqrt{  N } )
    $
    \item[ii)] 
    $\quad
    		\sup_{\FP \in [1/2,1]}\vvvert U_N[\FP] [ \FP\hat{ \Gamma}^{ \dagger }_k [ \FP ] 
		\Gamma^{ \beta } - \Gamma^{ \beta-1 }] \vvvert  = o_\PP(1/\sqrt{N}).
    $
\end{itemize}

\end{lem}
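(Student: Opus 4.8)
The plan is to prove both parts by expanding the relevant operator in the orthonormal product basis $\{e_q \otimes e_l\}_{q,l \in \N}$, isolating the "diagonal" contribution that survives in the limit, and showing that the remaining terms are $o_\PP(1/\sqrt{N})$ uniformly in $\FP$. The key exact identities we rely on are those collected in Appendix~B for sequential eigensystems: the second-order identity $\langle \hat e_i[\FP] - e_i, e_i\rangle = -\tfrac12\|\hat e_i[\FP] - e_i\|^2$, the first-order expansion $\langle \hat e_i[\FP] - e_i, e_q\rangle = \langle \hat\Gamma_N[\FP] - \FP\Gamma, \hat e_i[\FP]\otimes e_q\rangle/(\hat\lambda_i[\FP] - \FP\lambda_q)$ for $q \neq i$, together with the eigenvalue-ratio control of Lemma~\ref{Lem_Bound-Ratio-Eigenvalues} and the uniform bound $\sup_{\FP}\vvvert\sqrt N(\hat\Gamma_N[\FP] - \FP\Gamma)\vvvert = \mathcal{O}_\PP(1)$ from Theorem~\ref{Theo_StochBoundFuncCovOp}.

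\textbf{Part (i).} I would start from the decomposition $\hat\Pi_k[\FP] - \Pi_k = \sum_{i=1}^k\big[(\hat e_i[\FP] - e_i)\otimes e_i + e_i\otimes(\hat e_i[\FP] - e_i) + (\hat e_i[\FP] - e_i)\otimes(\hat e_i[\FP] - e_i)\big]$, multiply on left and right by $\Gamma^\beta$, and expand each $\hat e_i[\FP] - e_i$ in the basis $\{e_q\}$. The term quadratic in $\hat e_i[\FP] - e_i$ is handled exactly as the term $A_3$ in Lemma~\ref{Lem_Bounds-NonSmoothed-Var-and-Bias-of-S_N}: each $\langle\hat e_i[\FP]-e_i,e_q\rangle$ contributes a factor $\mathcal{O}_\PP(1/\sqrt N)$ together with an eigenvalue ratio that is $\mathcal{O}_\PP(1)$ (or, in the worst sub-case, a factor growing polynomially in $k$ that is dominated by a second $1/\sqrt N$ factor), so after using Assumption~\ref{ass31}(7) this piece is $o_\PP(1/\sqrt N)$ in operator norm uniformly in $\FP$. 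For the two linear terms, the $q=i$ contributions are again quadratic in $\|\hat e_i[\FP]-e_i\|$ by the second-order identity, hence negligible. The surviving piece is $\sum_{i=1}^k\sum_{q\neq i}\lambda_i^\beta\lambda_q^\beta\langle\hat e_i[\FP]-e_i,e_q\rangle\,(e_q\otimes e_i + e_i\otimes e_q)$; substituting the first-order expansion for $\langle\hat e_i[\FP]-e_i,e_q\rangle$, replacing $\hat e_i[\FP]$ by $e_i$ and $\hat\lambda_i[\FP]$ by $\FP\lambda_i$ inside the denominator (the error of these replacements being $o_\PP(1/\sqrt N)$ by Lemma~\ref{Lem_Bound-Ratio-Eigenvalues} and the eigengap assumption), and multiplying by $\FP$, gives precisely $\sum_{i}\sum_{q\neq i}\frac{\lambda_i^\beta\lambda_q^\beta}{\lambda_i-\lambda_q}\langle\hat\Gamma_N[\FP]-\FP\Gamma,e_i\otimes e_q\rangle(e_q\otimes e_i + e_i\otimes e_q)$, which is $L(\hat\Gamma_N[\FP]-\FP\Gamma)$ by Definition~\ref{Def_LinerazationOperat}. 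One then checks that truncating the sum over $i$ at $k$ rather than letting it run to infinity is harmless, because for $i>k$ the coefficients $\langle\hat\Gamma_N[\FP]-\FP\Gamma,e_i\otimes e_q\rangle$ are not present in $\hat\Pi_k[\FP]-\Pi_k$ and the tail of $L$ acting on $\hat\Gamma_N[\FP]-\FP\Gamma$ is controlled by the Hilbert--Schmidt norm of $\hat\Gamma_N[\FP]-\FP\Gamma$ times a summable eigenvalue factor.

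\textbf{Part (ii).} Here I would write $\FP\hat\Gamma_k^\dagger[\FP]\Gamma^\beta - \Gamma^{\beta-1} = \big(\FP\hat\Gamma_k^\dagger[\FP] - \Gamma_k^\dagger\big)\Gamma^\beta + \big(\Gamma_k^\dagger - \Gamma^{-1}\big)\Gamma^\beta$, using the $B_j[\FP]$ decomposition of $\hat\Gamma_k^\dagger[\FP] - \Gamma_k^\dagger/\FP$ from the proof of Lemma~\ref{Lem_Bounds-NonSmoothed-Var-and-Bias-of-S_N}(ii) for the first summand and the simple diagonal estimate $\vvvert(\Gamma_k^\dagger - \Gamma^{-1})\Gamma^\beta\vvvert_\mathcal{L} = \lambda_{k+1}^{\beta-1} = \mathcal{O}(k^{-\SM(\beta-1)})$ for the second; since $\beta>1+1/\SM$, the latter is $o(1)$, and combined with $\sup_\FP\vvvert U_N[\FP]\Gamma^{-\zeta}\vvvert = \mathcal{O}_\PP(k^\epsilon/\sqrt N)$ and $\vvvert\Gamma^\zeta(\Gamma_k^\dagger-\Gamma^{-1})\Gamma^{\beta-\zeta}\vvvert_\mathcal{L}$ negligible it yields the required $o_\PP(1/\sqrt N)$. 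For the first summand one pulls out $\vvvert U_N[\FP]\Gamma^{-\zeta}\vvvert$ via the submultiplicative inequality and is left with $\vvvert\Gamma^\zeta(\FP\hat\Gamma_k^\dagger[\FP] - \Gamma_k^\dagger)\Gamma^\beta\vvvert_\mathcal{L}$; multiplying the $B_j$-bounds by the extra smoothing factor $\Gamma^\beta$ improves each rate by $\lambda_k^\beta = \mathcal{O}(k^{-\SM\beta})$, so what was $\mathcal{O}_\PP(k^{(\SM+1)/2+\SM\epsilon})$ in that lemma becomes $\mathcal{O}_\PP(k^{(\SM+1)/2 - \SM\beta + \SM\epsilon})$, and together with the $\mathcal{O}_\PP(k^\epsilon/\sqrt N)$ from $U_N[\FP]\Gamma^{-\zeta}$ this is $o_\PP(1/\sqrt N)$ once $\epsilon$ is small, again using $\beta>1+1/\SM$.

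\textbf{Main obstacle.} The routine parts are the basis expansions and the bookkeeping of $k$-powers; the delicate point is the \emph{uniformity} in $\FP$ throughout, in particular justifying the replacements $\hat e_i[\FP]\rightsquigarrow e_i$ and $\hat\lambda_i[\FP]\rightsquigarrow\FP\lambda_i$ inside denominators uniformly over $\FP\in[1/2,1]$ and over $i\le k$ simultaneously, and ensuring that the sharp (rather than merely polynomial) slack in Assumption~\ref{ass31}(7) is exactly what converts the surviving error terms into $o_\PP(1/\sqrt N)$ rather than merely $\mathcal{O}_\PP(1/\sqrt N)$. This is where Lemma~\ref{Lem_Bound-Ratio-Eigenvalues} and the functional bound of Theorem~\ref{Theo_StochBoundFuncCovOp} must be invoked carefully, and where the argument is genuinely more involved than the pointwise-in-$\FP$ version.
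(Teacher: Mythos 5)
Your proposal is correct and follows essentially the same route as the paper's proof: for part (i) the same linear-plus-quadratic decomposition of $\hat{\Pi}_k[\FP]-\Pi_k$, the identities $\langle \hat e_i[\FP]-e_i,e_i\rangle=-\tfrac12\|\hat e_i[\FP]-e_i\|^2$ and the first-order eigenfunction expansion, the replacements $\hat e_i[\FP]\rightsquigarrow e_i$, $\hat\lambda_i[\FP]\rightsquigarrow\FP\lambda_i$ and the tail extension of the sum over $i$ (the paper's Lemma~\ref{Lem_FirstTechLem-for-Step1-AsympFormVar}); and for part (ii) the same factorization through $\Gamma^{-\zeta}/\Gamma^{\zeta}$, reuse of the $B_j$-decomposition with the extra smoothing by $\Gamma^\beta$, and the diagonal bound for $\Gamma^{\beta-1}(\Pi_k-\operatorname{Id})$. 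The only slight looseness is phrasing the second factor in (ii) as ``$o(1)$'' when what is used (and what you in fact have) is polynomial decay in $k$, which is needed to beat the $k^\epsilon$ factor — exactly as in the paper.
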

\begin{proof}
    We first prove $i)$: 
	Plugging in the definition of the projections gives the following expansion:
$$
		\Gamma^{ \beta } (\hat{ \Pi }_k [ \FP ]  - \Pi_{ k }  )  \Gamma^{ \beta }
	=
		\sum_{ i = 1 }^{ k } 
		\Gamma^{ \beta } 
			\left\{ 
			(  \hat{ e }_i [ \FP ]   \otimes \hat{ e }_i [ \FP ] )
			-
			( e_i \otimes e_i ) 
			\right\}
		\Gamma^{ \beta } = D_1[\FP] + D_2[\FP] + D_3[\FP],
$$
	where
\begin{align*}
    D_1[\FP] :=&	\sum_{ i = 1 }^{ k } 
	\Gamma^{ \beta } 
		( (  \hat{ e }_i [ \FP ]  - e_i )  \otimes e_i  )\Gamma^{ \beta }\\
		D_2[\FP] :=& \sum_{ i = 1 }^{ k } 
		\Gamma^{ \beta }(  e_i    \otimes ( \hat{ e }_i [ \FP ]  - e_i )   )\Gamma^{ \beta }\\
		D_3[\FP] :=&	\sum_{ i = 1 }^{ k } 
		\Gamma^{ \beta }( ( \hat{ e }_i [ \FP ]  - e_i )  \otimes  ( \hat{ e }_i [ \FP ]  - e_i )   )
	\Gamma^{ \beta }.
\end{align*}
 The proof now consists of two steps: In the first step we show that
 \begin{equation} \label{D_1_Linearization}
     \FP D_1[\FP]
     =
     \sum_{ i \ge 1 } L_i (\hat{\Gamma}_N [ \FP ] - \FP \Gamma). 
 \end{equation}
 As $D_2[\FP] =D_1[\FP]^{ \ast }$, this implies $\FP(D_1[\FP]+D_2[\FP]) = L (\hat{\Gamma}_N [ \FP ] - \FP \Gamma)$. In the second step we establish that $D_3[\FP]$ is uniformly of order $o_\PP(1)$.

\fbox{Step 1:}

	Using the diagonal representation $\Gamma^\beta=\sum_{q \ge 1} \lambda_q^\beta e_q \otimes e_q $ and the identity $\Gamma^\beta e_i = \lambda_i^\beta e_i$, we can rewrite $D_1[\FP]$ as follows: 
$$
	D_1[\FP] 
	= 
	 \sum_{ i = 1 }^{ k }\sum_{ q \neq i } \lambda_q^{ \beta } \lambda_i^{ \beta }
	\langle  \hat{ e }_i [ \FP ]  - e_i , e_q \rangle    ( e_q \otimes e_i ) 
	+ 
	\sum_{ i = 1 }^{ k } \lambda_i^{ 2\beta }
	\langle  \hat{ e }_i [ \FP ]  - e_i , e_i \rangle    ( e_i \otimes e_i ) =:  D_{1,1}[\FP] + D_{1,2}[\FP].
$$
    Here $D_{1,1}[\FP], D_{1,2}[\FP]$ are defined in the obvious way.
 	We first show that $D_{1,2}[\FP]$ is negligible. For this purpose we use \eqref{Eq_IdentSqauNorm-of-Normal-Vect}, to see that
$$  \vvvert D_{1,2}[\FP]     \vvvert_\mathcal{L}
    =
	\Big \vvvert \sum_{ i =1 }^{ k } \frac{ - \lambda^{ 2 \beta}_i }{ 2 } \left\|  \hat{ e }_i [ \FP ]  - e_i  \right\|^2 ( e_i \otimes e_i ) \Big\vvvert_\mathcal{L}
	\le 
	\max_{1 \le i \le k}
	  \frac{ \lambda^{ 2 \beta}_i }{ 2 }  \left\|  \hat{ e }_i [ \FP ]  - e_i  \right\|^2.
$$
The maximum is smaller than a multiple of the right side of  \eqref{Eq_deal_with_ev}, which is uniformly of order $o_\PP(1/\sqrt{N})$. 

Next we turn our attention to $D_{1,1}[\FP]$. Applying identity~\eqref{Eq_IdentInprodDiff2Eigvect-with-other-Eigvect}, $D_{1,1}[\FP]$ can be rewritten as
$$  
	 \sum_{ i = 1 }^{ k }\sum_{ q \neq i } \lambda_q^{ \beta } \lambda_i^{ \beta }
			 \frac{ 
		 \langle  \hat{ \Gamma }_N [ \FP ]  - \FP \Gamma , \hat{ e }_i [ \FP ]  \otimes e_q \rangle
		}{ 
		( \hat{ \lambda }_i [ \FP ] - \FP \lambda_q )
		}    ( e_q \otimes e_i ).
$$
    We can now show two things: Firstly that in the above representation we can replace the empirical eigenfunction $\hat e_i[\FP]$ and eigenvalue $\hat \lambda_i[\FP]$ by their respective population  counterparts $e_i$ and $\FP \lambda_i$.
    Secondly, we can let the outer sum over $i$ run from $1$ to $\infty$, all of this while incurring only an error of size $o_\PP(1/\sqrt{N})$. More precisely we get with 
$$
    \tilde D_{1,1}[\FP]
    =
     \sum_{ i = 1 }^{ k }\sum_{ q \neq i } \lambda_q^{ \beta } \lambda_i^{ \beta }
			 \frac{ 
		 \langle  \hat{ \Gamma }_N [ \FP ]  - \FP \Gamma , e_i  \otimes e_q \rangle
		}{ 
		  \FP (\lambda_i  -  \lambda_q ) 
		}    ( e_q \otimes e_i )
$$
    that 
\begin{equation} \label{Eq_tilde_D1}
    \sup_{\FP \in [1/2,1]}
    \vvvert D_{1,1}[\FP] - \tilde D_{1,1}[\FP] \vvvert_\mathcal{L} =o_\PP(1/\sqrt{N}).
\end{equation}
	The proof of this fact requires the estimation of some further remainder terms, which we defer to the below Lemma~\ref{Lem_FirstTechLem-for-Step1-AsympFormVar}. We have now established  that $\tilde D_{1,1} [\FP] = D_1[\FP]+o_\PP(1/\sqrt{N})$ and since $\FP \tilde D_{1,1} [\FP] = \sum_{ i \ge 1 } L_i (\hat{\Gamma}_N [ \FP ] - \FP \Gamma)$ we have shown \eqref{D_1_Linearization}, which concludes the first step of this proof. \\

\fbox{Step 2:}

	Next, we show that $\vvvert D_3[\FP]\vvvert$ is uniformly of order $o_\PP(1/\sqrt{N})$.
	Using the Fourier expansion  $\hat{ e }_i [ \FP ]  - e_i = \sum_{q \ge 1} \langle \hat{ e }_i [ \FP ]  - e_i, e_q \rangle e_q$ two times, we can rewrite $D_3[\FP]$ as follows:
\begin{align*}
    D_3[\FP] =&	\sum_{ i = 1 }^{ k } \sum_{ r,q  \ge 1} \langle \hat{ e }_i [ \FP ]  - e_i, e_q \rangle \langle \hat{ e }_i [ \FP ]  - e_i, e_r \rangle 
		\Gamma^{ \beta } (e_q \otimes e_r) \Gamma^{ \beta } \\
		=& 	\sum_{ i = 1 }^{ k } \sum_{ r,q  \ge 1} \lambda_q^\beta \lambda_r^\beta \langle \hat{ e }_i [ \FP ]  - e_i, e_q \rangle \langle \hat{ e }_i [ \FP ]  - e_i, e_r \rangle 
		 (e_q \otimes e_r) 
\end{align*}
	In the second equality we have used the fact that $\Gamma^\beta e_q = \lambda_q^\beta e_q$. Consequently 
\begin{align*}
&
    \vvvert D_3[\FP] \vvvert
    =
    \bigg\{
	\sum_{ r, q \ge 1 } 
	\Big( \sum_{ i = 1 }^{ k } 
		\lambda_q^{ \beta } \lambda_r^{ \beta }
		 \langle e_q , \hat{ e }_i [ \FP ]  - e_i \rangle
		 \langle  \hat{ e }_i [ \FP ]  - e_i , e_r \rangle
	\Big)^2
	\bigg\}^{1/2}.
\end{align*}
	By applying the Cauchy--Schwarz inequality to the squared sum, 
	we get
$$
    \vvvert D_3[\FP] \vvvert
    \le 
	\sum_{  r \ge 1 } 
	\sum_{ i = 1 }^{ k } 
		\lambda_r^{ 2 \beta } \langle  \hat{ e }_i [ \FP ]  - e_i , e_r \rangle^2.
$$
The proof that
$$
\sup_{\FP \in [1/2,1]}\vvvert D_3[\FP] \vvvert =o_\PP(1/\sqrt{N})
$$
is now conducted by similar techniques as for the term $A_{1}$ in the proof of Lemma~\ref{Lem_Bounds-NonSmoothed-Var-and-Bias-of-S_N} and therefore omitted.\\
%

Finally we turn to the proof of part $ii)$ of this Lemma. Since this proof is technically very similar to part $ii)$ of Lemma~\ref{Lem_Bounds-NonSmoothed-Var-and-Bias-of-S_N} we only sketch the idea: We begin by the simple upper bound
$$
        \vvvert  U_N[\FP] [\FP \hat{ \Gamma}^{ \dagger }_k [ \FP ] 
		\Gamma^{ \beta } - \Gamma^{ \beta-1 }] \vvvert  
		\le 
		\vvvert U_N[\FP] \Gamma^{-\zeta}\vvvert \vvvert \Gamma^\zeta [ \FP\hat{ \Gamma}^{ \dagger }_k [ \FP ] 
		\Gamma^{ \beta } - \Gamma^{ \beta-1 }] \vvvert_\mathcal{L}.
$$
Here  $\zeta=1/2-1/(2\SM)-\epsilon$ and $\epsilon>0$ is a positive number, which can be chosen arbitrarily small. By \eqref{Eq_LemNonSmoothedBiasFirstTerm} we know that uniformly
$\vvvert U_N [ \FP ] \Gamma^{ - \zeta } \vvvert
    =
        \mathcal{O}_{\PP}( k^\epsilon/\sqrt{N}).
$
Thus it suffices to show that the factor
$
\vvvert \Gamma^\zeta [ \FP \hat{ \Gamma}^{ \dagger }_k [ \FP ] 
		\Gamma^{ \beta } - \Gamma^{ \beta-1 }/\FP] \vvvert_\mathcal{L}
$
decays at some arbitrarily small, polynomial speed in $N$, to get the assertion. We upper bound it by the sum
$$
    \vvvert \Gamma^\zeta [ \FP \hat{ \Gamma}^{ \dagger }_k [ \FP ] 
	- \Gamma^{ \dagger }_k] \Gamma^{ \beta }] \vvvert_\mathcal{L} 
	+
	\vvvert  \Gamma^{\beta-1+\zeta} \Pi_{k}-\Gamma^{\beta-1+\zeta} \vvvert_\mathcal{L}  =: F_1[\FP] + F_2,
$$
where $F_1[\FP], F_2$ are defined in the obvious way, and analyse the terms separately. For $F_1[\FP]$, we use similar techniques as in the proof of Lemma \ref{Lem_Bounds-NonSmoothed-Var-and-Bias-of-S_N}  (after equation \eqref{Eq_Bj_decomposition}). Notice that we can indeed show convergence to $0$ as  additional smoothing is applied (by $\Gamma^\beta$ from the right). The proof for $F_2$ is rather simple: $\Gamma^{\beta-1+\zeta}-\Gamma^{\beta-1+\zeta} \Pi_{k}$ is symmetric, positive definite and can be expressed (by the spectral theorem in Section~\ref{Subsection_Operators}) as $\sum_{q >k} \lambda_q^{\beta-1 + \zeta} e_q \otimes e_q$. Thus
$$
\vvvert  \Gamma^{\beta-1+\zeta} \Pi_{k}-\Gamma^{\beta-1+\zeta} \vvvert_\mathcal{L} = \lambda_{k}^{\beta-1+\zeta}= \mathcal{O}(k^{-\SM(\beta-1+\zeta)}).
$$
This concludes the proof.
\end{proof}

In the proof of Lemma~\ref{Lem_Projection-Statistics-Smoothed-Right-and-Left}, we have used the identity~\eqref{Eq_tilde_D1} for the final step in the linearization. In the Lemma below we give three upper bounds, which combined directly imply \eqref{Eq_tilde_D1}.

\begin{lem}\label{Lem_FirstTechLem-for-Step1-AsympFormVar}
	Under the assumptions of Theorem~\ref{theorem_1} the following identities hold:
\begin{itemize}
	\item[ i) ]  $$
	\sup_{\FP \in [1/2, 1]} \Big\vvvert \sum_{ i > k } L_i(\hat \Gamma_N [\FP]- \FP \Gamma) \Big\vvvert  =  o_{ \PP } ( 1 / \sqrt{ N } ).
	$$

	\item[ ii) ]
$$
	\sup_{\FP \in [1/2, 1]} \Big\vvvert \sum_{ i =1 }^{ k } \sum_{ q \neq i } \lambda_q^{ \beta } \lambda_i^{ \beta }
	\frac{ 
		 \langle  \hat{ \Gamma }_N [ \FP ]  - \FP \Gamma , ( \hat{ e }_i [ \FP ] - e_i ) \otimes e_q \rangle
		}{ 
		( \hat{ \lambda }_i [ \FP ] - \FP \lambda_q )
		}
	( e_q \otimes e_i )\Big\vvvert
=
	o_{ \PP }( 1 / \sqrt{ N } ).
\quad
$$

	\item[ iii) ]
$$
	\sup_{\FP \in [1/2, 1]} \Big\vvvert \sum_{ i =1 }^{ k } \sum_{ q \neq i } \lambda_q^{ \beta } \lambda_i^{ \beta }
	\frac{ 
		 \langle  \hat{ \Gamma }_N [ \FP ]  - \FP \Gamma , e_i  \otimes e_q \rangle
		 ( \FP \lambda_i -  \hat{ \lambda }_i [ \FP ] )
		}{ 
		( \hat{ \lambda }_i [ \FP ] - \FP \lambda_q ) ( \FP ( \lambda_i -  \lambda_q ) )
		}
	( e_q \otimes e_i )\Big\vvvert
=
	o_{ \PP }( 1 / \sqrt{ N } ).
$$
	
\end{itemize}

\end{lem}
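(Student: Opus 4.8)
The plan is to first observe that parts~i), ii) and iii) are precisely the three error terms produced when one replaces $D_{1,1}[\FP]$ by its linearization $\tilde D_{1,1}[\FP]$ in the proof of Lemma~\ref{Lem_Projection-Statistics-Smoothed-Right-and-Left}. Writing $\hat e_i[\FP] = e_i + (\hat e_i[\FP] - e_i)$ in the numerator of $D_{1,1}[\FP]$ splits off exactly the term in~ii); writing
\[
\frac{1}{\hat\lambda_i[\FP] - \FP\lambda_q} = \frac{1}{\FP(\lambda_i - \lambda_q)} + \frac{\FP\lambda_i - \hat\lambda_i[\FP]}{(\hat\lambda_i[\FP] - \FP\lambda_q)\,\FP(\lambda_i - \lambda_q)}
\]
splits off exactly the term in~iii); and enlarging the outer summation from $i\le k$ to $i\ge 1$ introduces exactly $\sum_{i>k}L_i(\hat\Gamma_N[\FP]-\FP\Gamma)$, the term in~i). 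Each of these three pieces must be shown to be $o_\PP(1/\sqrt N)$ uniformly in $\FP\in[1/2,1]$, which is the content of the Lemma and immediately yields \eqref{Eq_tilde_D1}.

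For \textbf{part~i)} I would exploit the linearity of $L_i$: since $\hat\Gamma_N[\FP]-\FP\Gamma = \frac1N\sum_{n=1}^{\lfloor\FP N\rfloor}(X_n\otimes X_n - \Gamma)$, the quantity $\sum_{i>k}L_i(\hat\Gamma_N[\FP]-\FP\Gamma)$ is the sequential partial-sum process $\frac1N\sum_{n\le\FP N}W_n$ with $W_n := \sum_{i>k}L_i(X_n\otimes X_n - \Gamma)\in\mathcal{S}(H_1,H_1)$. Because the operators $\{e_q\otimes e_i\}$ form an ONB and the ranges of the $L_i$ (fixed second index $i$) are pairwise orthogonal, one has $\|W_n\|^2 = \sum_{i>k}\sum_{q\neq i}\frac{\lambda_q^{2\beta}\lambda_i^{2\beta}}{(\lambda_i-\lambda_q)^2}\langle X_n\otimes X_n-\Gamma,e_i\otimes e_q\rangle^2$, and $\E\langle X_0\otimes X_0-\Gamma,e_i\otimes e_q\rangle^2 = \E[\langle X_0,e_i\rangle^2\langle X_0,e_q\rangle^2]\le C\lambda_i\lambda_q$ by Assumption~\ref{ass31}(4). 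Hence $\E\|W_0\|^2 \le C\varepsilon_k$ with $\varepsilon_k := \sum_{i>k}\sum_{q\neq i}\frac{\lambda_q^{2\beta+1}\lambda_i^{2\beta+1}}{(\lambda_i-\lambda_q)^2}$, and the covariance inequality for $\phi$-mixing together with $\sum_h\sqrt{\phi(h)}<\infty$ (Assumption~\ref{ass31}(3), as in the proof of Lemma~\ref{Lem_Bounds-NonSmoothed-Var-and-Bias-of-S_N}) gives the block bound $\E\|\frac1{\sqrt N}\sum_{n=L}^H W_n\|^2 \le C\frac{H-L}{N}\varepsilon_k$ for all $1\le L\le H\le N$. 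Inserting this into the Hilbert-space version of Theorem~3.1 in \cite{moricz1982} yields $\E\sup_\FP\|\sqrt N\sum_{i>k}L_i(\hat\Gamma_N[\FP]-\FP\Gamma)\|^2 \le C\varepsilon_k$, so it only remains to show $\varepsilon_k\to 0$. Using Assumption~\ref{ass31}(6) in the form $|\lambda_i-\lambda_q|\ge C^{-1}|i-q|\,(\max(i,q))^{-\SM-1}$, the summation over $q$ near the diagonal behaves like $\sum_m m^{-2}$ and leaves $\sum_{i>k}i^{-(4\SM\beta-2)}$, the far-from-diagonal contribution being even smaller; this tail vanishes because $\SM\beta > \SM+1 > 1 > 3/4$, a consequence of $\beta>1+1/\SM$ (see Remark~\ref{rem1}(c)).

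For \textbf{parts~ii) and iii)} I would argue exactly as for the second-order remainders $A_{1,2}$, $A_{3,2}$, $A_{3,3}$, $A_{3,4}$ in the proof of Lemma~\ref{Lem_Bounds-NonSmoothed-Var-and-Bias-of-S_N}: expand everything in the product ONB $\{e_q\otimes e_i\}$, insert the identities \eqref{Eq_IdentSqauNorm-of-Normal-Vect} and \eqref{Eq_IdentInprodDiff2Eigvect-with-other-Eigvect} for $\langle\hat e_i[\FP]-e_i,e_i\rangle$ and $\langle\hat e_i[\FP]-e_i,e_q\rangle$, and replace $\hat\lambda_i[\FP]$ by $\FP\lambda_i$ via Lemma~\ref{Lem_Bound-Ratio-Eigenvalues} at the cost of a fixed power of $k$. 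In~ii) one additionally uses $\|\hat e_i[\FP]-e_i\| \le 2\sqrt2\,\vvvert\hat\Gamma_N[\FP]-\FP\Gamma\vvvert_\mathcal{L}/(\FP\min(\lambda_{i-1}-\lambda_i,\lambda_i-\lambda_{i+1}))$ from Appendix~\ref{misc}; in~iii) the bound $|\FP\lambda_i-\hat\lambda_i[\FP]|\le\vvvert\hat\Gamma_N[\FP]-\FP\Gamma\vvvert_\mathcal{L}$; in both cases $\sum_{q\neq i}\langle\hat\Gamma_N[\FP]-\FP\Gamma,e_i\otimes e_q\rangle^2 \le \vvvert\hat\Gamma_N[\FP]-\FP\Gamma\vvvert^2$. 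The decisive point is that each of these remainders carries \emph{two} factors of order $\vvvert\hat\Gamma_N[\FP]-\FP\Gamma\vvvert = \mathcal{O}_\PP(1/\sqrt N)$ (uniformly in $\FP$ by Theorem~\ref{Theo_StochBoundFuncCovOp}), so it is of order $\mathcal{O}_\PP(k^{c}/N)$ for a fixed $c=c(\SM,\beta)$, hence $o_\PP(1/\sqrt N)$ by the rate $k^{\SM+1+\delta}/\sqrt N\to 0$ of Assumption~\ref{ass31}(7); the $\Gamma^\beta$-weights $\lambda_i^\beta,\lambda_q^\beta$ keep the sums over $q$ convergent, and uniformity in $\FP$ is automatic because every stochastic ingredient is controlled uniformly.

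The main obstacle is \textbf{part~i)}: parts~ii) and iii) are essentially repetitions of estimates already performed in Lemma~\ref{Lem_Bounds-NonSmoothed-Var-and-Bias-of-S_N}, whereas part~i) is the only one whose smallness comes not from an extra $1/\sqrt N$ but from letting $k\to\infty$. Making this precise requires both the two-sided eigengap lower bound of Assumption~\ref{ass31}(6) (to control $\sum_{q}(\lambda_i-\lambda_q)^{-2}$ near the diagonal) and the source-condition exponent $\beta>1+1/\SM$ (to turn the resulting series into a genuine vanishing tail), and then dovetailing this deterministic computation with the Hilbert-space maximal inequality to obtain the $o_\PP(1/\sqrt N)$ bound \emph{uniformly} over $\FP$ is the most delicate bookkeeping.
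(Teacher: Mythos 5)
Your proposal is correct, and for parts ii) and iii) it is essentially the paper's own argument: the paper likewise bounds the squared Hilbert--Schmidt norms by pulling out the eigenvalue ratios via Lemma~\ref{Lem_Bound-Ratio-Eigenvalues}, using \eqref{Eq_Inequalty-Norm-Diff-Eigenvect} for $\|\hat e_i[\FP]-e_i\|$ in ii) and $|\FP\lambda_i-\hat\lambda_i[\FP]|\le\vvvert\hat\Gamma_N[\FP]-\FP\Gamma\vvvert_\mathcal{L}$ in iii), so that each term carries two factors of $\vvvert\hat\Gamma_N[\FP]-\FP\Gamma\vvvert$ and ends up of order $\mathcal{O}_\PP(k/N^2)=o_\PP(N^{-3/2})$ for the squared norm. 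One caveat: your summary ``$\mathcal{O}_\PP(k^c/N)$ for a fixed $c$, hence $o_\PP(1/\sqrt N)$ by Assumption~\ref{ass31}(7)'' is only valid if you track that $c$ stays small enough that $k^{c}/\sqrt N\to0$; in the actual computation the gap powers are absorbed by the ratio $\lambda_i^{\beta}/\min(\lambda_{i-1}-\lambda_i,\lambda_i-\lambda_{i+1})=\mathcal{O}(1)$ (using $\beta>1+1/\SM$), leaving only a factor $k$ on the squared norm, so this does work out, but it is not automatic from Assumption~\ref{ass31}(7) alone. Where you genuinely diverge from the paper is part i). The paper's argument is a one-liner: by Parseval,
$
\vvvert\sum_{i>k}L_i(F)\vvvert\le C_N\vvvert F\vvvert
$
with the deterministic constant $C_N=\max_{i>k,\,q\neq i}\lambda_i^{\beta}\lambda_q^{\beta}/|\lambda_i-\lambda_q|$, which is $o(1)$ by Lemma~\ref{Lem_Bound-Ratio-Eigenvalues}\,iii) since $\lambda_i^\beta\to0$ for $i>k\to\infty$; multiplying by the uniform bound $\sup_\FP\vvvert\hat\Gamma_N[\FP]-\FP\Gamma\vvvert=\mathcal{O}_\PP(1/\sqrt N)$ of Theorem~\ref{Theo_StochBoundFuncCovOp} finishes the proof, with no new probabilistic input. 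You instead treat $\sum_{i>k}L_i(\hat\Gamma_N[\FP]-\FP\Gamma)$ as a sequential partial-sum process of $W_n=\sum_{i>k}L_i(X_n\otimes X_n-\Gamma)$, compute $\E\|W_0\|^2\le C\varepsilon_k$ via Assumption~\ref{ass31}(4) and Cauchy--Schwarz, and invoke the mixing covariance inequality plus the Hilbert-space Móricz maximal inequality (exactly as the paper does for $U_N[\FP]\Gamma^{-\zeta}$ in Lemma~\ref{Lem_Bounds-NonSmoothed-Var-and-Bias-of-S_N}\,ii)). This is correct — the orthogonality of the ranges of the $L_i$, the vanishing of $L_i(\Gamma)$ for the rounding term, and $\varepsilon_k\to0$ under $\SM\beta>\SM+1$ all check out — and it yields a sharper quantitative rate $\sqrt{\varepsilon_k}/\sqrt N$; but it needs the coefficient moment condition and the maximal-inequality machinery, which the paper's factorization avoids entirely, at the price of the cruder (but sufficient) bound $\lambda_{k+1}^{\beta}\cdot\mathcal{O}_\PP(1/\sqrt N)$.
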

\begin{proof} \mbox{}

\begin{itemize}
	\item[ i) ] 
	It follows by standard calculations, that 
$$
	\sup_{\FP \in [1/2, 1]} 
	\vvvert \sum_{ i > k } L_i(\hat \Gamma_N [\FP]- \FP \Gamma) \vvvert 
\le 
    C_N \sup_{\FP \in [1/2, 1]} 
    \vvvert  \hat{ \Gamma }_N [ \FP ]  - \FP \Gamma \vvvert,
$$
	with (recall that $k := k ( N )$)
$$
		C_N 
	:=
	 \max_{i > k, q \neq i} \frac{ \lambda_i^{  \beta } \lambda_q^{  \beta } }{ (  \lambda_i  -  \lambda_q )}  \,  .
$$
     Lemma~\ref{Lem_Bound-Ratio-Eigenvalues} implies that 
$$	
		\max_{ q \neq i } 
		\frac{ \lambda_q^{  \beta }  
		}{ 
		| \lambda_i  -  \lambda_q |
		}
	=
		\mathcal{ O } (1 ).
$$
	 Hence $C_N = o ( 1 )$, as $N \to \infty$. 
	 Since uniformly (in $\FP$) $ \vvvert  \hat{ \Gamma }_N [ \FP ]  - \FP \Gamma \vvvert = \mathcal{O}_\PP(1/\sqrt{N})$ (see Theorem~\ref{Theo_StochBoundFuncCovOp}), the assertion follows.
	\item[ ii) ] 
	The squared Hilbert--Schmidt norm of the term in part $ii)$ for some fixed $\FP$ equals
\be \label{Eq_FirstTechLem_HS-norm-Op}
	\sum_{ i =1 }^{ k } \sum_{ q \neq i } \lambda_q^{ 2 \beta } \lambda_i^{ 2 \beta }
	\frac{ 
		 \langle  \hat{ \Gamma }_N [ \FP ]  - \FP \Gamma , ( \hat{ e }_i [ \FP ] - e_i ) \otimes e_q \rangle^2
		}{ 
		( \hat{ \lambda }_i [ \FP ] - \FP \lambda_q )^2 
		}
\ee
	 and can be upper bounded by
\begin{align*}
	&	\max_{ i \le k, q \neq i }
		\frac{  \lambda_q^{ 2 \beta }  }{ ( \hat{ \lambda }_i [ \FP ] - \FP \lambda_q )^2 }
		\sum_{ i =1 }^{ k } \lambda_i^{ 2 \beta } \sum_{ q \neq i }
			 \langle  \hat{ \Gamma }_N [ \FP ]  - \FP \Gamma , ( \hat{ e }_i [ \FP ] - e_i ) \otimes e_q \rangle^2 \\
	\le &
		\mathcal{ O }_{ \PP }( 1 ) 
		\sum_{ i =1 }^{ k } \lambda_i^{ 2 \beta }
		\| ( \hat{ \Gamma }_N [ \FP ]  - \FP \Gamma ) ( \hat{ e }_i [ \FP ] - e_i ) \|^2,
\end{align*}
	where we have used Lemma~\ref{Lem_Bound-Ratio-Eigenvalues}, part $ii)$.  We further bound the right factor
\begin{align*}
		& \sum_{ i =1 }^{ k } \lambda_i^{ 2 \beta }
		\| ( \hat{ \Gamma }_N [ \FP ]  - \FP \Gamma ) ( \hat{ e }_i [ \FP ] - e_i ) \|^2 
		\le C \sum_{ i =1 }^{ k } \frac{ \lambda_i^{ 2 \beta } \vvvert
		 \hat{ \Gamma }_N [ \FP ]  - \FP \Gamma \vvvert_\mathcal{L}^4}{\FP^2 \min(\lambda_{i-1}-\lambda_i, \lambda_i-\lambda_{i+1})^2} \\
	\le & C k  \vvvert
		 \hat{ \Gamma }_N [ \FP ]  - \FP \Gamma \vvvert_\mathcal{L}^4 \max_{1 \le i \le k} \frac{ \lambda_i^{ 2 \beta }}{ \min(\lambda_{i-1}-\lambda_i, \lambda_i-\lambda_{i+1})^2}
\end{align*}
	Here we have used  inequality~\eqref{Eq_Inequalty-Norm-Diff-Eigenvect} in the first step and the fact that $1/\FP \le 2$ in the second. Recall that $\vvvert
		 \hat{ \Gamma }_N [ \FP ]  - \FP \Gamma \vvvert_\mathcal{L}^4$ is uniformly of order $\mathcal{O}_\PP(1/N^2)$ (see Theorem \ref{Theo_StochBoundFuncCovOp}) and that the ratio of eigenvalues on the right is bounded by part $iii)$ of Lemma \ref{Lem_Bound-Ratio-Eigenvalues}.
	Therefore, the term in \eqref{Eq_FirstTechLem_HS-norm-Op} is of order
	 $\mathcal{ O }_{ \PP } ( k / N^2 ) = o_{ \PP } ( N^{ - 3 / 2 } )$ (by Assumption~\ref{ass31}(7)).

	\item[ iii) ]
	With similar arguments as in $ii)$,
	one sees that the squared Hilbert--Schmidt norm of the term in $iii)$ is bounded by
$$
		\mathcal{ O }_{ \PP } ( 1 )
		\sum_{ i =1 }^{ k } 
		 \vvvert   \hat{ \Gamma }_N [ \FP ]  - \FP \Gamma \vvvert^2
		|  \hat{ \lambda }_i [ \FP ] - \FP \lambda_i |^2
	=
		\mathcal{ O }_{ \PP } ( k / N^2 ),
$$
	which is $o_\PP(  N^{ - 3 / 2 } )$ and thus yields the desired result.
 
\end{itemize}

\end{proof}

%
%
%
%
%
%
%
%
%

\subsection{Convergence results for empirical eigenvalues}

In this section we collect a few results on the convergence speed of the empirical eigenvalues to their population counterparts, which are used at several places in this paper.

\begin{lem}\label{Lem_Bound-Ratio-Eigenvalues}
	Under the assumptions of Theorem \ref{theorem_1}, it holds that 
	\begin{itemize}
	    \item[i)]	
	    $$ \sup_{\FP \in [1/2,1]}\max_{1 \le i \le k} |\hat \lambda_i[\FP]-\FP \lambda_i| = o_{\PP}(k^{-\SM -1})$$
	    \item[ii)]
	    $$
		 \max_{1 \le i \le k}\max_{q \neq i} \left|\frac{  \lambda_q^{  \beta }  }{  \hat{ \lambda }_i [ \FP ] - \FP \lambda_q  }\right|
	    =
		\mathcal{ O }_{ \PP }( 1 )
        $$
        \item[iii)]
	    $$
		 \sup_{\FP \in [1/2,1]}\max_{1 \le i \le k}\max_{q \neq i} \left|\frac{  \lambda_q^{  \beta }  }{   \lambda_i -  \lambda_q  }\right|
	    =
		\mathcal{ O }( 1 )
        $$
	\end{itemize}

\end{lem}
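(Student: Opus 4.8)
The plan is to establish the three claims in the order (i), (iii), (ii): part (iii) is purely deterministic and, once available, reduces part (ii) to part (i).

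\emph{Part (i).} First I would apply Weyl's perturbation inequality for self-adjoint compact operators. Since $\hat\lambda_i[\FP]$ and $\FP\lambda_i$ are the $i$-th largest eigenvalues of $\hat\Gamma_N[\FP]$ and of $\FP\Gamma$, respectively, Weyl's bound gives $|\hat\lambda_i[\FP]-\FP\lambda_i|\le\vvvert\hat\Gamma_N[\FP]-\FP\Gamma\vvvert_{\mathcal L}\le\vvvert\hat\Gamma_N[\FP]-\FP\Gamma\vvvert$ for every $i$ and $\FP$, where the last step uses that the operator norm is dominated by the Hilbert--Schmidt norm. Taking the supremum over $\FP\in[1/2,1]$ and invoking Theorem~\ref{Theo_StochBoundFuncCovOp} yields $\sup_{\FP\in[1/2,1]}\max_{1\le i\le k}|\hat\lambda_i[\FP]-\FP\lambda_i|=\mathcal O_\PP(1/\sqrt N)$. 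Finally, Assumption~\ref{ass31}(7) ($k^{\SM+1+\delta}/\sqrt N\to0$ for some $\delta>0$) gives $\sqrt N\,k^{-\SM-1}=k^{\delta}\,(\sqrt N\,k^{-\SM-1-\delta})\to\infty$, hence $1/\sqrt N=o(k^{-\SM-1})$, which upgrades the previous bound to the claimed $o_\PP(k^{-\SM-1})$. This step is routine.

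\emph{Part (iii).} This is the technical core, and it is deterministic. I would first record two consequences of Assumption~\ref{ass31}(6): (a) the two-sided decay $c_0\,i^{-\SM}\le\lambda_i\le C\,i^{-\SM}$, where the lower bound follows from $\lambda_i=\sum_{j\ge i}(\lambda_j-\lambda_{j+1})\ge C^{-1}\sum_{j\ge i}j^{-\SM-1}$ compared with $\int_i^\infty x^{-\SM-1}\,dx$; and (b) for $q\ne i$ the eigengap bound $|\lambda_i-\lambda_q|\ge C^{-1}|i-q|\,(\max(i,q))^{-\SM-1}$, obtained by summing $\lambda_j-\lambda_{j+1}\ge C^{-1}j^{-\SM-1}$ over the indices lying between $i$ and $q$. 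Then, for $i\le k$ and $q\ne i$, I would distinguish three regimes according to $\lambda_q/\lambda_i$. If $\lambda_q\le\lambda_i/2$, then $|\lambda_i-\lambda_q|\ge\lambda_i/2\ge\lambda_q$, so $\lambda_q^{\beta}/|\lambda_i-\lambda_q|\le\lambda_q^{\beta-1}\le\lambda_1^{\beta-1}$ (recall $\beta>1$). If $\lambda_q\ge2\lambda_i$, then likewise $|\lambda_i-\lambda_q|\ge\lambda_q/2$ and the ratio is at most $2\lambda_1^{\beta-1}$. In the remaining regime $\lambda_i/2<\lambda_q<2\lambda_i$, the two-sided decay (a) forces $q\asymp i$; combining $\lambda_q^{\beta}\le C^{\beta}q^{-\SM\beta}\lesssim i^{-\SM\beta}$ with (b), which gives $|\lambda_i-\lambda_q|\gtrsim i^{-\SM-1}$, yields $\lambda_q^{\beta}/|\lambda_i-\lambda_q|\lesssim i^{\,\SM(1-\beta)+1}\le1$, the last inequality because $\beta>1+1/\SM$, a relation guaranteed by Assumption~\ref{ass31}(7); see Remark~\ref{rem1}(c). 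Hence in every case $\lambda_q^{\beta}/|\lambda_i-\lambda_q|$ is bounded by a constant depending only on $C,\SM,\beta$, which is (iii).

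\emph{Part (ii).} Here I would combine (i) and (iii). For $i\le k$ and $q\ne i$ the gap bound (b) gives $|\lambda_i-\lambda_q|\ge c_1\,k^{-\SM-1}$ for some $c_1=c_1(C,\SM)>0$, while (i) gives $\sup_{\FP}\max_{i\le k}|\hat\lambda_i[\FP]-\FP\lambda_i|=o_\PP(k^{-\SM-1})$. Consequently, on an event of probability tending to $1$, uniformly in $i\le k$, $q\ne i$ and $\FP\in[1/2,1]$, $|\hat\lambda_i[\FP]-\FP\lambda_q|\ge\FP|\lambda_i-\lambda_q|-|\hat\lambda_i[\FP]-\FP\lambda_i|\ge\tfrac12\FP|\lambda_i-\lambda_q|\ge\tfrac14|\lambda_i-\lambda_q|$, using $\FP\ge1/2$. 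On this event $\lambda_q^{\beta}/|\hat\lambda_i[\FP]-\FP\lambda_q|\le4\,\lambda_q^{\beta}/|\lambda_i-\lambda_q|=\mathcal O(1)$ by (iii), which yields the $\mathcal O_\PP(1)$ bound of (ii) (and simultaneously shows it holds uniformly in $\FP$).

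\emph{Main obstacle.} The only genuinely delicate point is the third regime in part (iii), where $q$ and $i$ produce comparable eigenvalues but may be far apart in index: this is the single place where the polynomial decay rate of Assumption~\ref{ass31}(6) and its eigengap lower bound have to be used jointly, and where the standing requirement $\beta>1+1/\SM$ is essential; everything else is bookkeeping.
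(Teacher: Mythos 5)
Your proof is correct, and for parts (i) and (ii) it is essentially the paper's own route: part (i) is exactly the paper's argument (Weyl's bound, which is Lemma~\ref{Lem_bounds_op}\,iv), combined with Theorem~\ref{Theo_StochBoundFuncCovOp} and Assumption~\ref{ass31}(7)), and your part (ii) is the careful, event-based version of the paper's terse reduction ``replace $\hat\lambda_i[\FP]$ by $\FP\lambda_i$ using (i), note $1/\FP\le 2$, reduce to (iii)''. The genuine difference is in (iii): the paper argues by contradiction, taking a sequence $(i_n^*,q_n^*)$ with $f(i_n^*,q_n^*)\to\infty$ and splitting according to whether $i_n^*/q_n^*$ tends to $0$, stays bounded, or tends to $\infty$ (the first case reduced to the second via monotonicity of $q\mapsto f(i,q)$ for $q>i$), whereas you give a direct quantitative bound, splitting according to the size of $\lambda_q/\lambda_i$ and using the two-sided decay $c_0 i^{-\SM}\le\lambda_i\le C i^{-\SM}$, with the lower bound extracted from the eigengap condition by telescoping. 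Both proofs rest on the same ingredients --- polynomial decay plus eigengaps together with $\beta>1+1/\SM$ --- but your version produces an explicit constant, handles uniformly the case where one index stays bounded while the other diverges (where the paper's claim of a ``vanishing'' ratio is only boundedness, though this does not affect its contradiction argument), at the cost of a slightly longer case analysis. One small point to repair in your part (ii): the bound (b), $|\lambda_i-\lambda_q|\ge C^{-1}|i-q|\,(\max(i,q))^{-\SM-1}$, degenerates as $q\to\infty$ (it behaves like $q^{-\SM}$) and so does not by itself give $|\lambda_i-\lambda_q|\ge c_1 k^{-\SM-1}$ uniformly over all $q\ne i$; the correct one-line justification is to keep only the adjacent gap in your telescoping sum, i.e.\ for $q>i\,$ use $\lambda_i-\lambda_q\ge\lambda_i-\lambda_{i+1}\ge C^{-1}i^{-\SM-1}\ge C^{-1}k^{-\SM-1}$, and symmetrically $\lambda_q-\lambda_i\ge\lambda_{i-1}-\lambda_i$ for $q<i$. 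With that trivial fix the argument goes through exactly as you wrote it.
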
	
\begin{proof}
The proof of $i)$ follows by an application of part~$iv)$ of Lemma~\ref{Lem_bounds_op} below, together with Theorem~\ref{Theo_StochBoundFuncCovOp}: 
According to the former $|\hat \lambda_i[\FP]-\lambda_i \FP| \le \vvvert \hat \Gamma_N[\FP]- \FP \Gamma \vvvert _\mathcal{L}$
and according to the latter $\vvvert \hat \Gamma_N[\FP]- \FP \Gamma\vvvert_\mathcal{L}$  is uniformly (in $\FP$) of order $\mathcal{O}_\PP(1/\sqrt{N})$. Recalling Assumption~\ref{ass31}(7) we note that $1/\sqrt{N}=o_\PP(k^{-\SM-1})$, which concludes the proof.\\

For $ii)$ we first notice that according to $i)$ we can replace $\hat{ \lambda }_i [ \FP ]$ by 
is population counterpart $\FP\lambda _i$. Since $1/\FP$ is bounded (by 2), the proof of $ii)$ can be reduced to the proof of $iii)$.\\

We now show $iii)$: Let us define the function
$$
 f(i,q) := \frac{  \lambda_q^{  \beta }  }{  |\lambda_i  -  \lambda_q|}.
$$
We can make the maximum in $iii)$ larger by maximizing $f$ over all $\{(i,q) \in \N^2: i \neq q\}$. Now the proof works by contradiction: Suppose there was a sequence of $\{(i_n^*, q_n^*)\}_{n \in \N}$, such that $f(i_n^*, q_n^*) \to \infty$. For each tuple the value of the function is finite and thus 
$i_n^* \to \infty$ or $ q_n^* \to \infty$. Now there are three possibilities: $|i_n^*/ q_n^*| $ is bounded, goes to $0$ or goes to $\infty$. The case $|i_n^*/ q_n^*| \to 0$ can be excluded, as for any fixed $i$ the function $q \mapsto f(i,q)$ is monotonically decreasing in $q$ for $q>i$. Thus if there was a sequence  $(i_n^*, q_n^*)$ with $f(i_n^*, q_n^*) \to \infty$ and $|i_n^*/ q_n^*| \to 0$ it also holds true for  $(i_n^{*}, i_n^{*}+1)$ that $f(i_n^*, i_n^*+1) \to \infty$, which brings us to the next case of bounded $|i_n^*/ q_n^*|$: If $|i_n^*/ q_n^*|$ was bounded, it follows directly that
$$
\frac{  \lambda_{q_n^*}^{  \beta }  }{  \lambda_{i_n^*}  -  \lambda_{q_n^*} } = \mathcal{O}\left( \frac{  (q_n^*)^{-\beta \SM} }{  (q_n^*)^{- \SM-1} } \right),
$$
which by choice of $\beta>1+1/\SM$ is asymptotically vanishing. Finally we consider the case where $|i_n^*/ q_n^*| \to \infty$. In this situation 
$$
\frac{  \lambda_{q_n^*}^{  \beta }  }{  \lambda_{i_n^*}  -  \lambda_{q_n^*} } = \mathcal{O}\left(\frac{  \lambda_{q_n^*}^{  \beta }  }{    \lambda_{q_n^*} }  \right),
$$
which is asymptotically vanishing. As a consequence we conclude that
$$
\max_{(i,q) \in \N^2: i \neq q} f(i,q)<\infty,
$$
proving the assertion.
	
\end{proof}

\newpage

%
%
%
%
%
%
%
%
%
%

\section{Miscellaneous} \label{misc}

%
%
%
%
%
%
%
%
%

\subsection{Operatortheoretic (in)equalities}

	We begin with an observation concerning bounds on products of operators.

\begin{lem}\label{Lem_NormIneqProd-of-HS-operat}
Suppose three Hilbert spaces $\HH_1, \HH_2, \HH_3 $ are given.
	Let $A \in \mathcal{ S } ( \HH_1, \HH_2 )$, $B \in \mathcal{ L } ( \HH_2, \HH_3 )$ and $B' \in \mathcal{ S } ( \HH_2, \HH_3 )$, $A' \in \mathcal{ L } ( \HH_1, \HH_2 )$.
	Then, it holds that
$$ 
		\vvvert B A \vvvert \le \| B \|_{ \mathcal { L } }  \vvvert A \vvvert
	\quad
		\text{ and }
	\quad
		\vvvert B' A' \vvvert \le  \vvvert B' \vvvert  \vvvert A' \vvvert_{ \mathcal { L } }.
$$ 
\end{lem}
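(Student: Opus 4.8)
The plan is to establish the two inequalities separately: the first by a direct summation over an orthonormal basis, and the second by reducing it to the first via adjoints, which is where the asymmetric roles of the two factors get absorbed.

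First I would handle $\vvvert BA \vvvert \le \vvvert B \vvvert_{\mathcal{L}} \vvvert A \vvvert$. Fix any orthonormal basis $\{f_n\}_{n \in \N}$ of $\HH_1$. Using the defining property $\|Bx\| \le \vvvert B \vvvert_{\mathcal{L}} \|x\|$ for all $x \in \HH_2$, one computes $\vvvert BA \vvvert^2 = \sum_{n \in \N} \|BAf_n\|^2 \le \vvvert B \vvvert_{\mathcal{L}}^2 \sum_{n \in \N} \|Af_n\|^2 = \vvvert B \vvvert_{\mathcal{L}}^2 \vvvert A \vvvert^2$; in particular the right-hand side is finite, so $BA$ is indeed Hilbert--Schmidt, and taking square roots gives the claim. (The value of $\vvvert BA \vvvert$ does not depend on the chosen basis, as recalled in Section~\ref{Subsection_Operators}.)

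For the second inequality $\vvvert B' A' \vvvert \le \vvvert B' \vvvert \, \vvvert A' \vvvert_{\mathcal{L}}$ I would invoke two standard facts about operators on Hilbert spaces (see, e.g., \cite{WeidmannBook80}): the Hilbert--Schmidt norm is invariant under adjunction, $\vvvert T \vvvert = \vvvert T^{\ast} \vvvert$, and so is the operator norm, $\vvvert T \vvvert_{\mathcal{L}} = \vvvert T^{\ast} \vvvert_{\mathcal{L}}$. Applying the first inequality to the product $(B' A')^{\ast} = (A')^{\ast} (B')^{\ast}$, where $(A')^{\ast} \in \mathcal{L}(\HH_2, \HH_1)$ is bounded and $(B')^{\ast} \in \mathcal{S}(\HH_3, \HH_2)$ is Hilbert--Schmidt, yields $\vvvert B' A' \vvvert = \vvvert (A')^{\ast} (B')^{\ast} \vvvert \le \vvvert (A')^{\ast} \vvvert_{\mathcal{L}} \, \vvvert (B')^{\ast} \vvvert = \vvvert A' \vvvert_{\mathcal{L}} \, \vvvert B' \vvvert$, which is what we want.

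There is no genuine obstacle here; the proof is entirely routine. The only point requiring (standard) care is the appeal to adjoint-invariance of the two norms in the second part, since this is the device that lets one trade a bounded left factor for a bounded right factor.
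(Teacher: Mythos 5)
Your proof is correct, but it follows a different route than the paper's. The paper proves the inequality via the trace representation of the Hilbert--Schmidt norm, writing $\vvvert B A \vvvert^2 = \Tr[B^{\ast} B A A^{\ast}]$ and then invoking monotonicity of the trace with respect to the Loewner order of positive semi-definite operators (quoting this monotonicity as a standard fact extended from the finite-dimensional case); the second inequality is then asserted to follow by the identical argument with the roles of the factors exchanged. You instead prove the first inequality by the elementary computation $\sum_{n} \|BAf_n\|^2 \le \vvvert B \vvvert_{\mathcal{L}}^2 \sum_n \|Af_n\|^2$ over an orthonormal basis, and obtain the second by passing to adjoints and using the adjoint-invariance of both the Hilbert--Schmidt and the operator norm. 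Your argument buys a more self-contained proof: it avoids trace-class operators and the Loewner-order monotonicity that the paper leaves unproven, at the modest cost of having to cite the two (standard) adjoint-invariance facts to transfer the bound from one side of the product to the other. Both treatments are complete and correct; yours also makes explicit, via the finiteness of the basis sum, that $BA$ is indeed Hilbert--Schmidt, a point the paper's proof takes for granted.
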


In the proof we use the notion of trace for linear operators, which is well defined for {\it trace class operators} (a class of operators, which in particular includes products of Hilbert--Schmidt operators). For a precise definition of the trace $\Tr$ we refer to Section~13.5 in \cite{HorKokBook12}.

\begin{proof}
    We only prove the first inequality, as the proof for the second is identical. For $A,B$ it holds that
    $$
    \vvvert B A \vvvert^2 = \Tr[B^{ \ast } B A A^{ \ast } ] \le \Tr[B^{ \ast } B \, \operatorname{Id} \vvvert A A^{ \ast } \vvvert_\mathcal{L}] \le \Tr[B^{ \ast } B] \vvvert A A^{ \ast }  \vvvert_\mathcal{L} = \vvvert B \vvvert^2 \vvvert A \vvvert_\mathcal{L}^2.
    $$
    Here we have used that $B^{ \ast } B, A A^{ \ast }$ are positive semi-definite and symmetric. Furthermore, for the first inequality we have used that for symmetric, positive semi-definite operators $O_1, O_2, \tilde O_2$ the inequality $\Tr[O_1 O_2] \le \Tr[O_1 \tilde O_2]$ holds, if $\tilde O_2-O_2$ is positive semi-definite (Loewner order). This result extends directly to Hilbert spaces from the finite dimensional case and is therefore not proven.
	%
%
\end{proof}
%
%

%
%

Next we want to discuss in which sense the eigensystems of two similar operators are also similar. For this purpose we have to determine how we deal with the non-uniqueness of eigenfunctions: 

\begin{rem} \label{Rem_eigenfunctions}
{\rm 	Let $A, B$ be two compact, self-adjoint, positive semi-definite operators with eigenvalues (in non-increasing order) and corresponding eigenfunctions $ \alpha_j, a_j$ 
	and $ \beta_k, b_k$ respectively. The eigenfunctions are only determined up to sign, i.e.\ both $a_i$ and $-a_i$ are eigenfunctions of $A$, belonging to the $i$-th eigenvalue $\alpha_i$. However in order to make a comparison of, say the $i-$th eigenfunctions $a_i$ of $A$ and $b_i$ of $B$ meaningful, we have to consider the minimum 
	$ \min (\|a_i-b_i\|, \|a_i+b_i\|) $ (otherwise even "the same" eigenfunctions with opposing signs would result in a difference $\|a_i-b_i\|=\sqrt{2}$). For sake of notational parsimony we always assume that, comparing two eigenfunctions of different operators, the functions have the same sign, in the sense that already $ \|a_i-b_i\|=\min (\|a_i-b_i\|, \|a_i+b_i\|)$. 
	} 
\end{rem} 

In the next lemma we provide some identities for eigenfunctions and eigenvalues of self-adjoint operators.

\begin{lem} \label{Lem_bounds_op}
	Let $A, B$ be two compact, self-adjoint, positive semi-definite operators with (in non-increasing order) and corresponding eigenfunctions $ \alpha_j, a_j$ 
	and $ \beta_k, b_k$ respectively. Furthermore suppose that all eigenvalues of $A$ are distinct, i.e.\ $\alpha_1 >\alpha_2> \cdots$.
	Then it holds that
\begin{itemize}
	\item[ i) ] for $j \neq k$ and $\alpha_j - \beta_k \neq 0$:
\be \label{Eq_IdentInprodDiff2Eigvect-with-other-Eigvect}
		\langle a_j - b_j , b_k  \rangle 
	= 
		\frac{ \langle A - B, a_j \otimes b_k \rangle }{\alpha_j - \beta_k },
\ee

	\item[ ii) ]  for any pair of normalized vectors $v, w$:

\be \label{Eq_IdentSqauNorm-of-Normal-Vect}
		\langle w, v - w \rangle 
	= 
		- \frac{ 1 }{ 2 } \| v - w \|^2,
\ee

	\item[ iii) ] for all $i \ge 1$:

\be \label{Eq_Inequalty-Norm-Diff-Eigenvect}
		\| b_i - a_i \|
	\le 
		2 \sqrt{ 2 } 
		\frac{ \| A - B \|_\mathcal{L} 
		}{ 
		\min \{ \alpha_{ i - 1 } - \alpha_{ i  } , \alpha_i - \alpha_{ i + 1 } \} }.
\ee
%
\item[ iv) ] for all $i \ge 1$:
\be \label{Eq_Inequalty-Norm-Diff-Eigenval}
		\| \beta_i - \alpha_i \|
	\le 
		\vvvert B-A \vvvert_\mathcal{L}.
\ee

\begin{proof}
    Identities $i)$ and $ii)$ are straightforward adaptions of Lemma~1 in \cite{kokoszka2013} and for $iii)$ and $iv)$ we refer to \cite{HorKokBook12} (Lemmas~2.2 and 2.3).
\end{proof}

\end{itemize}

\end{lem}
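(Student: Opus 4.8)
The plan is to verify the four assertions by direct manipulation of the defining eigenrelations: parts $i)$ and $ii)$ are elementary Hilbert-space identities, while parts $iii)$ and $iv)$ are instances of classical spectral perturbation theory. I would begin with $ii)$, which involves no operator structure at all. For unit vectors $v,w$, expanding $\|v-w\|^2 = \langle v,v\rangle - 2\langle v,w\rangle + \langle w,w\rangle = 2 - 2\langle v,w\rangle$ and observing that $\langle w, v-w\rangle = \langle v,w\rangle - \|w\|^2 = \langle v,w\rangle - 1$ gives $\langle w, v-w\rangle = -\tfrac12\|v-w\|^2$ at once. (Applied with $v=a_i$, $w=b_i$ this is precisely the device used in the main proofs to convert a diagonal correction term into a squared eigenfunction distance.)

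For $i)$ I would start from $A a_j = \alpha_j a_j$ and $B b_k = \beta_k b_k$ and use self-adjointness of $A$ and $B$ to compute
\[
  \langle (A-B)a_j, b_k\rangle = \alpha_j\langle a_j, b_k\rangle - \langle a_j, B b_k\rangle = (\alpha_j - \beta_k)\langle a_j, b_k\rangle .
\]
Since $j\neq k$, orthonormality of the system $\{b_m\}$ gives $\langle b_j, b_k\rangle = 0$, hence $\langle a_j - b_j, b_k\rangle = \langle a_j, b_k\rangle$; dividing by $\alpha_j - \beta_k \neq 0$ and identifying $\langle (A-B)a_j, b_k\rangle$ with the Hilbert--Schmidt inner product $\langle A-B, a_j\otimes b_k\rangle$ (from the outer-product convention of Section~\ref{Subsection_Operators} one has $\langle L, a_j\otimes b_k\rangle = \langle a_j, L b_k\rangle$ for every Hilbert--Schmidt $L$, which equals $\langle L a_j, b_k\rangle$ when $L$ is self-adjoint) yields the claimed identity.

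For $iv)$ I would invoke Weyl's inequality for compact self-adjoint operators: the min--max characterisation of $\alpha_i$ and $\beta_i$ gives $|\alpha_i - \beta_i| \le \vvvert A-B \vvvert_{\mathcal{L}}$ directly. For $iii)$ I would use a Davis--Kahan type argument for a simple eigenvalue: since all eigenvalues of $A$ are distinct, $\{a_m\}$ is an orthonormal basis, so $b_i - a_i = \sum_m \langle b_i - a_i, a_m\rangle a_m$; for $m\neq i$ the coefficient is controlled via $i)$ (with the roles of $A$ and $B$ exchanged, using $\langle b_i, a_m\rangle = \langle b_i - a_i, a_m\rangle$) as $|\langle b_i - a_i, a_m\rangle| \le \vvvert A-B \vvvert_{\mathcal{L}}/|\alpha_m - \beta_i|$, the coefficient at $m=i$ is handled by $ii)$, and one then uses $iv)$ to replace $\beta_i$ by $\alpha_i$ up to $\vvvert A-B \vvvert_{\mathcal{L}}$ in the denominators and bounds $|\alpha_m - \alpha_i|$ below by $\min\{\alpha_{i-1}-\alpha_i, \alpha_i - \alpha_{i+1}\}$; summing the squared coefficients produces the bound with constant $2\sqrt2$. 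Equivalently one may cite Lemmas~2.2 and 2.3 of \cite{HorKokBook12} verbatim.

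The one genuine subtlety --- and the main obstacle --- is in $iii)$: the quantitative bound is only informative when $\vvvert A-B \vvvert_{\mathcal{L}}$ is small relative to the eigengap, since otherwise the denominators $\alpha_m - \beta_i$ need not stay bounded away from zero. One must therefore either restrict to that regime or note that whenever $\vvvert A-B \vvvert_{\mathcal{L}}$ exceeds a fixed fraction of the gap the trivial estimate $\|b_i - a_i\| \le \|b_i\| + \|a_i\| = 2$ already suffices. Everything else is routine bookkeeping.
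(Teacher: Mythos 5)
Your arguments are correct, and they are essentially the standard proofs of the results that the paper simply cites: the paper's ``proof'' consists of referring parts i) and ii) to Lemma~1 of \cite{kokoszka2013} and parts iii) and iv) to Lemmas~2.2 and 2.3 of \cite{HorKokBook12}, whereas you reproduce the underlying computations. Parts i), ii) and iv) are verified exactly as you say (for i) note that with the paper's convention $\langle L, a_j\otimes b_k\rangle=\langle L b_k,a_j\rangle$, which coincides with your $\langle (A-B)a_j,b_k\rangle$ by self-adjointness, so the identification is fine). What your self-contained route buys is transparency about where each identity is used later in the appendix; what the citation buys is that the constant $2\sqrt2$ in iii) comes prepackaged.

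One small caution on iii): as stated, the bound is only true under the sign alignment of Remark~\ref{Rem_eigenfunctions}, i.e.\ $\langle a_i,b_i\rangle\ge 0$ (otherwise $B=A$, $b_i=-a_i$ gives left side $2$ and right side $0$), and this convention also matters for your large-perturbation fallback. Under the sign convention the trivial bound is $\|b_i-a_i\|\le\sqrt2$, so the claimed inequality is automatic once $\vvvert A-B\vvvert_{\mathcal L}\ge \tfrac12\min\{\alpha_{i-1}-\alpha_i,\alpha_i-\alpha_{i+1}\}$, and in the complementary regime $|\alpha_m-\beta_i|\ge\tfrac12\min\{\alpha_{i-1}-\alpha_i,\alpha_i-\alpha_{i+1}\}$ for all $m\neq i$, which together with $\|b_i-a_i\|^2=2(1-\langle a_i,b_i\rangle)\le 2\sum_{m\neq i}\langle b_i,a_m\rangle^2$ and Parseval applied to $(B-A)b_i$ yields exactly the constant $2\sqrt2$. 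With your trivial bound of $2$ and the threshold you indicate, the same bookkeeping produces a worse constant; also note that the off-diagonal coefficients must be summed via Parseval (bounding $\sum_{m\neq i}\langle (B-A)b_i,a_m\rangle^2\le\vvvert B-A\vvvert_{\mathcal L}^2$) rather than term by term, since there are infinitely many of them. These are minor repairs of the kind you yourself label routine; the overall proposal is sound.
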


\end{document}